\newtheorem{thm}{Theorem}[section]
\newtheorem{cor}[thm]{Corollary}
\newtheorem{lem}[thm]{Lemma}
\newtheorem{prop}[thm]{Proposition}
\theoremstyle{definition}
\theoremstyle{remark}
\newtheorem{rem}[thm]{Remark}
\numberwithin{equation}{section}
\begin{document}

\newcommand{\thmref}[1]{Theorem~\ref{#1}}
\newcommand{\secref}[1]{Section~\ref{#1}}
\newcommand{\lemref}[1]{Lemma~\ref{#1}}
\newcommand{\propref}[1]{Proposition~\ref{#1}}
\newcommand{\corref}[1]{Corollary~\ref{#1}}
\newcommand{\remref}[1]{Remark~\ref{#1}}
\newcommand{\eqnref}[1]{(\ref{#1})}
\newcommand{\exref}[1]{Example~\ref{#1}}

\newcommand{\nc}{\newcommand}
\nc{\Z}{{\mathbb Z}}
\nc{\C}{{\mathbb C}}
\nc{\N}{{\mathbb N}}
\nc{\F}{{\mf F}}
\nc{\Q}{\ol{Q}}
\nc{\la}{\lambda}
\nc{\ep}{\epsilon}
\nc{\h}{\mathfrak h}
\nc{\n}{\mf n}
\nc{\G}{{\mathfrak g}}
\nc{\ga}{{\gamma}}
\nc{\ella}{{\ell_{\lambda}}}
\nc{\DG}{\widetilde{\mathfrak g}}
\nc{\SG}{\overline{\mathfrak g}}
\nc{\D}{\mc D} \nc{\Li}{{\mc L}} \nc{\La}{\Lambda} \nc{\is}{{\mathbf
i}} \nc{\V}{\mf V} \nc{\bi}{\bibitem} \nc{\NS}{\mf N}
\nc{\dt}{\mathord{\hbox{${\frac{d}{d t}}$}}} \nc{\E}{\mc E}
\nc{\ba}{\tilde{\pa}} \nc{\half}{\frac{1}{2}} \nc{\mc}{\mathcal}
\nc{\mf}{\mathfrak} \nc{\hf}{\frac{1}{2}}
\nc{\hgl}{\widehat{\mathfrak{gl}}} \nc{\gl}{{\mathfrak{gl}}}
\nc{\hz}{\hf+\Z}
\nc{\dinfty}{{\infty\vert\infty}} \nc{\SLa}{\overline{\Lambda}}
\nc{\SF}{\overline{\mathfrak F}} \nc{\SP}{\overline{\mathcal P}}
\nc{\U}{\mathfrak u} \nc{\SU}{\overline{\mathfrak u}}
\nc{\ov}{\overline}
\nc{\wt}{\widetilde}
\nc{\osp}{\mf{osp}}
\nc{\spo}{\mf{spo}}
\nc{\hosp}{\widehat{\mf{osp}}}
\nc{\hspo}{\widehat{\mf{spo}}}
\nc{\I}{\mathbb{I}}
\nc{\X}{\mathbb{X}}
\nc{\Y}{\mathbb{Y}}
\nc{\hh}{\widehat{\mf{h}}}
\nc{\cc}{{\mathfrak c}}
\nc{\dd}{{\mathfrak d}}
\nc{\aaa}{{\mf A}}
\nc{\xx}{{\mf x}}
\nc{\wty}{\widetilde{\mathbb Y}}
\nc{\ovy}{\overline{\mathbb Y}}
\nc{\vep}{\bar{\epsilon}}
\nc{\vars}{{\phi}}
\nc{\bvars}{\underline{{\phi}}}
\nc{\bcdot}{{\boldsymbol{\cdot}}}
\nc{\Hom}{{\rm Hom}}
\nc{\Ext}{{\rm Ext}}
\nc{\ext}{{\rm ext}}
\nc{\Ninf}{{\N\cup\{\infty\}}}
\nc{\mcP}{{\mc{P}}}
\nc{\mcPne}{{\mc{P}^-}}
\nc{\Lalan}{{\Lambda^\la_n}}
\nc{\Lalanne}{{\Lambda^{\la,-}_n}}
\nc{\Opn}{{\mc{O}'_{n,\phi}}}
\nc{\Opk}{{\mc{O}'_{k,\phi}}}
\nc{\Lpn}{\Lambda'_{n,\phi}}
\nc{\Lpk}{\Lambda'_{k,\phi}}
\nc{\trp}{{\mf{tr}'}^n_k}

\advance\headheight by 2pt

\title[Projective modules in the parabolic category]
{Projective modules over classical Lie algebras of infinite rank in the parabolic category}

\author[Chen]{Chih-Whi Chen}
\address{Department of Mathematics, Uppsala University, Box 480,
	SE-75106, Uppsala, SWEDEN; Present address: School of Mathematical Sciences, Xiamen University, 
	Xiamen 361005, CHINA;
} \email{chihwhichen@xmu.edu.cn}

\author[Lam]{Ngau Lam}
\address{Department of Mathematics, National Cheng Kung University, Tainan, Taiwan 70101}
\email{nlam@mail.ncku.edu.tw}

\begin{abstract} We study the truncation functors and show the existence of projective cover with a finite Verma flag of each irreducible module in parabolic BGG category $\mc O$ over infinite rank Lie algebra of types $\mf{a,b,c,d}$. Moreover, $\mc O$ is a Koszul category. As a consequence, the corresponding parabolic BGG category $\overline{\mc O}$ over infinite rank Lie superalgebra of types $\mf{a,b,c,d}$ through the super duality is also a Koszul category.
\end{abstract}

 \maketitle

  \setcounter{tocdepth}{1}

\section{Introduction} The theory of Koszul algebra has powerful applications appearing in many areas of mathematics. In particular, the Koszul theory in representation theory has recently aroused some considerable interests.  In their celebrated paper \cite{BGS96}, Beilinson, Ginzburg and Soergel established the remarkable parabolic-singular duality by using the theory of Koszul algebras \cite{BGS96}. The Koszul theory for BGG category $O$ for complex semisimple Lie algebras has further extensively studied , see, e.g., \cite{Ba99}, \cite{BS3}, \cite{Ma07}, \cite{Ma10}, \cite{CM1}, \cite{CM2}, \cite{CM3}, \cite{MOS09}, \cite{SVV}.

 Along the line of the proof for Brundan's conjecture \cite{Br03}, the super duality conjecture was formulated and established by Cheng, Wang, Zhang and the second author (cf. \cite{CWZ08}, \cite{CW08}, \cite{CL10}, \cite{CLW11} and \cite{CLW12}). The super duality provides an equivalence of categories between the parabolic BGG categories $\overline{\mc{O}}$ for Lie superalgebras  and $\mc{O}$ for Lie algebras of infinite rank of types $\mf{a,b,c,d}$. The super duality plays a crucial role  in solving Brundan's conjecture in \cite{CLW15} and establishing the irreducible character of Lie superalgebra of type $B$ in \cite{BW} and type $D$ in \cite{B}. Thus the parabolic BGG categories $\mc O$ over infinite rank Lie algebras of types $\mf{a,b,c,d}$ has been paid attention. There is another proof for Brundan's conjecture given by Brundan, Losev and Webster \cite{BLW}.

 The Koszulity of categories of finite-dimensional modules over Lie superalgebras of  type $A$ had been proved by Brundan and Stroppel, see \cite{BS2, BS4}.  Recently, Brundan, Losev and Webster \cite{BLW} established the Koszulity for BGG category $O$ for Lie superalgebra of type $A$ in its full generality based on the uniqueness of tensor product categorification. However, the problems of Koszulity for the BGG categories $O$ of Lie superalgebras beyond type $A$ are still unclear.

The main result in this article shows the Koszulity holds for the parabolic BGG categories $\mc O$ for infinite rank Lie algebras  of types $\mf{a,b,c,d}$.  The existence of projective cover of each irreducible module in $\mc O$ is proved by knowing the existence of projective covers for finite rank cases and the equivalence of categories consisting of modules equipped with finite Verma flags over Lie algebras of various ranks. The idea for showing the Koszulity for the $\mc O$ is described as follows. By knowing that the endomorphism algebra $R_{n,\la}$ of the minimal projective generator of each block of $\mc O_n$ over Lie algebra of finite rank is Koszul \cite{Ba99}, we show that $R_{k,\la}$ with the Koszul grading can be regarded as a graded subalgebra of $R_{n,\la}$ for $k<n$ and then their direct limit is also Koszul. We also show that $\mc O$ has a Kazhdan-Lusztig theory in the sense of \cite{CPS92}. As a consequence, the corresponding parabolic BGG categories $\overline{\mc O}$ over infinite rank Lie superalgebras of types $\mf{a,b,c,d}$ through the super duality is also a Koszul category.

 The paper is organized as follows. In Section \ref{sec::Liealgebras}, the infinite rank Lie algebras $\mf g^{\xx}$ of types $\xx = \mf{a,b,c}, \mf d$ and their categories, and the properties of truncation functors, are recalled.
In Section \ref{Sect::3}, the equivalence of categories consisting of modules equipped with finite Verma flags for Lie algebras of various ranks is established. The existence of projective cover of each irreducible module in $\mc O$ is proved in this section.  We also show that $\mc O$ has a Kazhdan-Lusztig theory.

 The Section \ref{sect::EmbedingofRings} is devoted to the proof of the Koszulity for $\mc O$.  The relations between endomorphism algebras $R_{n,\la}$ of minimal projective generators of blocks of $\mc O_n$  at various ranks are studied and we show that $R_{k,\la}$ with the Koszul grading can be regarded as a graded subalgebra of $R_{n,\la}$ for $k<n$. The relations of the extension algebra of the direct sum of irreducibles at various ranks are also studied in Section \ref{sect::EmbedingofRings}. The parabolic BGG categories $\overline{\mc O}$ over infinite rank Lie superalgebras of types $\mf{a,b,c,d}$ are Koszul categories obtained in Section \ref{subsection::SO}. In Section \ref{Set::final}, we establish the Koszulity for the dual category along the same line without proof.

\vskip 0.5cm
\noindent{\bf Notations.} Throughout the paper the symbols $\Z$, $\N$, and $\Z_+$ stand for the sets of all,
positive and non-negative integers, respectively. All vector spaces, algebras, tensor
products, et cetera, are over the field of complex numbers $\C$.

\vskip 0.5cm
\noindent{\bf Notes added.} Before the present paper is completed,  some main results of type A in our paper were obtained by Christopher Leonard in \cite{Le}. Also, some main results in our paper have been obtained by Kevin Coulembier and Ivan Penkov.

\vskip 0.5cm
\noindent{\bf Acknowledgment.} The authors thank Kevin Coulembier for valuable discussions. The first author is supported by Vergstiftelsen.
The second author was partially supported by MoST grant 104-2115-M-006 -015 -MY3 of Taiwan.
\bigskip

\section{Lie algebras of infinite and finite ranks}\label{sec::Liealgebras}
 In this section, we first recall the infinite rank Lie algebras $\G^\xx$ and the parabolic BGG categories $\mc {O}$ of $\G^\xx$-modules, where $\xx$ denotes one of the four types $\mf{a,b,c,d}$. Then we recall the truncation functors which relate to the parabolic categories for $\G^\xx$ and finite-dimensional Lie algebras. We refer the reader to \cite[Section 2 and 3]{CL10} for type $\mf{a}$ and \cite[Section 2 and 3]{CLW11} for types $\mf{b,c,d}$ for their details (see also \cite[Section 6.1 and 6.2]{CW12}). We fix $m\in\Z_+$ in this article.

\subsection{Lie algebras}
Consider the free abelian group with basis $\{\epsilon_{-m},\ldots,\epsilon_{-1}\}\cup\{\epsilon_{r}\vert
r\in\N\}$, with a symmetric bilinear form $(\cdot|\cdot)$
given by
\begin{align*}
(\epsilon_r|\epsilon_s)=\delta_{rs}, \qquad r,s \in
\{-m,\ldots,-1\} \cup \N.
\end{align*}
We set
\begin{align}\label{alpha:beta}
&
 \quad\alpha_{j}
:=\epsilon_{j}-\epsilon_{j+1},\quad & -m\le j\le -2,\\
&\beta_{\times}:=\epsilon_{-1}-\epsilon_{1},\quad
\beta_{r}
:=\epsilon_{r}-\epsilon_{r+1},\quad & r\in\N.\nonumber
\end{align}

For $\xx =\mf{a,b,c,d}$, we denote by $\mf{k}^\xx$ the contragredient Lie algebras
whose Dynkin diagrams \makebox(23,0){$\oval(20,12)$}\makebox(-20,8){$\mf{k}^\xx$} together with certain
distinguished sets of simple roots $\Pi(\mf{k^x})$ listed as follows ($m\ge 2$): \vspace{.3cm}

\begin{center}
\hskip -3cm \setlength{\unitlength}{0.16in}
\begin{picture}(24,2)
\put(8,2){\makebox(0,0)[c]{$\bigcirc$}}
\put(10.4,2){\makebox(0,0)[c]{$\bigcirc$}}
\put(14.85,2){\makebox(0,0)[c]{$\bigcirc$}}
\put(17.25,2){\makebox(0,0)[c]{$\bigcirc$}}
\put(19.4,2){\makebox(0,0)[c]{$\bigcirc$}}
\put(5.6,2){\makebox(0,0)[c]{$\bigcirc$}}
\put(8.4,2){\line(1,0){1.55}} \put(10.82,2){\line(1,0){0.8}}
\put(13.2,2){\line(1,0){1.2}} \put(15.28,2){\line(1,0){1.45}}
\put(6,2){\line(1,0){1.25}}
\put(17.7,2){\line(1,0){1.25}}
\put(12.5,1.95){\makebox(0,0)[c]{$\cdots$}}
\put(-.5,2){\makebox(0,0)[c]{$\mf{a}$:}}
\put(5.5,1){\makebox(0,0)[c]{\tiny$\alpha_{-m}$}}
\put(8,1){\makebox(0,0)[c]{\tiny$\alpha_{-m+1}$}}
\put(17.2,1){\makebox(0,0)[c]{\tiny$\alpha_{-3}$}}
\put(19.3,1){\makebox(0,0)[c]{\tiny$\alpha_{-2}$}}
\end{picture}
\end{center}
\begin{center}
\hskip -3cm \setlength{\unitlength}{0.16in}
\begin{picture}(24,2)
\put(8,2){\makebox(0,0)[c]{$\bigcirc$}}
\put(10.4,2){\makebox(0,0)[c]{$\bigcirc$}}
\put(14.85,2){\makebox(0,0)[c]{$\bigcirc$}}
\put(17.25,2){\makebox(0,0)[c]{$\bigcirc$}}
\put(19.4,2){\makebox(0,0)[c]{$\bigcirc$}}
\put(5.6,2){\makebox(0,0)[c]{$\bigcirc$}}
\put(8.4,2){\line(1,0){1.55}} \put(10.82,2){\line(1,0){0.8}}
\put(13.2,2){\line(1,0){1.2}} \put(15.28,2){\line(1,0){1.45}}
\put(17.7,2){\line(1,0){1.25}}
\put(6,1.8){$\Longleftarrow$}
\put(12.5,1.95){\makebox(0,0)[c]{$\cdots$}}
\put(-.5,2){\makebox(0,0)[c]{$\mf{b}$:}}
\put(5.5,1){\makebox(0,0)[c]{\tiny$-\epsilon_{-m}$}}
\put(8,1){\makebox(0,0)[c]{\tiny$\alpha_{-m}$}}
\put(17.2,1){\makebox(0,0)[c]{\tiny$\alpha_{-3}$}}
\put(19.3,1){\makebox(0,0)[c]{\tiny$\alpha_{-2}$}}
\end{picture}
\end{center}
\begin{center}
\hskip -3cm \setlength{\unitlength}{0.16in}
\begin{picture}(24,2)
\put(5.7,2){\makebox(0,0)[c]{$\bigcirc$}}
\put(8,2){\makebox(0,0)[c]{$\bigcirc$}}
\put(10.4,2){\makebox(0,0)[c]{$\bigcirc$}}
\put(14.85,2){\makebox(0,0)[c]{$\bigcirc$}}
\put(17.25,2){\makebox(0,0)[c]{$\bigcirc$}}
\put(19.4,2){\makebox(0,0)[c]{$\bigcirc$}}
\put(6.8,2){\makebox(0,0)[c]{$\Longrightarrow$}}
\put(8.4,2){\line(1,0){1.55}} \put(10.82,2){\line(1,0){0.8}}
\put(13.2,2){\line(1,0){1.2}} \put(15.28,2){\line(1,0){1.45}}
\put(17.7,2){\line(1,0){1.25}}
\put(12.5,1.95){\makebox(0,0)[c]{$\cdots$}}
\put(-.5,2){\makebox(0,0)[c]{$\mf{c}$:}}
\put(5.5,1){\makebox(0,0)[c]{\tiny$-2\epsilon_{-m}$}}
\put(8,1){\makebox(0,0)[c]{\tiny$\alpha_{-m}$}}
\put(17.2,1){\makebox(0,0)[c]{\tiny$\alpha_{-3}$}}
\put(19.3,1){\makebox(0,0)[c]{\tiny$\alpha_{-2}$}}
\end{picture}
\end{center}
\begin{center}
\hskip -3cm \setlength{\unitlength}{0.16in}
\begin{picture}(24,3.5)
\put(8,2){\makebox(0,0)[c]{$\bigcirc$}}
\put(10.4,2){\makebox(0,0)[c]{$\bigcirc$}}
\put(14.85,2){\makebox(0,0)[c]{$\bigcirc$}}
\put(17.25,2){\makebox(0,0)[c]{$\bigcirc$}}
\put(19.4,2){\makebox(0,0)[c]{$\bigcirc$}}
\put(6,3.8){\makebox(0,0)[c]{$\bigcirc$}}
\put(6,.3){\makebox(0,0)[c]{$\bigcirc$}}
\put(8.4,2){\line(1,0){1.55}} \put(10.82,2){\line(1,0){0.8}}
\put(13.2,2){\line(1,0){1.2}} \put(15.28,2){\line(1,0){1.45}}
\put(17.7,2){\line(1,0){1.25}}
\put(7.6,2.2){\line(-1,1){1.3}}
\put(7.6,1.8){\line(-1,-1){1.3}}
\put(12.5,1.95){\makebox(0,0)[c]{$\cdots$}}
\put(-.5,2){\makebox(0,0)[c]{$\mf{d}$:}}
\put(3.3,0.3){\makebox(0,0)[c]{\tiny${-}\epsilon_{-m}{-}\epsilon_{-m+1}$}}
\put(4.7,3.8){\makebox(0,0)[c]{\tiny$\alpha_{-m}$}}
\put(8.2,1){\makebox(0,0)[c]{\tiny$\alpha_{-m+1}$}}
\put(17.2,1){\makebox(0,0)[c]{\tiny$\alpha_{-3}$}}
\put(19.3,1){\makebox(0,0)[c]{\tiny$\alpha_{-2}$}}
\end{picture}
\end{center}

We have the following identification of Lie algebras: $\mf{k}^\mf{a}= \mf{gl}(m)$, $\mf{k}^\mf{b}=\mf{so}(2m+1)$, $\mf{k}^\mf{c}=\mf{sp}(2m)$ for $m\ge 1$ and $\mf{k}^\mf{d} = \mf{so}(2m)$ for $m \ge 2$, respectively.

For $n\in\N$, let
\makebox(23,0){$\oval(20,12)$}\makebox(-20,8){$\mf{T}_n$}
denote the following Dynkin diagram:
\begin{center}
\hskip -3cm \setlength{\unitlength}{0.16in}
\begin{picture}(24,3)
\put(8,2){\makebox(0,0)[c]{$\bigcirc$}}
\put(10.4,2){\makebox(0,0)[c]{$\bigcirc$}}
\put(14.85,2){\makebox(0,0)[c]{$\bigcirc$}}
\put(17.25,2){\makebox(0,0)[c]{$\bigcirc$}}
\put(5.6,2){\makebox(0,0)[c]{$\bigcirc$}}
\put(8.4,2){\line(1,0){1.55}} \put(10.82,2){\line(1,0){0.8}}
\put(13.2,2){\line(1,0){1.2}} \put(15.28,2){\line(1,0){1.45}}
\put(6,2){\line(1,0){1.4}}
\put(12.5,1.95){\makebox(0,0)[c]{$\cdots$}}
\put(0,1.2){{\ovalBox(1.6,1.2){$\mf{T}_n$}}}
\put(5.5,1){\makebox(0,0)[c]{\tiny$\beta_{\times}$}}
\put(8,1){\makebox(0,0)[c]{\tiny$\beta_{1}$}}
\put(10.3,1){\makebox(0,0)[c]{\tiny$\beta_{2}$}}
\put(15,1){\makebox(0,0)[c]{\tiny$\beta_{n-2}$}}
\put(17.2,1){\makebox(0,0)[c]{\tiny$\beta_{n-1}$}}
\end{picture}
\end{center}

The Lie algebras associated with these Dynkin diagrams are
$\gl(n+1)$. In the limit $n\to\infty$, the associated algebra are direct limits of
these Lie algebras.

Any of the {\em head} diagrams
\makebox(23,0){$\oval(20,11)$}\makebox(-20,8){$\mf{k}^\xx$} may be
connected with the {\em tail} diagram
\makebox(23,0){$\oval(20,12)$}\makebox(-20,8){$\mf{T}_n$} to
produce the following Dynkin diagram ($n\in\N\cup\{\infty\}$), denoted by \makebox(23,0){$\oval(20,13)$}\makebox(-20,8){$\mf{\G}_n^\xx$}:
\begin{equation}\label{Dynkin:combined}
\hskip -3cm \setlength{\unitlength}{0.16in}
\begin{picture}(24,1)
\put(15,0.5){\makebox(0,0)[c]{{\ovalBox(1.6,1.2){$\mf{k}^\xx$}}}}
\put(15.8,0.5){\line(1,0){1.85}}
\put(18.5,0.5){\makebox(0,0)[c]{{\ovalBox(1.6,1.2){${\mf{T}}_n$}}}}
\end{picture}
\end{equation}
We will use the same notation
\makebox(23,0){$\oval(20,13)$}\makebox(-20,8){$\mf{\G}_n^\xx$} to denote the diagrams of all
the degenerate cases. We recall  for the case $m=1$ defined below \cite[Section 2.5]{CLW11}:

\vspace{.3cm}

\begin{center}
\hskip -3cm \setlength{\unitlength}{0.16in}
\begin{picture}(24,2)
\put(8,2){\makebox(0,0)[c]{$\bigcirc$}}
\put(10.4,2){\makebox(0,0)[c]{$\bigcirc$}}
\put(14.85,2){\makebox(0,0)[c]{$\bigcirc$}}
\put(17.25,2){\makebox(0,0)[c]{$\bigcirc$}}
\put(19.4,2){\makebox(0,0)[c]{$\bigcirc$}}
\put(5.6,2){\makebox(0,0)[c]{$\bigcirc$}}
\put(8.4,2){\line(1,0){1.55}} \put(10.82,2){\line(1,0){0.8}}
\put(13.2,2){\line(1,0){1.2}} \put(15.28,2){\line(1,0){1.45}}
\put(6,2){\line(1,0){1.25}}
\put(17.7,2){\line(1,0){1.25}}
\put(12.5,1.95){\makebox(0,0)[c]{$\cdots$}}
\put(5.5,1){\makebox(0,0)[c]{\tiny$\beta_{\times}$}}
\put(8,1){\makebox(0,0)[c]{\tiny$\beta_{1}$}}
\put(10.4,1){\makebox(0,0)[c]{\tiny$\beta_{1}$}}
\put(17.2,1){\makebox(0,0)[c]{\tiny$\beta_{n-2}$}}
\put(19.3,1){\makebox(0,0)[c]{\tiny$\beta_{n-1}$}}
\put(-1,1.2){{\ovalBox(1.6,1.2){$\G^{\mf a}_n$}}}
\end{picture}
\end{center}
\begin{center}
\hskip -3cm \setlength{\unitlength}{0.16in}
\begin{picture}(24,2)
\put(8,2){\makebox(0,0)[c]{$\bigcirc$}}
\put(10.4,2){\makebox(0,0)[c]{$\bigcirc$}}
\put(14.85,2){\makebox(0,0)[c]{$\bigcirc$}}
\put(17.25,2){\makebox(0,0)[c]{$\bigcirc$}}
\put(19.4,2){\makebox(0,0)[c]{$\bigcirc$}}
\put(5.6,2){\makebox(0,0)[c]{$\bigcirc$}}
\put(8.4,2){\line(1,0){1.55}} \put(10.82,2){\line(1,0){0.8}}
\put(13.2,2){\line(1,0){1.2}} \put(15.28,2){\line(1,0){1.45}}
\put(17.7,2){\line(1,0){1.25}}
\put(6,1.8){$\Longleftarrow$}
\put(12.5,1.95){\makebox(0,0)[c]{$\cdots$}}
\put(5.5,1){\makebox(0,0)[c]{\tiny$-\epsilon_{-1}$}}
\put(8,1){\makebox(0,0)[c]{\tiny$\beta_{\times}$}}
\put(10.4,1){\makebox(0,0)[c]{\tiny$\beta_{1}$}}
\put(17.2,1){\makebox(0,0)[c]{\tiny$\beta_{n-2}$}}
\put(19.3,1){\makebox(0,0)[c]{\tiny$\beta_{n-1}$}}
\put(-1,1.2){{\ovalBox(1.6,1.2){$\G^{\mf b}_n$}}}
\end{picture}
\end{center}
\begin{center}
\hskip -3cm \setlength{\unitlength}{0.16in}
\begin{picture}(24,2)
\put(5.7,2){\makebox(0,0)[c]{$\bigcirc$}}
\put(8,2){\makebox(0,0)[c]{$\bigcirc$}}
\put(10.4,2){\makebox(0,0)[c]{$\bigcirc$}}
\put(14.85,2){\makebox(0,0)[c]{$\bigcirc$}}
\put(17.25,2){\makebox(0,0)[c]{$\bigcirc$}}
\put(19.4,2){\makebox(0,0)[c]{$\bigcirc$}}
\put(6.8,2){\makebox(0,0)[c]{$\Longrightarrow$}}
\put(8.4,2){\line(1,0){1.55}} \put(10.82,2){\line(1,0){0.8}}
\put(13.2,2){\line(1,0){1.2}} \put(15.28,2){\line(1,0){1.45}}
\put(17.7,2){\line(1,0){1.25}}
\put(12.5,1.95){\makebox(0,0)[c]{$\cdots$}}
\put(5.5,1){\makebox(0,0)[c]{\tiny$-2\epsilon_{-1}$}}
\put(8,1){\makebox(0,0)[c]{\tiny$\beta_{\times}$}}
\put(10.4,1){\makebox(0,0)[c]{\tiny$\beta_{1}$}}
\put(17.2,1){\makebox(0,0)[c]{\tiny$\beta_{n-2}$}}
\put(19.3,1){\makebox(0,0)[c]{\tiny$\beta_{n-1}$}}
\put(-1,1.2){{\ovalBox(1.6,1.2){$\G^{\mf c}_n$}}}
\end{picture}
\end{center}
\begin{center}
\hskip -3cm \setlength{\unitlength}{0.16in}
\begin{picture}(24,3.5)
\put(8,2){\makebox(0,0)[c]{$\bigcirc$}}
\put(10.4,2){\makebox(0,0)[c]{$\bigcirc$}}
\put(14.85,2){\makebox(0,0)[c]{$\bigcirc$}}
\put(17.25,2){\makebox(0,0)[c]{$\bigcirc$}}
\put(19.4,2){\makebox(0,0)[c]{$\bigcirc$}}
\put(6,3.8){\makebox(0,0)[c]{$\bigcirc$}}
\put(6,.3){\makebox(0,0)[c]{$\bigcirc$}}
\put(8.4,2){\line(1,0){1.55}} \put(10.82,2){\line(1,0){0.8}}
\put(13.2,2){\line(1,0){1.2}} \put(15.28,2){\line(1,0){1.45}}
\put(17.7,2){\line(1,0){1.25}}
\put(7.6,2.2){\line(-1,1){1.3}}
\put(7.6,1.8){\line(-1,-1){1.3}}
\put(12.5,1.95){\makebox(0,0)[c]{$\cdots$}}
\put(3.3,0.3){\makebox(0,0)[c]{\tiny${-}\epsilon_{-1}{-}\epsilon_{1}$}}
\put(4.7,3.8){\makebox(0,0)[c]{\tiny$\beta_{\times}$}}
\put(8.2,1){\makebox(0,0)[c]{\tiny$\beta_{1}$}}
\put(17.2,1){\makebox(0,0)[c]{\tiny$\beta_{n-2}$}}
\put(19.3,1){\makebox(0,0)[c]{\tiny$\beta_{n-1}$}}
\put(-1,1.2){{\ovalBox(1.6,1.2){$\G^{\mf d}_n$}}}
\end{picture}
\end{center}

We now recall the explicit matrix realization of the corresponding Lie algebras. For $m\in\N$, let ${\I}_m$ be the following totally ordered set:
\begin{align*}
\cdots <\ov{2}
<\ov{1}<\underbrace{\ov{-1}<\ov{-2}
<\cdots<\ov{-m}}_m<\ov{0}<\underbrace{-m<\cdots<-1}_m
<1<2<\cdots
\end{align*}
We let ${V}_{m}$ be the infinite dimensional space over $\C$ with
ordered basis $\{v_i|i\in {\I}_m\}$. With respect to this basis a linear map on
${V}_m$ may be identified with a complex matrix $(a_{rs})_{r,s\in {\mathbb{I}}_m}$.
The Lie algebra $\gl({V}_m)$ is the Lie subalgebra of linear transformations on
${V}_m$ consisting of $(a_{rs})$ with $a_{rs}=0$ for all but finitely many $a_{rs}$'s.
Denote by $E_{rs}\in\gl({V}_m)$ the elementary matrix with $1$ at the $r$th row and
$s$th column and zero elsewhere.

For $n\in\N\cup\{\infty\}$, we define the subset ${\mathbb I}_m^+(n)$ of the set ${\mathbb I}_m$ as follows:
\begin{align*}
\I^+_m(n) :=\{-m,\ldots,-1\}\cup\{i\in \N\,|\, i< n+1\} \quad \hbox{and}\quad \I^+_m:=\I^+_m(\infty).
\end{align*}
For $\xx =\mf a,  \mf c, \mf b, \mf d$, $m\in\N$ and $n\in \N\cup\{\infty\}$, the Lie algebra $\mathcal{G}_n^\xx$ corresponding
to the Dynkin diagram \makebox(23,0){$\oval(20,13)$}\makebox(-20,8){$\mf{\G}_n^\xx$} can be identified as a subalgebra of $\gl({V}_m)$ such that the standard Cartan subalgebra with a basis and corresponding dual basis $\{\epsilon_i\}$,
$$
\begin{cases} \{E_i:=E_{ii}\,|\,\ i\in{\I}^+_m(n)\},\quad &\text{for }\xx  =\mf a,\\
\{E_i:=E_{ii}-E_{\ov{i}\ov{i}}\,|\,i\in{\I}^+_m(n)\},\quad &\text{for }\xx  = \mf {b,c,d}
\end{cases}
$$
and all positive roots vectors are the nonzero multiple of the elements in the following set
$$
\begin{cases} \{E_{ij}\,|\,i< j,\ i,j\in{\I}^+_m(n)\},\ &\text{for }\xx  =\mf a,\\
 \{E_{ij}-E_{\ov{j}\ov{i}}\,|\,i< j,\ i,j\in{\I}^+_m(n)\}
  \cup\{E_{\ov{i}j}-E_{\ov{j}i}\,|\,i< j,\ i,j\in{\I}^+_m(n)\}\\
 \ \ \cup \{E_{\ov{i}\ov{0}}-E_{\ov{0}i}\,|\ i\in{\I}^+_m(n)\},\ &\text{for }
\xx  = \mf b,\\
\{E_{ij}-E_{\ov{j}\ov{i}}\,|\,i< j,\,i,j\in{\I}^+_m(n)\}
 \cup \{E_{\ov{i}j}+E_{\ov{j}i}\,|\,i\le j,\, i\le j\in{\I}^+_m(n)\},\ &\text{for }
\xx  = \mf c,\\
\{E_{ij}-E_{\ov{j}\ov{i}}\,|\,i< j,\ i,j\in{\I}^+_m(n)\}
 \cup\{E_{\ov{i}j}-E_{\ov{j}i}\,|\,i< j,\ i,j\in{\I}^+_m(n)\},\ &\text{for }
\xx  = \mf d.
\end{cases}
$$
Let $\mf{\G}_n^\xx$ be the central extension
of $\mathcal{G}_n^\xx$ by the one-dimensional center $\C K$ determined by the $2$-cocycle
\begin{align*}
\tau(A,B):=\text{tr}([J,A]B),\qquad A, B\in\mathcal{G}_n^\xx,
\end{align*}
where
$$
J:=\begin{cases} -\sum_{r\geq 1}E_{rr},\quad &\text{for }\xx  =\mf a,\\
E_{\ov{0}\ov{0}}+\sum_{r\leq \bar 1}E_{\ov{r}\ov{r}},\quad &\text{for }\xx  = \mf {b,c,d},
\end{cases}
$$
and $\text{tr}$ denotes the trace. Note that the central extensions are trivial. Indeed, there is an isomorphism $\iota$ from $\mathcal{G}_n^\xx\oplus\C K$ to $\mf{\G}_n^\xx$ (cf. \cite[Section 2.5]{CLW11}) defined by
\begin{equation}\label{iota}
\iota(A)= A+\text{tr}(JA) K, \quad \hbox{for } A\in  \mathcal{G}_n^\xx\qquad\hbox{and} \qquad \iota(K)= K.
\end{equation}

For $n\in \N\cup\{\infty\}$, let $\mathcal{H}^\xx_n$ and $\h^\xx_n$ denote the standard Cartan subalgebras of $\mathcal{G}^\xx_n\oplus\C K$ and $\G^\xx_n$ with bases $\{K,{E}_{r}\, ( r\in{\I}^+_m(n))\}$ and  $\{K,{E}_{r}\, ( r\in{\I}^+_m(n))\}$, respectively.
For each positive simple root $\alpha$ in the diagram \eqnref{Dynkin:combined},  the reflection associated to $\alpha$ on $\mathcal{H}^\xx_n$ is defined by:
\begin{align*}
\sigma_{\alpha}(h):= h -\alpha(h)h_\alpha,\quad \hbox{for all } h\in \mathcal{H}^\xx_n\cap \mathcal{G}^\xx_n \quad  \text{and}\quad
\sigma_{\alpha}(K) :=K,
\end{align*}
where $h_\alpha$ is the coroot of $\alpha$ in $\mathcal{G}^\xx_n$.
We define the reflection on $(\h_n^\xx)^*$, also denoted by $\sigma_{\alpha}$, in term of $\sigma_{\alpha}$ defined above by
\begin{align*}
\sigma_{\alpha}(\mu)(h):=\mu\big(\iota \sigma_{\alpha}^{-1}\iota^{-1}(h)\big),\qquad  \hbox{ for all }\,\,\mu\in (\h_n^\xx)^*  \,\hbox{ and }\, h\in \h^\xx_n.
\end{align*}

The Weyl group $W^\xx_n$ of $\G^\xx_n$ is defined to be the subgroup of $Aut((\h^\xx_n)^*)$ generated by all $\sigma_{\alpha}$ (cf. \cite[Section 2.3]{HLT12}).

Let $\rho_n^\xx\in(\h^\xx_n)^*$ be defined by
\begin{equation*}
\begin{aligned}
& \rho_n^\xx({E}_{j}):=
\begin{cases}
-m-\hf-j+\delta_j, & \text{for $x=\mf{b}$, $j\in \I^+_m(n) $}, \\
-m-1-j+\delta_j, & \text{for $x=\mf{c}$, $j\in \I^+_m(n) $}, \\
 -m-j+\delta_j, & \text{for $x=\mf{a},\mf{d}$, $j\in \I^+_m(n) $},
\end{cases}
\end{aligned}
\quad  \text{and}\quad
\rho^\xx_n(K) :=0,
\end{equation*}
where $\delta_j$ is defined by
\begin{align}\label{delta}
 \delta_j:=
\begin{cases}
1, & \text{for $j\in \N $}, \\
0, & \text{for $j\notin \N $}.
\end{cases}
\end{align}
The dot action of $W^\xx_n$ on $(\h^\xx_n)^*$ is given by:
$$
w\cdot\mu:= w \left(\mu+\rho_n^\xx\right)-\rho_n^\xx, \text{ for } w\in W^\xx_n\,\,\text{and} \,\, \mu \in (\h^\xx_n)^*.
$$

\subsection{Parabolic BGG category}\label{sec:para} In this subsection, we recall the parabolic categories.
For $n\ge 1$, we set $Y_n^\xx$ to be a fixed subset of the simple roots in the \makebox(23,0){$\oval(20,13)$}\makebox(-20,8){$\mf{\G}_n^\xx$} such that $\beta_\times\notin Y_n^\xx$ and $Y_n^\xx\supseteq \{\beta_i \, |\,1\le i<n\}$.
Let $\mf{l_n^\xx}$ denote the Levi subalgebra of $\G_n^\xx$ corresponding to the subset $Y_n^\xx$.
The Borel subalgebra of $\G_n^\xx$ spanned by the central element $K$ and 
the upper triangular matrices in $\G_n^\xx$ is denoted by $\mf{b}_n^\xx$. Let $\mf{p}_n^\xx :=\mf{l}_n^\xx +\mf{b}_n^\xx$ be the corresponding parabolic subalgebra with
nilradical $\mf{u}_n^\xx$ and opposite nilradical $\mf{u}_{n-}^\xx$.

\vskip 0.3cm
\noindent{\bf In the remainder of the paper we shall drop the superscript
$\xx$, which denotes a fixed type among
$\mf{a, b,c,d}$.}

\vskip 0.3cm

For $n\in \N\cup\{\infty\}$, let $\La_0$ be an element in $(\h_n)^*$ defined by letting
 \begin{equation*}
\langle\La_0,K\rangle = \Lambda_0(K):= 1
\qquad
\hbox {and}
\qquad
\langle \La_0, {E}_{r}\rangle = \Lambda_0(E_r):=0, \,\,\, \hbox {for all $r$.}
\end{equation*}
We define the set of {\em integral} weights to be
\begin{align*}
X_n:=\{\la:=\sum_{i=-m}^{n}\la_i\epsilon_i + d\La_0\in \h_n^*\, |\,& \la(\alpha^{\vee})\in\Z,\, \forall \alpha\in \Pi(\G_n);\, \la_i  \in\Z, \forall i\ge 1;\\
&\la_j=0 \,\, \hbox{for } j\gg 0\, \hbox{ and }\, d\in\hf\Z\},
\end{align*}
and let
\begin{equation*}\mcP^-_n:=\{\la\in X_n\,|\, \la(\alpha^{\vee})\in\Z_+,\, \forall\alpha\in Y_n\}\,\,\,\text{and}\,\,\,
\mathcal{P}_n:=\{\la\in \mc P_n^-\,|\, \la(E_j)\in\Z_+,\, \forall j\ge 1\},\label{weight:Im}
\end{equation*}
where $\Pi(\G_n):=\Pi(\mf{k})\cup\{\beta_j\,|\, 1\le j<n\}\cup\{\beta_\times\}$ is a set of simple root of $\G_n$ and $\alpha^{\vee}:=\iota(h_\alpha)$ is the coroot of $\alpha$. Note that $\mathcal{P}_n= \mc{P}^-_n$ for $n=\infty$ but $\mathcal{P}_n\subsetneq \mcP^-_n$ for $n\not=\infty$. For $n,k \in \N \cup \{\infty\} $ with $n \geq k$, we identify $\mc{P}_k$ (resp. $X_k$) as a subset of $\mc{P}_n$ (resp. $X_n$) by setting $\la(E_j)=0$ for $j>k$ and $\la\in \mc{P}_k$ (resp. $X_k$). An integral weight $\mu\in X_n$ is said to be {\em dominant} if $(\mu+\rho_n)(\alpha^\vee)\ge 0$ for all $\alpha \in \Pi(\G_n)$.

For $n\in\Ninf$ and a semisimple $\h_n$-module $M$, let $M_\mu$ denote the $\mu$-weight space of $M$ for $\mu\in \h_n^*$.
For $\mu\in \mcP^-_n$, let $L(\mf{l}_n,\mu)$ denote the irreducible highest weight
 $\mf{l}_n$-module of highest weight $\mu$. We extend
$L(\mf{l}_n,\mu)$ to a $\mf{p}_n$-module by letting $\mf{u}_n$ act
trivially.  Define as usual the parabolic Verma module
$\Delta_n(\mu)$ and its irreducible quotient $L_n(\mu)$ over $\G_n$ by
\begin{align*}
\Delta_n(\mu):=\text{Ind}_{\mf{p}_n}^{\G_n}L(\mf{l}_n,\mu)\qquad\text{and}\qquad
\Delta_n(\mu) \twoheadrightarrow L_n(\mu).
\end{align*}
For $n\in \N\cup\{\infty\}$, let $\mc{O}_n^-$ be the full subcategory of the category of $\G_n$-modules  such that every $M\in \mc{O}_n^-$ is a semisimple
${\h}_n$-module with finite-dimensional weight spaces $M_\gamma$, for $\gamma\in X_n$,
satisfying
\begin{itemize}
\item[(i)] ${M}$ over ${\mf{l}_n}$ decomposes into a direct sum of
$L({\mf{l}_n},\mu)$ for $\mu\in \mc{P}^-_n$.
\item[(ii)] There exist finitely many weights
$\la^1,\la^2,\ldots,\la^k\in\mc{P}^-_n$ (depending on ${M}$) such that
if $\gamma$ is a weight in ${M}$, then
$\gamma\in\la^i-\sum_{\alpha\in\Pi(\G_n)}\Z_+\alpha$, for some $i$.
\end{itemize}
For $d\in \hf\Z$, let $\mc{O}_n^-(d)$ denote the full subcategory $\mc{O}_n^-$ consisting of modules $M$ satisfying $$Kv=d\La_0(K)v=dv,\qquad \,\, \text{for all }\,\, v\in M.$$
Let $\mc{O}_n$ be the full subcategory of $\mc{O}_n^-$ such that  every  $M\in\mc{O}_n$ decomposes into a direct sum of
$L({\mf{l}_n},\mu)$ for $\mu\in \mathcal{P}_n$  and let $\mc{O}_n(d):=\mc{O}_n\cap\mc{O}_n^-(d)$.  Note that $\mc{O}_n$ are abelian categories and $\mc{O}_n=\mc{O}_n^-$ for $n=\infty$ but $\mc{O}_n\subsetneq \mc{O}_n^-$ for $n\not=\infty$. It is clear that $\mc{O}_n^-=\bigoplus_{d\in\hf\Z}\mc{O}_n^-(d)$ and $\mc{O}_n=\bigoplus_{d\in\hf\Z}\mc{O}_n(d)$. The  categories $\mc{O}$ and $\mc{O}(d)$ are indeed  subcategories of categories of $\mc{O}$ and $\mc{O}(d)$ defined in \cite[Section 6.2.3]{CW08} consisting of integral weights modules, respectively. We define $(\mc{O}_n^-)_f$ as the full subcategory of $\mc{O}_n^-$ consisting of finitely
generated $\G_n$-modules. Also, $(\mc{O}_n)_f$, $\mc{O}_n(d)_f$, etc. are defined in the similar way. Note that, $\mc{O}_n^-=(\mc{O}_n)_f^-$, for $n\in\N$, are the usual parabolic BGG categories and hence $\mc{O}_n^-$ have enough projectives and injectives.

For $n\in \N\cup\{\infty\}$ and  $M \in \mc O^-_n$, we define the {\em dual} module of $M$ as follows:
\begin{equation}
\label{Eq::def::duality} M^{\vee} := \bigoplus_{\mu \in X_n}M^*_{\mu},
\end{equation}
where $M^\vee$ carries the standard $\mf g_n$-action through $\tau$, where $\tau : \mf g_n \longrightarrow \mf g_n$ is the transpose map (see, for example, \cite[Section 0.5]{Hum08}). Then the functor $\vee$, called the {\em duality} functor, is exact from $\mc O^-_n$ (resp. $\mc O_n$) to itself sending each irreducible module to itself such that $\vee\circ\vee$ is naturally isomorphic to the identity functor (see, for example, \cite[Section  3.2]{Hum08}).  For $n\in \N\cup\{\infty\}$ and $\mu\in \mcP^-_n$, $\nabla_n(\mu):=\Delta_n(\mu)^\vee$ is called a {\em costandard} module.

We will simply drop $n$ for $n=\infty$. For example, $\mc{O}=\mc{O}_{\infty}$,
$X =X_{\infty}$, $\mathcal{P}=\mathcal{P}_\infty$, $\mf{l}= \mf{l}_{\infty}$, $\G=\G_{\infty}$, $L(\la)=L_{\infty}(\la)$ and $\Delta(\la)=\Delta_{\infty}(\la)$, etc.

\begin{lem}\emph{(}\cite[Corollary 6.8]{CW12}\emph{)}\label{2.1}
For $n\in \N\cup\{\infty\}$ and $\mu\in \mathcal{P}_n$,
$\Delta_n(\mu)$, $\nabla_n(\mu)$ and $L_n(\mu)$ lie in $\mc O_n$. Furthermore, $\{L_n(\ga)|~\ga\in \mathcal{P}_n\}$ is a complete list of non-isomorphic irreducible modules in $\mc O_n$.
\end{lem}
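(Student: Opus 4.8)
The plan is to reduce the whole statement to a single module, $\Delta_n(\mu)$, and then propagate everything else by formal operations. First I would verify directly that $\Delta_n(\mu)\in\mc{O}_n$ for $\mu\in\mathcal{P}_n$ by checking the three defining conditions. By the PBW theorem, $\Delta_n(\mu)\cong U(\mf{u}_{n-})\otimes L(\mf{l}_n,\mu)$ as an $\mf{l}_n$-module, so every weight of $\Delta_n(\mu)$ has the form $\text{wt}(u)+\text{wt}(w)$ with $u\in U(\mf{u}_{n-})$ and $w\in L(\mf{l}_n,\mu)$. The $\h_n$-semisimplicity is automatic, and condition (ii) holds with the single bound $\mu$, since all weights lie in $\mu-\sum_{\alpha\in\Pi(\G_n)}\Z_+\alpha$. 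The finite-dimensionality of weight spaces follows from the finiteness of the weight multiplicities of $L(\mf{l}_n,\mu)$ together with the finitely-supported nature of the weights involved; I regard this as routine.

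The heart of the proof is condition (i): that every $\mf{l}_n$-constituent $L(\mf{l}_n,\nu)$ of $\Delta_n(\mu)$ has $\nu\in\mathcal{P}_n$, equivalently $\nu(E_j)=\nu_j\ge0$ for all $j\ge1$ (the remaining requirements, $Y_n$-dominance and integrality, being automatic for an $\mf{l}_n$-highest weight lying in $X_n$). For this I would prove the stronger statement that \emph{every} weight $\gamma$ of $\Delta_n(\mu)$ satisfies $\gamma_j\ge0$ for all $j\ge1$, analysing the two tensor factors separately. For the factor $w$: since $\mu\in\mathcal{P}_n$, its restriction to the tail Levi $\gl$-factor is a partition, so $L(\mf{l}_n,\mu)$ is polynomial in the tail directions and all its weights have non-negative tail coordinates. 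For the factor $u$: I would observe that, in the positivity convention fixed by the diagram $\G_n$ (where the positive roots attached to the head are $-\epsilon_i-\epsilon_j$, $-2\epsilon_i$ and, in type $\mf{b}$, $-\epsilon_i$), every positive root lying in the nilradical $\mf{u}_n$ has non-positive $\epsilon_j$-coefficient for each tail index $j\ge1$; the only positive roots with a strictly positive tail coefficient are the roots $\epsilon_i-\epsilon_j$ with $i<j$ both in the tail, and these lie in the tail Levi, not in $\mf{u}_n$. Hence $\text{wt}(u)$, a non-negative combination of negatives of nilradical roots, has non-negative tail coordinates, and adding the two contributions gives $\gamma_j\ge0$. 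I expect this bookkeeping — verifying case by case that no nilradical root lowers a tail coordinate — to be the main (though elementary) obstacle, and the place where the four types $\mf{a,b,c,d}$ must be handled on the same footing. Note that for $n=\infty$ one has $\mathcal{P}_\infty=\mcP^-_\infty$ and $\mc{O}=\mc{O}^-$, so the genuinely new input of condition (i) concerns finite $n$, where $Y_n$-dominance alone would permit a negative last coordinate $\nu_n$.

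Granting $\Delta_n(\mu)\in\mc{O}_n$, the remaining assertions follow formally. Since $L_n(\mu)$ is a quotient of $\Delta_n(\mu)$ and each defining condition of $\mc{O}_n$ passes to subquotients (semisimplicity of $\h_n$-action, finiteness of weight spaces, membership of $\mf{l}_n$-constituents in $\mathcal{P}_n$, and the upper bound on weights are all inherited), we obtain $L_n(\mu)\in\mc{O}_n$; and since the duality $\vee$ is exact and preserves $\mc{O}_n$, we obtain $\nabla_n(\mu)=\Delta_n(\mu)^{\vee}\in\mc{O}_n$. For completeness, I would argue that any irreducible $M\in\mc{O}_n$ is a highest weight module: condition (ii) bounds the weights of $M$ above by finitely many elements, so the weight poset of $M$ has a maximal element $\gamma$; any $0\ne v\in M_\gamma$ is killed by $\mf{n}^+$ (as $M_{\gamma+\alpha}=0$ for every positive root $\alpha$), hence generates $M$ by irreducibility, giving $M\cong L_n(\gamma)$. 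Applying condition (i) to the $\mf{l}_n$-submodule $U(\mf{l}_n)v\cong L(\mf{l}_n,\gamma)$ forces $\gamma\in\mathcal{P}_n$. Thus every irreducible of $\mc{O}_n$ appears among the $L_n(\gamma)$ with $\gamma\in\mathcal{P}_n$, and these are pairwise non-isomorphic since they have distinct highest weights.
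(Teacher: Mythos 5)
The paper does not prove this lemma at all: it is imported verbatim from \cite[Corollary 6.8]{CW12}, so there is no internal argument to compare yours against. Your direct verification is sound and is essentially the standard one. You correctly isolate the only non-formal point, namely that every $\mf{l}_n$-constituent of $\Delta_n(\mu)=U(\mf{u}_{n-})\otimes L(\mf{l}_n,\mu)$ has highest weight in $\mathcal{P}_n$ and not merely in $\mcP^-_n$, and your root bookkeeping is right: in all four types the positive roots carrying a strictly positive coefficient on some $\epsilon_j$ with $j\ge 1$ are exactly the $\epsilon_i-\epsilon_j$ with $i<j$ both in the tail, and these lie in $\mf{l}_n$ because $Y_n\supseteq\{\beta_i\mid 1\le i<n\}$; the remaining roots $-\epsilon_i-\epsilon_j$, $-2\epsilon_i$, $-\epsilon_i$ (which, as a small correction to your phrasing, occur for arbitrary index pairs, not only head indices) all have non-positive tail coefficients, so $\mathrm{wt}(u)$ has non-negative tail coordinates, and the tail factor of $L(\mf{l}_n,\mu)$ is polynomial since $(\mu_1,\dots,\mu_n)$ is a partition. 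Two points you pass over quickly but which are genuinely needed for membership in $\mc O_n^-$ itself are the $\mf{l}_n$-semisimplicity of $\Delta_n(\mu)$ and the finite-dimensionality of weight spaces when $n=\infty$; both are routine (each $U(\mf u_{n-})$-graded piece is a locally finite $\mf l_n$-module, and a fixed finitely supported weight admits only finitely many decompositions into roots), and they are part of what \cite{CW12} supplies. The completeness argument via a maximal weight and the resulting singular vector is correct as written.
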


\subsection{Truncation functors} In this subsection, we recall the truncation functors and their properties on the cohomology groups and the extension groups.
Recall that $\mc{P}_k$ (resp. $X_k$), for $0\le k< n\le\infty$, is regarded  as  a subset of $\mc{P}_n$ (resp. $X_n$) by defining $\la(E_j)=0$ for all $k<j$.  The {\em truncation functor}
\begin{equation}\label{tr}
\mf{tr}^{n}_k:\mc{O}_n \longrightarrow\mc{O}_k
\end{equation}
is defined by $\mf{tr}^{n}_k(M):=\bigoplus_{\mu\in X_k}M_\mu$ for $M\in \mc{O}_n$  (see \cite[Section 6.2.5]{CW12}). Note that truncation functors are exact functors  commuting with the duality functor. We refer to \cite[Section 6.2.5]{CW12} for more details.

\begin{lem}\label{lem:trunc} \emph{(\cite[Proposition 6.9]{CW12})}
For $0\le k< n \le \infty$, $Z=L,\Delta, \nabla$ and $\la \in\mathcal{P}_n$,
\begin{align*}\mf{tr}_k^n\big{(}Z_n(\la)\big{)} =
\begin{cases}
Z_k (\la),&\quad\text{if }
\la \in  \mc{P}_k;\\
0,&\quad\text{otherwise}.
\end{cases} \end{align*}
\end{lem}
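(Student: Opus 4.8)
The plan is to reduce everything to the case $Z=\Delta$ and then carry out a weight-space bookkeeping on $\Delta_n(\la)$. The reductions are formal. Since $\nabla_n(\la)=\Delta_n(\la)^\vee$ and $\mf{tr}^n_k$ commutes with the duality $\vee$, the case $Z=\nabla$ follows from $Z=\Delta$ on noting $\Delta_k(\la)^\vee=\nabla_k(\la)$ and $0^\vee=0$. For $Z=L$ I would use the canonical map $\phi\colon\Delta_n(\la)\to\nabla_n(\la)$ whose image is $L_n(\la)$; as $\mf{tr}^n_k$ is exact, $\mf{tr}^n_k L_n(\la)$ is the image of $\mf{tr}^n_k\phi\colon\mf{tr}^n_k\Delta_n(\la)\to\mf{tr}^n_k\nabla_n(\la)$. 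When $\la\notin\mc{P}_k$ both ends vanish; when $\la\in\mc{P}_k$ this is a map $\Delta_k(\la)\to\nabla_k(\la)$ which is nonzero on the surviving highest weight vector, hence factors through the simple top $L_k(\la)$ and has image $L_k(\la)$.

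The crux is therefore $\mf{tr}^n_k\Delta_n(\la)$. Using the PBW realization $\Delta_n(\la)\cong U(\mf{u}_{n-})\otimes L(\mf{l}_n,\la)$, I would determine which weights $\mu$ survive, i.e. satisfy $\mu(E_j)=0$ for all $j>k$. The first key point (necessity) is purely on the level of weights: writing $\la-\mu=\sum_{\alpha\in\Pi(\G_n)}c_\alpha\alpha$ with $c_\alpha\ge 0$, for each tail index $j\ge 1$ only $\beta_{j-1},\beta_j$ (and $\beta_\times$ when $j=1$) carry a nonzero $\epsilon_j$-coefficient among the simple roots. A downward induction on $j=n,n-1,\dots,k+1$, using $\mu_j=0$ together with $\la_j\ge 0$, forces $c_{\beta_{n-1}}=\cdots=c_{\beta_k}=0$ and $\la_{k+1}=\cdots=\la_n=0$. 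Thus no weight survives unless $\la\in\mc{P}_k$, which gives the ``otherwise'' case; and when $\la\in\mc{P}_k$ every surviving $\mu$ satisfies $\la-\mu\in\Z_{\ge 0}\Pi(\G_k)$.

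For the identification when $\la\in\mc{P}_k$, I would first produce a $\G_k$-module injection $\Delta_k(\la)\hookrightarrow\mf{tr}^n_k\Delta_n(\la)$: the inclusions $\mf{l}_k\subseteq\mf{l}_n$ and $\mf{u}_{k-}\subseteq\mf{u}_{n-}$ identify $L(\mf{l}_k,\la)$ with the $\mf{l}_k$-submodule of $L(\mf{l}_n,\la)$ generated by $v_\la$, on which $\mf{u}_k$ acts trivially; universality of parabolic induction then gives the map, and PBW-freeness of $U(\mf{u}_{n-})$ over $U(\mf{u}_{k-})$ yields injectivity, with image in the $X_k$-weights. To see it is onto, I would compare characters. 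Every positive root in $\mf{u}_n$ that involves a tail index $j$ has a negative $\epsilon_j$-coefficient, while the weights of the tail factor $L(\gl_n,(\la_1,\dots,\la_n))$ have non-negative tail coordinates; hence for a weight of $\Delta_n(\la)$ one has $\mu_j\ge\nu_j\ge 0$ for each tail $j$, where $\nu$ is the underlying weight of $L(\mf{l}_n,\la)$. Consequently a surviving weight uses no $\mf{u}_{n-}$-generator touching a tail index $>k$, which restricts the $U(\mf{u}_{n-})$-factor to $U(\mf{u}_{k-})$, and it comes from a tail weight supported in the first $k$ coordinates, which by the branching $\gl_n\downarrow\gl_k$ (the weights of $L(\gl_n,\la'')$ supported in the first $k$ entries form $L(\gl_k,\la'')$ when $\la''$ has at most $k$ parts) restricts $L(\mf{l}_n,\la)$ to $L(\mf{l}_k,\la)$. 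Thus $\operatorname{ch}\mf{tr}^n_k\Delta_n(\la)=\operatorname{ch}\Delta_k(\la)$, and the injection is an isomorphism.

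The main obstacle is this surjectivity/character step, and in particular keeping the root-theoretic bookkeeping uniform across types $\mf{a,b,c,d}$: one must verify that, in the conventions fixed here, no positive root in $\mf{u}_n$ can raise a tail coordinate (so that the opposite nilradical only increases tail coordinates), and that the type-$A$ branching identity applies verbatim to the tail Levi $\gl_n$ irrespective of the head type. Once these two facts are in place the computation, and hence the whole argument, is uniform in the type.
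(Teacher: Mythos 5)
The paper offers no argument for this lemma at all --- it is imported verbatim from \cite[Proposition 6.9]{CW12} --- so there is no in-paper proof to compare against; your blind proof is a correct direct verification, essentially along the lines of the cited source. The formal reductions are fine ($\nabla$ via duality, $L$ via the image of $\Delta_n(\la)\to\nabla_n(\la)$ under the exact functor $\mf{tr}^n_k$, using that a nonzero map from a standard to a costandard object has image the simple top since $L_k(\la)$ occurs once in $\nabla_k(\la)$ and is its socle). The two substantive inputs both check out in the conventions fixed here: (a) every root of $\mf{u}_n$ has non-positive coefficient at each tail coordinate $\epsilon_j$, $j\ge 1$, uniformly in $\xx=\mf{a},\mf{b},\mf{c},\mf{d}$ --- the listed positive roots are $\epsilon_i-\epsilon_j$ ($i<j$), $-\epsilon_i-\epsilon_j$, $-\epsilon_i$, $-2\epsilon_i$, and the only ones with a strictly positive tail coefficient are $\epsilon_i-\epsilon_j$ with $i<j$ both tail indices, which lie in $\mf{l}_n$ because $Y_n\supseteq\{\beta_1,\dots,\beta_{n-1}\}$; this also covers the degenerate $m=1$ diagrams (e.g.\ the extra simple root $-\epsilon_{-1}-\epsilon_1$ of $\G^{\mf d}_n$ has tail coefficient $-1$, so your downward induction and sign argument are unaffected); and (b) the tail factor of $L(\mf{l}_n,\la)$ is a polynomial $\gl(n)$-module, so its weights have non-negative tail coordinates and the span of those supported on the first $k$ coordinates is $L(\gl(k),(\la_1,\dots,\la_k))$. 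Two points you leave implicit and should record: the compatibility $Y_k=Y_n\cap\Pi(\G_k)$ (so that $\mf{l}_k\subseteq\mf{l}_n$, $\mf{u}_k\subseteq\mf{u}_n$ and $\mf{u}_{k-}\subseteq\mf{u}_{n-}$), which is part of the setup inherited from \cite{CW12}; and that the necessity step applies equally when $n=\infty$, since any single weight of $\Delta_\infty(\la)$ involves only finitely many simple roots and $\la_j=0$ for $j\gg0$, so the downward induction can start at a sufficiently large finite index. With these made explicit the argument is complete.
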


For ${M},{N}\in{\mc{O}_n}$ and $i\in \Z_+$, the $i$-th extension group ${\rm
Ext}_{{\mc{O}_n}}^i({M},{N})$ is defined in the sense of Yoneda (see,
for example,~\cite[Chapter VII]{Mit65}). In particular, we have ${\rm Ext}_{{\mc{O}_n}}^0({M},{N})={\rm
Hom}_{{\G_n}}({M},{N})$.

For a Lie algebra $u$ and a $u$-module $N$, let
$H^i(u;{N})$ denote the $i$-th restricted (in the sense of \cite{Liu} for ${\rm dim}\, u=\infty$) $u$-cohomology group of $N$ (\cite[\S 6.4.1]{CW12}). The following proposition is well-known when $n\in \N$ (see, e.g., \cite[\S7 Theorem 2]{RW82}). For $n=\infty$, it is a part of \cite[Theorem 6.34]{CW12}). Note that there is no non-trivial extension between  two irreducible $\mf{l}_n$-modules $L(\mf{l}_n,\mu)$ and $L(\mf{l}_n,\la)$ for $\mu, \la\in \mathcal{P}_n$ and $n\in \N\cup\{\infty\}$.

\begin{prop}\label{Prop::relative ext} For $n\in \N \cup \{\infty\} $, $\mu\in \mc{P}_n$ and $N\in \mc{O}_n$, we have
\begin{align*}
{\rm Ext}^i_{\mc O_{n}}
(\Delta_n(\mu),N) \cong
{\rm Hom}_{{\mf{l}}_n}\big{(}
L({\mf{l}}_n,\mu),H^i({\mf{u}}_n;{N})\big{)},\qquad\text{for $i=0,1$}.
\end{align*}
\end{prop}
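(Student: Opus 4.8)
The plan is to compute the $\Ext$-groups in $\mc O_n$ by reducing them to the restricted $\mf{u}_n$-cohomology of $N$ in two moves: a Frobenius reciprocity that replaces $\Delta_n(\mu)=\text{Ind}_{\mf{p}_n}^{\G_n}L(\mf{l}_n,\mu)$ by the inflated $\mf{p}_n$-module $L:=L(\mf{l}_n,\mu)$ (on which $\mf{u}_n$ acts trivially), and a Hochschild--Serre mechanism for the ideal $\mf{u}_n\trianglelefteq\mf{p}_n$ with quotient $\mf{l}_n$. The vanishing of extensions among $\mf{l}_n$-irreducibles recorded in the Remark above is what makes everything collapse in degrees $\le 1$.

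I would first treat $i=0$. Frobenius reciprocity gives $\Hom_{\G_n}(\Delta_n(\mu),N)\cong\Hom_{\mf{p}_n}(L,N)$. As $\mf{u}_n$ acts trivially on $L$, every $\mf{p}_n$-homomorphism has image in $N^{\mf{u}_n}=H^0(\mf{u}_n;N)$ and is exactly an $\mf{l}_n$-homomorphism there, so $\Hom_{\G_n}(\Delta_n(\mu),N)\cong\Hom_{\mf{l}_n}\big(L,H^0(\mf{u}_n;N)\big)$, which is the case $i=0$.

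For $i=1$ I would argue in three steps. \emph{(Step 1.)} The category $\mc O_n$ is closed under extensions inside $U(\G_n)$-modules: an extension of $\Delta_n(\mu)$ by $N$ is automatically $\h_n$-semisimple with finite-dimensional weight spaces and satisfies condition (ii), while condition (i) is preserved precisely because there are no nontrivial extensions among the $\mf{l}_n$-irreducibles $L(\mf{l}_n,\mu)$, $\mu\in\mcP_n$. Hence $\Ext^1_{\mc O_n}(\Delta_n(\mu),N)\cong\Ext^1_{U(\G_n)}(\Delta_n(\mu),N)$. \emph{(Step 2.)} Since $\text{Ind}_{\mf{p}_n}^{\G_n}=U(\G_n)\otimes_{U(\mf{p}_n)}(-)$ is exact and left adjoint to the exact restriction functor, it sends projectives to projectives, so inducing a projective resolution of $L$ gives one of $\Delta_n(\mu)$ and the derived Frobenius reciprocity yields $\Ext^i_{U(\G_n)}(\Delta_n(\mu),N)\cong\Ext^i_{U(\mf{p}_n)}(L,N)$ for all $i$. \emph{(Step 3.)} Because a $\mf{p}_n$-map out of the inflated $L$ is the same as an $\mf{l}_n$-map into the $\mf{u}_n$-invariants, the functor $\Hom_{\mf{p}_n}(L,-)$ factors as $\Hom_{\mf{l}_n}(L,-)\circ(-)^{\mf{u}_n}$, and the associated Grothendieck (Hochschild--Serre) spectral sequence $E_2^{p,q}=\Ext^p_{\mf{l}_n}\big(L,H^q(\mf{u}_n;N)\big)\Rightarrow\Ext^{p+q}_{\mf{p}_n}(L,N)$ has five-term exact sequence
\begin{equation*}
\begin{aligned}
0 &\to \Ext^1_{\mf{l}_n}(L,N^{\mf{u}_n}) \to \Ext^1_{\mf{p}_n}(L,N)\\
&\to \Hom_{\mf{l}_n}\big(L,H^1(\mf{u}_n;N)\big) \xrightarrow{\ d_2\ } \Ext^2_{\mf{l}_n}(L,N^{\mf{u}_n}).
\end{aligned}
\end{equation*}
Here $N^{\mf{u}_n}$ is an $\mf{l}_n$-submodule of the $\mf{l}_n$-semisimple module $N$ (condition (i)), hence itself a direct sum of $\mf{l}_n$-irreducibles; as the vanishing of $\Ext^1$ between $\mf{l}_n$-irreducibles forces the finite-length $\mf{l}_n$-module category to be semisimple, one gets $\Ext^{\ge 1}_{\mf{l}_n}(L,N^{\mf{u}_n})=0$. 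Thus the two outer terms vanish, the middle arrow is an isomorphism, and combined with Steps 1--2 this is the case $i=1$.

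The hard part will be the case $n=\infty$. There $\G_n,\mf{p}_n,\mf{u}_n,\mf{l}_n$ are all infinite-dimensional, $H^i(\mf{u}_n;N)$ must be read as the restricted cohomology of \cite{CW12,Liu}, and $U(\mf{l}_n)$ is not Noetherian, so one cannot freely commute $\Ext_{\mf{l}_n}$ with the (generally infinite) direct-sum decompositions underpinning the semisimplicity step, nor is the convergence of the spectral sequence automatic. My remedy would be to reduce to finite rank through the exact truncation functors $\mf{tr}^{\infty}_k$ of \lemref{lem:trunc}: using that truncation is exact and compatible with both $\Ext$ and $\mf{u}$-cohomology, I would realize each side as the direct limit over $k$ of its finite-rank analogue, where Steps 1--3 apply verbatim, and then pass to the limit. (Equivalently one simply invokes \cite[\S7, Theorem 2]{RW82} for finite $n$ and \cite[Theorem 6.34]{CW12} for $n=\infty$, as the excerpt already notes.) Establishing these limit compatibilities---exactness, and the identification $H^i(\mf{u}_n;N)\cong\varinjlim_k H^i(\mf{u}_k;\mf{tr}^{\infty}_k N)$ in degrees $i\le 1$---is the technical heart, and is exactly where the restricted-cohomology formalism is needed.
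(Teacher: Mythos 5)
Your case $i=0$ is correct, but the paper does not actually prove this proposition at all: it quotes it from \cite[\S7, Theorem 2]{RW82} for $n\in\N$ and \cite[Theorem 6.34]{CW12} for $n=\infty$. The substance to assess is therefore your own argument for $i=1$, and there it has a genuine gap at Step 1 which propagates into Step 3.

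Step 1 is false: $\mc O_n$ is not closed under extensions inside $U(\G_n)$-Mod, because an extension of two $\h_n$-semisimple modules need not be $\h_n$-semisimple. Already for $\mathfrak{sl}_2$ with $\mf p=\mf b$ one has $\Ext^1_{\mc O}(\Delta(0),\Delta(0))=0$, while Frobenius reciprocity and Hochschild--Serre give $\Ext^1_{U(\mf g)}(\Delta(0),\Delta(0))\cong H^1(\mf b;\Delta(0))\supseteq H^1(\h;\Delta(0)^{\mf n})=H^1(\h;\C_0)=\h^*\neq 0$; the nonzero classes are precisely extensions on which $\h$ acts non-semisimply. The same phenomenon invalidates the vanishing you invoke in Step 3: the term $\Ext^1_{\mf l_n}(L,N^{\mf u_n})$ in your five-term sequence is an \emph{absolute} Ext over $U(\mf l_n)$, and since $\mf l_n$ has nonzero abelianization (it contains most of $\h_n$ and the central element $K$), twisting the $\mf l_n$-action on $L\oplus L$ along a nonzero character into the dual numbers produces a non-split self-extension, so $\Ext^1_{U(\mf l_n)}(L,L)\neq 0$. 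The paper's remark that there are no nontrivial extensions between the $L(\mf l_n,\mu)$ is a statement inside the category of weight modules, not in $U(\mf l_n)$-Mod. The two errors do not cancel: in the example your chain yields $\Ext^1_{U(\mf b)}(\C_0,\Delta(0))=\C$, whereas the proposition correctly gives $\Hom_{\h}(\C_0,H^1(\mf n;\Delta(0)))=0$. The standard repair --- and what the cited sources actually do --- is to run your Steps 1--3 in \emph{relative} homological algebra: replace $\Ext_{U(\G_n)}$ and $\Ext_{U(\mf p_n)}$ by $\Ext_{(\G_n,\mf l_n)}$ and $\Ext_{(\mf p_n,\mf l_n)}$ (equivalently, use $\h_n$- or $\mf l_n$-semisimple cochains, which is also how the restricted cohomology $H^i(\mf u_n;N)$ is set up for $n=\infty$); in that setting the identification with $\Ext_{\mc O_n}$ in degrees $\le 1$ and the vanishing of the outer $E_2$-terms both hold. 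A secondary point: for $n=\infty$ you propose to pass to the limit using compatibility of truncation with $\Ext$, but in the paper that compatibility (Proposition~\ref{CoHomologiesMatch}(ii)) is \emph{deduced from} the present proposition, so that route is circular as stated; only the compatibility with $\mf u$-cohomology (Proposition~\ref{CoHomologiesMatch}(i)) is available independently.
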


The first part of the following proposition is an analogue of \cite[Corollary 4.5]{CL10}.
\begin{prop}\label{CoHomologiesMatch}  Let $0\le k< n \le \infty$ and $M \in \mc{O}_n$.
\begin{itemize}
  \item[(i)]
 For $i \in \Z_+$, we have
$$
\mf{tr}^n_k(H^i({\mf{u}}_n,M))=  H^i({\mf{u}}_k, \mf{tr}_k^n(M)).
$$
In particular,  for $Z=L,\Delta,\nabla$ and $\gamma \in \mc{P}_n$, we have
\begin{align*} \mf{tr}^n_k(H^i({\mf{u}}_n,Z_n(\gamma))) =  \left\{ \begin{array}{ll}  H^i({\mf{u}}_k,Z_k(\gamma)),& \quad \text{  if $\gamma \in \mc{P}_k$};
\\ 0, & \quad \text{ otherwise}.
 \end{array} \right.
 \end{align*}
  \item[(ii)] For $M \in \mc{O}_n$ and $\mu\in \mc{P}_k$, we have an isomorphism
  \begin{align*}
   \mf{tr}_k^n: {\rm Ext}^i_{\mc O_{n}}(\Delta_n(\mu),M) \longrightarrow{\rm Ext}^i_{\mc O_{k}}(\Delta_k(\mu),\mf{tr}_k^n(M)), \,\,\,\, \text{ for $i=0,1$.}
  \end{align*}
\end{itemize}
\end{prop}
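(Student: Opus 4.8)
The plan is to prove both parts by exploiting the weight-space nature of the truncation functor $\mf{tr}^n_k$, which is exact and given simply by discarding all weight spaces outside $X_k$. For part (i), I would compute $u$-cohomology via the standard Chevalley--Eilenberg (restricted) complex $\Lambda^\bullet(\mf u_n^*)\otimes M$, whose cohomology is $H^i(\mf u_n;M)$. The key observation is that $\mf{tr}^n_k$ is compatible with this complex: since $\mf u_k$ is the ``tail'' portion of $\mf u_n$ and the truncation only retains weights in $X_k$, applying $\mf{tr}^n_k$ to the complex for $\mf u_n$ should yield exactly the complex for $\mf u_k$ computing $H^i(\mf u_k;\mf{tr}^n_k(M))$. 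Because $\mf{tr}^n_k$ is exact, it commutes with taking cohomology of a complex, giving $\mf{tr}^n_k(H^i(\mf u_n;M))=H^i(\mf u_k;\mf{tr}^n_k(M))$. The specialization to $Z=L,\Delta,\nabla$ then follows immediately from \lemref{lem:trunc}: $\mf{tr}^n_k(Z_n(\gamma))$ equals $Z_k(\gamma)$ when $\gamma\in\mc P_k$ and $0$ otherwise, and in the vanishing case the cohomology of the zero module is zero.

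For part (ii), I would combine part (i) with \propref{Prop::relative ext}. Applying that proposition at rank $n$ gives
\[
{\rm Ext}^i_{\mc O_n}(\Delta_n(\mu),M)\cong {\rm Hom}_{\mf l_n}\big(L(\mf l_n,\mu),H^i(\mf u_n;M)\big),
\]
and at rank $k$ gives
\[
{\rm Ext}^i_{\mc O_k}(\Delta_k(\mu),\mf{tr}^n_k(M))\cong {\rm Hom}_{\mf l_k}\big(L(\mf l_k,\mu),H^i(\mf u_k;\mf{tr}^n_k(M))\big),
\]
for $i=0,1$. By part (i) the right-hand cohomology spaces are related by truncation, so the task reduces to showing that $\mf{tr}^n_k$ induces an isomorphism on the relevant $\Hom$-spaces. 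Since $\mu\in\mc P_k$, the module $L(\mf l_k,\mu)$ is exactly the $\mf l_k$-component of $L(\mf l_n,\mu)$ surviving truncation, and any $\mf l_n$-homomorphism out of the highest-weight module $L(\mf l_n,\mu)$ is determined by its value on the $\mu$-weight space, which lies in $X_k$ and is thus preserved by $\mf{tr}^n_k$. I would check that truncation carries an $\mf l_n$-map to an $\mf l_k$-map bijectively, using that $L(\mf l_n,\mu)$ is generated by its highest-weight vector and that there are no nontrivial extensions between distinct irreducible $\mf l_n$-modules (noted just before \propref{Prop::relative ext}), so the $\Hom$ simply reads off the multiplicity of $L(\mf l_n,\mu)$ as a composition factor.

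The main obstacle I anticipate is the $n=\infty$ case in part (i): verifying that $\mf{tr}^n_k$ really intertwines the \emph{restricted} cohomology complexes when $\mf u_n$ is infinite-dimensional. One must confirm that truncation commutes with the restricted differential and that the restricted complex for $\mf u_\infty$, after discarding weights outside $X_k$, genuinely reduces to the finite-dimensional $\mf u_k$-complex rather than leaving residual contributions from the infinitely many positive roots in the ``head'' $\mf k$ or from cross terms between head and tail. Here I would lean on the description of restricted cohomology in \cite[\S 6.4.1]{CW12} and the weight-grading of the complex, arguing that any cochain contributing to a weight in $X_k$ involves only root vectors from $\mf u_k$. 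Once this compatibility is secured, both the exactness of $\mf{tr}^n_k$ and \propref{Prop::relative ext} do the remaining work formally.
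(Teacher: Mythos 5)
Your proposal is correct and follows essentially the same route as the paper: the paper's proof also rests on the single observation that truncating the (restricted, when $n=\infty$) complex computing ${\mf u}_n$-cohomology yields the complex computing ${\mf u}_k$-cohomology of $\mf{tr}_k^n(M)$, then deduces the second part of (i) from Lemma~\ref{lem:trunc} and part (ii) from Proposition~\ref{Prop::relative ext} together with (i). Your extra care about the $\Hom_{\mf l}$-spaces and the $n=\infty$ case only fills in details the paper leaves implicit.
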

\begin{proof}
	The proof is similar to the proof of \cite[Corollary 4.5]{CL10}. We first observe that the truncation of complex computing the (restricted, for $n=\infty$) ${\mf{u}}_n$-cohomology of any module $M\in {\mc O_{n}}$ is the complex computing the ${\mf{u}}_k$-cohomology of the truncated module $\mf{tr}_k^n(M)$. Therefore we prove the first part of (i). The second part of (i) is a direct consequence of \lemref{lem:trunc} and (ii) follows from \propref{Prop::relative ext} and (i).
\end{proof}

The following lemma is very useful. The proof is similar to the proof of \cite[Lemma 5.6]{CL10}.

\begin{lem} \label{lem::ShortInduceLong} Let $0\le k< n \le \infty$ and $M \in \mc{O}_n$.
Assume that $N \in \mc{O}_n$ and there is a short exact sequence
\begin{equation}\label{short:exact}
0\longrightarrow M_1\stackrel{}{\longrightarrow} M_2 {\longrightarrow} M_3\longrightarrow
0
\end{equation}
of $\mf g_n$-modules in $\mc{O}_n$. Let $N':=\mf{tr}_k^n(N)$ and $M_i':= \mf{tr}_k^n(M_i)$, for $ i =1,2, 3$. Then we have the following
commutative diagram of abelian groups with exact rows:
\begin{eqnarray*}
\CD
 0  @>>> {\rm
Hom}_{{\mc{O}_n}}({M}_3,N) @>>>{\rm Hom}_{\mc{O}_n}(M_2,N) @>>> {\rm Hom}_{\mc{O}_n}(M_1,N) \\
 @. @VV\mf{tr}_k^n V @VV\mf{tr}_k^n V @VV\mf{tr}_k^n V \\
 0  @>>> {\rm
Hom}_{\mc{O}_k}(M_3',N') @>>> {\rm
Hom}_{\mc{O}_k}(M_2',N') @>>> {\rm
Hom}_{\mc{O}_k}(M_1',N')  \\
 \endCD
\end{eqnarray*}
\begin{eqnarray*}
\hskip 0cm\CD
@>{\partial}>> {\rm Ext}^1_{\mc{O}_n}
(M_3,N) @>>>{\rm Ext}^1_{\mc{O}_n}
(M_2,N) @>>> {\rm Ext}^1_{\mc{O}_n}
(M_1,N)  \\
 @. @VV\mf{tr}_k^n V @VV\mf{tr}_k^n V @VV\mf{tr}_k^n V \\
 @>\partial>> {\rm Ext}^1_{\mc{O}_k}(M_3',N')
 @>>> {\rm Ext}^1_{\mc{O}_k}(M_2',N')
@>>> {\rm Ext}^1_{\mc{O}_k}(M_1',N')   \\
 \endCD
\end{eqnarray*}
where $\partial$ are the connecting homomorphisms.
\end{lem}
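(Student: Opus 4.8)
The plan is to deduce the lemma from the long exact sequences of Yoneda $\Ext$-groups attached to the short exact sequence \eqnref{short:exact} in each of the two categories $\mc{O}_n$ and $\mc{O}_k$, together with the fact (recorded above) that $\mf{tr}_k^n$ is exact. First I would apply the contravariant functor $\Hom_{\mc{O}_n}(-,N)$ to \eqnref{short:exact}. Since $\Ext$ is taken in the sense of Yoneda, this yields the long exact sequence forming the top row of the claimed diagram, beginning with $0\to\Hom_{\mc{O}_n}(M_3,N)$ and continuing through the $\Ext^1$ terms via the connecting homomorphism $\partial$. Next I would observe that, because $\mf{tr}_k^n$ is exact, the image sequence
\begin{equation*}
0\longrightarrow M_1'\longrightarrow M_2'\longrightarrow M_3'\longrightarrow 0
\end{equation*}
is again short exact in $\mc{O}_k$; applying $\Hom_{\mc{O}_k}(-,N')$ to it produces the bottom row of the diagram. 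Thus both rows are genuine long exact sequences, and what remains is to construct the vertical maps labelled $\mf{tr}_k^n$ and to verify that every square commutes.

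To build the vertical arrows I would use the functoriality of truncation directly on representatives. On the $\Hom$ level the map $\phi\mapsto\mf{tr}_k^n(\phi)$ is the obvious restriction of morphisms, which is well defined because $\mf{tr}_k^n$ is a functor. On the $\Ext^1$ level, an element is represented by an extension $0\to N\to E\to M_i\to 0$ in $\mc{O}_n$; applying the exact functor $\mf{tr}_k^n$ gives an extension $0\to N'\to \mf{tr}_k^n(E)\to M_i'\to 0$ in $\mc{O}_k$, and this assignment descends to Yoneda equivalence classes since $\mf{tr}_k^n$ preserves the pullback and pushout constructions used to define the equivalence. Commutativity of the two leftmost pairs of squares (those involving only $\Hom$'s) is immediate from functoriality. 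The squares adjacent to $\partial$ commute because the connecting homomorphism is itself natural in the short exact sequence: the class $\partial(\phi)$ is obtained by pulling back \eqnref{short:exact} along $\phi$, and since $\mf{tr}_k^n$ commutes with pullbacks this is compatible with the corresponding construction in $\mc{O}_k$.

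The main obstacle, and the only place where genuine input beyond formal homological algebra enters, is verifying that $\mf{tr}_k^n$ respects the Yoneda operations (pullback, pushout, and splicing) in the ranges $i=0,1$ relevant here; this is exactly what guarantees that the vertical maps are well defined on equivalence classes and that the $\partial$-squares commute. This follows from exactness of $\mf{tr}_k^n$, which is already recorded in the excerpt, so the verification is routine but must be stated. Following the pattern of \cite[Lemma 5.6]{CL10}, I would carry out these checks in the same manner, noting that for $n=\infty$ one works with the restricted cohomology and that all $\Hom$- and $\Ext$-spaces in sight are finite-dimensional by the weight-space finiteness built into $\mc{O}_n$, so no convergence issues arise. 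Assembling the two long exact sequences with the compatible vertical maps then yields the asserted commutative diagram.
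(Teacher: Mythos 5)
Your proposal is correct and follows essentially the same route as the paper, which simply invokes the argument of \cite[Lemma 5.6]{CL10}: exactness of $\mf{tr}_k^n$ gives the truncated short exact sequence, functoriality gives the vertical maps on $\Hom$ and on Yoneda extension classes, and preservation of pullbacks and pushouts gives commutativity of all squares including those at $\partial$. (One minor slip: for the contravariant $\Hom(-,N)$ sequence, $\partial(\phi)$ is the \emph{pushout} of \eqref{short:exact} along $\phi\colon M_1\to N$, not a pullback; the compatibility with truncation holds all the same.)
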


\subsection{Graded algebras}\label{subsection::KRandGM}  We set up some notations for graded algebras and recall the definition of  Koszul algebra. We refer the reader to \cite{BGS96, BS1, BS2} for more details.

For an abelian category $\mc{A}$ and $i\ge 0$,  let ${\rm Ext}^i_{\mc{A}}(M ,N)$ denote the $i$-th extension group of $N$ by $M$ in  $\mc{A}$ in the sense of Yoneda (see, for example,~\cite[Chapter VII]{Mit65}) and let ${\rm Ext}^\bullet_{\mc{A}}(M ,M):=\bigoplus_{i\ge 0}{\rm Ext}^i_{\mc{A}}(M ,M)$ denote the extension algebra for $M\in\mc{A}$

An algebra always means an associative (but not necessarily unital) $\C$-algebra. The opposite algebra of an algebra $A$ is denoted by $A^{\rm op}$. An algebra $A$ is called {\em locally unital} (see, e.g., \cite[Section 5]{BS1}) if there is a system $\{e_\lambda \,|\, \lambda\in \Lambda\}$ of mutually orthogonal idempotents such that
$$A =\sum_{\alpha, \beta\in \Lambda} e_\alpha A e_\beta.$$
Note that $A$ is an unital algebra with identity element $\sum_{\alpha\in \Lambda} e_\alpha$ if $A$ is finite-dimensional over $\C$. For our applications, we only consider positively graded locally unital algebra $A= \bigoplus_{i\ge 0}A_i$ with $A_0 = \bigoplus_{\alpha \in \Lambda}\mathbb{C}e_{\alpha}$. For a locally unital algebra $A$, an $A$-module $M$ always means a left $A$-module $M$ in the usual sense
such that
$$M =\bigoplus_{\alpha\in \Lambda} e_\alpha M.$$
For a  locally unital (resp. graded) algebra $A$, we let $A$-Mod (resp. $A$-mod) denote the category of all (resp. graded) left $A$-modules and let Hom$_{A}(M, N)$ (resp. hom$_{A}(M, N)$) denote the set of $A$-homomorphisms (resp. homogenous $A$-homomorphisms of degree zero) from $M$ to $N$ for $M, N\in A$-Mod (resp. $A$-mod).  For any $i\in \Z$ and $M \in A$-mod with $M =\bigoplus_{j\in \Z}M_j$, we write $M\langle i\rangle$ for the same module but with new grading defined by $M\langle i\rangle_j:= M_{j-i}$ for all $j\in \Z$.
For $i\ge 0$, the group ${\rm Ext}^i_{A\text{-Mod}}(M ,N)$ (resp. ${\rm ext}_{A\text{-mod}}^i(M ,N)$) is denoted by ${\rm Ext}^i_A(M ,N)$ (resp. ${\rm ext}_A^i(M ,N)$). Let $A$-Mof and $A$-mof denote the subcategories of finitely generated $A$-modules in $A$-Mod and $A$-mod, respectively.  Finally, we define Mod-$A$, mod-$A$, Mof-$A$ and mof-$A$ to be the categories of right $A$-modules in a similar manner. For $i\ge 0$, the $i$-th extensions of $N$ by $M$ in the Mod-$A$ (resp. mod-$A$) is denoted by ${\rm Ext}^i_{\text{-}A}(M ,N)$ (resp. ${\rm ext}_{\text{-}A}^i(M ,N)$).

Let $A_{>0}:= \bigoplus_{i\geq 1}A_k$ for any positively graded $\C$-algebra $A = \bigoplus_{i\geq 0}A_i$.
Recall that a positive graded algebra $A$ is called a {\em Koszul algebra} (see, e.g., \cite[Definition 1.1.2]{BGS96}) if $A_0$ is a semi-simple algebra and the graded left $A$-module $A_0\cong A/A_{> 0}$ admits a graded projective resolution in $A$-mod
\begin{align*}
\cdots \longrightarrow P^2\longrightarrow P^1 \longrightarrow P^0 \longrightarrow A_0 \longrightarrow 0,
\end{align*}
such that $P^i$ is generated by its $i$-th component, that is, $P^i = AP^i_i$, for each $i\in \Z_+$. Such a grading on the algebra $A$ is referred to as a {\em Koszul grading}.

Let $\mc{A}$ be an abelian $\C$-linear category such that every simple object $L_\mu$ has a projective cover $P_\mu$. Let $\{L_\mu\,|\, \mu\in \Lambda\}$ be a complete set of pairwise non-isomorphic simple objects. Assume that $\mc{A}$ is a full subcategory of an abelian $\C$-linear category $\mc{C}$ such that $\bigoplus_{\mu\in \Lambda}P_\mu$ is in $\mc{C}$. For $\gamma \in\Lambda$, let $e_{\gamma} \in {\rm End}_{\mc{C}}(\bigoplus_{\mu\in \Lambda}P_\mu)$ be the natural projection onto $P_\gamma$. The category $\mc{A}$ is called {\em Koszul} if the locally unital algebra $(\bigoplus_{\mu, \gamma\in  \Lambda}e_{\mu}{\rm End}_{\mc{C}}(\bigoplus_{\mu'\in \Lambda}P_{\mu'})e_{\gamma})^{\rm op}$ is Koszul. Note that an algebra $A$ is Koszul if and only if $A^{\rm op}$ is Koszul (see, for example, \cite[proposition 2.2.1]{BGS96}).

\section{Truncation functors and projective modules} \label{Sect::3}
In this section, we show that there are a projective cover and an injective envelope of each irreducible module in $\mc O_n$ for $n\in \Ninf$. First, we consider the full subcategory $\mc O^{\Delta}_n$ of $\mc{O}_n$ consisting of modules equipped with finite Verma flags and show that certain subcategory of $ \mc O^{\Delta}_{n}$ is equivalent to the category $\mc O^{\Delta}_k$ for $n\ge k$ (See \thmref{Thm::Equivlanece} below). Using the equivalence of categories and the existence of projective covers in $\mc O_n$ for $n\in \N$, we are able to show the existence of projective covers in $\mc O$.

For  $n\in \N\cup\{\infty\}$ and $M \in \mc O_n$, let
$$[M : L_n(\mu)]:={\rm dim}\, \Hom_{\G_n}(P_n(\mu), M), \quad \text{for all $\mu\in \mc{P}_n$.}$$  Note that $[M : L_n(\mu)]$ is equal to the supremum of the multiplicity of $ L_n(\mu)$ in the subquotients of $M$ with finite composition series and $[M : L_n(\mu)]$ is finite number since every module in $\mc O_n$ has finite dimensional weight spaces.
 We say that $M\in \mc O_n^-$ (resp. $\mc O_n$) has a finite {\em Verma flag} if there is filtration
\begin{align*}
0={M}_{r}\subset {M}_{r -1} \subset\cdots \subset M_0 =M,
\end{align*}
satisfying $M_{i}/M_{i+1} \cong \Delta_n(\gamma^i)$ and $\gamma^i\in \mcP^-_n$ (resp. $\mc{P}_n$), for $0\leq i\leq r-1$. Let $\mc{O}^{-,{\Delta}}_n$ (resp. $\mc{O}^{\Delta}_n$) denote the full subcategory of $\mc{O}_{n}$ consisting of all $\G_n$-modules possessing finite Verma flags. For $M \in \mc{O}^{-,{\Delta}}_n$ (resp. $\mc O_n^{\Delta}$) and $\mu \in  \mcP^-_n$, let $(M : \Delta_n(\mu))$ denote the multiplicity of Verma module $\Delta_n(\mu)$ occurring in a Verma
flag of $M$. Using the similar arguments as in the finite rank
cases, $(M : \Delta(\mu))$ is independent of the choices of the Verma flags.

For $n\in \N\cup\{\infty\}$ and $\ga\in \mc{P}_n$, let $P_n(\ga)$ (resp. $I_n(\mu)$) be the projective cover (resp. injective envelope) of $L_n(\gamma)$ in $\mc O_n$ if it exists.

\subsection{Modules equipped with Verma flags}\label{Verma flag}
In this subsection, we consider the category $\mc O^{\Delta}_n$ and the full subcategory $ \mc O^{\Delta}_{n, k}$ of $\mc O^{\Delta}_n$  for $n\ge k$.  We establish the crucial equivalence of the category  $ \mc O^{\Delta}_{n, k}$ and the category $\mc O^{\Delta}_k$ induced by truncation functor for $n\ge k$ (See \thmref{Thm::Equivlanece} below).

Let $\mc{O}^{\Delta}_{n,k}$ denote the full subcategory of  $\mc O^{\Delta}_{n}$ of all $\mf g_n$-modules $M$ with a finite Verma flag
\begin{align*}
0={M}_{r}\subset {M}_{r -1} \subset\cdots \subset M_0 =M,
\end{align*} satisfying
$M_{i}/M_{i+1} \cong \Delta_n(\gamma^i)$ and $\gamma^i\in\mc{P}_k$, for $0\leq i\leq r-1$.
The following proposition is a  strengthened version of the second part of \propref{CoHomologiesMatch}.

\begin{prop} \label{stExt}  For $0\le k< n \le \infty$, $M\in \mc O^{\Delta}_{n,k}$ and $N \in \mc O_{n}$, we have
\begin{itemize}
  \item[(i)] $ \mf{tr}_k^n: {\rm Hom}_{\mc O_{n}}(M,N) \longrightarrow {\rm Hom}_{\mc O_{k}}(\mf{tr}_k^n(M),\mf{tr}_k^n(N))$ is an isomorphism.
  \item[(ii)] $\mf{tr}_k^n: {\rm Ext}^1_{\mc O_{n}}(M,N) \longrightarrow {\rm Ext}^1_{\mc O_{k}}(\mf{tr}_k^n(M),\mf{tr}_k^n(N))$ is a monomorphism.
\end{itemize}
\end{prop}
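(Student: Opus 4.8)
The plan is to induct on the length $r$ of a Verma flag of $M\in\mc{O}^{\Delta}_{n,k}$, establishing (i) and (ii) simultaneously. The base case $r=1$ is precisely \propref{CoHomologiesMatch}(ii): when $M=\Delta_n(\ga)$ with $\ga\in\mc{P}_k$, \lemref{lem:trunc} gives $\mf{tr}_k^n(\Delta_n(\ga))=\Delta_k(\ga)$, and \propref{CoHomologiesMatch}(ii) asserts that $\mf{tr}_k^n$ is an isomorphism on $\Ext^i$ for $i=0,1$. Thus (i) holds, and (ii) holds with an isomorphism, in particular a monomorphism.

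For the inductive step I choose a submodule $M_1\subset M$ with $M/M_1\cong\Delta_n(\ga^0)$, $\ga^0\in\mc{P}_k$, so that $M_1\in\mc{O}^{\Delta}_{n,k}$ carries a Verma flag of length $r-1$. Feeding the short exact sequence $0\to M_1\to M\to\Delta_n(\ga^0)\to 0$ and the module $N$ into \lemref{lem::ShortInduceLong} produces a commutative ladder whose rows are the contravariant long exact sequences in $\Hom$ and $\Ext^1$ and whose vertical maps are $\mf{tr}_k^n$; the bottom row is the analogous sequence for the truncated modules against $N':=\mf{tr}_k^n(N)$, where we use exactness of $\mf{tr}_k^n$ together with $\mf{tr}_k^n(\Delta_n(\ga^0))=\Delta_k(\ga^0)$. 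By the base case, the vertical maps at $\Hom(\Delta_n(\ga^0),N)$ and $\Ext^1(\Delta_n(\ga^0),N)$ are isomorphisms; by the inductive hypothesis, the maps at $\Hom(M_1,N)$ and $\Ext^1(M_1,N)$ are an isomorphism and a monomorphism, respectively.

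To obtain (i) I apply the five lemma to
\[
0\to\Hom(\Delta_n(\ga^0),N)\to\Hom(M,N)\to\Hom(M_1,N)\to\Ext^1(\Delta_n(\ga^0),N),
\]
where all four outer vertical maps are isomorphisms, forcing the central map $\mf{tr}_k^n\colon\Hom_{\mc{O}_n}(M,N)\to\Hom_{\mc{O}_k}(\mf{tr}_k^n(M),N')$ to be an isomorphism. To obtain (ii) I apply the sharp four lemma to
\[
\Hom(M_1,N)\to\Ext^1(\Delta_n(\ga^0),N)\to\Ext^1(M,N)\to\Ext^1(M_1,N),
\]
in which the first vertical map is an epimorphism (indeed an isomorphism), the second is a monomorphism (indeed an isomorphism), and the fourth is a monomorphism; the four lemma then gives that the central map $\mf{tr}_k^n\colon\Ext^1_{\mc{O}_n}(M,N)\to\Ext^1_{\mc{O}_k}(\mf{tr}_k^n(M),N')$ is a monomorphism, completing the induction.

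The genuinely new content is already carried by \propref{CoHomologiesMatch}(ii) and \lemref{lem::ShortInduceLong}, so the remainder is diagram chasing. The point demanding the most care is that the two statements must be proved by a single simultaneous induction: the four-lemma step for (ii) at length $r$ consumes the monomorphism (ii) for $M_1$ at length $r-1$, while the five-lemma step for (i) consumes (i) for $M_1$; decoupling them would break the argument. One must also verify that the epimorphism/monomorphism hypotheses sit in the correct columns of the four lemma, since this is the only place where a slip in the iso/mono/epi pattern could occur.
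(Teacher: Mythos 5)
Your proof is correct and coincides with the paper's own argument: both induct on the length of the Verma flag, split off the top quotient $M/M_1\cong\Delta_n(\gamma^0)$, feed the short exact sequence into \lemref{lem::ShortInduceLong}, and conclude via the Five Lemma for (i) and the Four Lemma for (ii), with \propref{CoHomologiesMatch}(ii) supplying the Verma-module case. Your explicit bookkeeping of which columns must be epi/mono for the Four Lemma is exactly the check the paper leaves implicit.
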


\begin{proof} We shall proceed with our proof by induction on the length of the Verma flag of $M$. If the length of the the Verma flag of $M$ is $1$, then $M$ is isomorphic to a Verma module and hence the proposition holds  by \propref{CoHomologiesMatch}(ii). Assume it is true for all module $M\in\mc{O}_{n,k}^\Delta$ with the length of the Verma flag less than $r-1$.
Let  $0={M}_{r}\subset {M}_{r -1} \subset\cdots \subset M_0 =M$ be a finite filtration of $M$
 satisfying
$M_{i}/M_{i+1} \cong \Delta_n(\gamma^i)$ and $\gamma^i\in\mc{P}_k$, for $0\leq i\leq r-1$. We apply \lemref{lem::ShortInduceLong} to the module $N$ and the short exact sequence
$$
0\longrightarrow M_1 \longrightarrow M \longrightarrow M/M_1 \longrightarrow 0.
$$
Note that
the first and third vertical homomorphisms are isomorphisms by induction, the fourth vertical homomorphism is also an isomorphism by \propref{CoHomologiesMatch}(ii), and sixth vertical homomorphism is a monomorphism by induction. By Five Lemma and Four Lemma, the second vertical homomorphism is an isomorphism and the fifth vertical homomorphism is a monomorphism. The proof is completed.
\end{proof}

In view of \lemref{lem:trunc}, we may conclude that the restriction of truncation functor $\mf{tr}^n_k$ induces an exact functor from $\mc O^{\Delta}_{n,k}$ to $\mc O^{\Delta}_k$, which is also denoted by $\mf{tr}_{k}^n$.

\begin{thm} \label{Thm::Equivlanece} For $0\le k< n \le \infty$, the truncation functor
 $$ \mf{tr}_{k}^n :  \mc O^{\Delta}_{n, k}\longrightarrow  \mc O^{\Delta}_k$$ is an equivalence.
\end{thm}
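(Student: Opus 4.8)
The plan is to establish that $\mf{tr}^n_k$ is an equivalence by exhibiting an explicit quasi-inverse and verifying that the unit and counit of the resulting adjunction are natural isomorphisms. The natural candidate for the quasi-inverse is the functor sending a module in $\mc{O}^\Delta_k$ to a module in $\mc{O}^\Delta_{n,k}$ with the ``same'' Verma flag, i.e. the functor $F$ determined on objects by $F(\Delta_k(\gamma)) = \Delta_n(\gamma)$ for $\gamma \in \mc{P}_k$, extended to arbitrary Verma-flag modules. First I would use \lemref{lem:trunc}, which already guarantees $\mf{tr}^n_k(\Delta_n(\gamma)) = \Delta_k(\gamma)$ for $\gamma \in \mc{P}_k$, so that $\mf{tr}^n_k$ sends the standard objects of $\mc{O}^\Delta_{n,k}$ bijectively onto the standard objects of $\mc{O}^\Delta_k$.

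The technical heart is to promote this bijection-on-standards to a genuine equivalence, and the key tool is \propref{stExt}. Fullness and faithfulness of $\mf{tr}^n_k$ on $\mc{O}^\Delta_{n,k}$ follow immediately from \propref{stExt}(i): for $M \in \mc{O}^\Delta_{n,k}$ and $N \in \mc{O}_n$ the map ${\rm Hom}_{\mc{O}_n}(M,N) \to {\rm Hom}_{\mc{O}_k}(\mf{tr}^n_k(M), \mf{tr}^n_k(N))$ is an isomorphism, and in particular this holds when $N$ also lies in $\mc{O}^\Delta_{n,k}$. So the only remaining issue is essential surjectivity. Here I would argue by induction on the length $r$ of the Verma flag of a given $\ov{M} \in \mc{O}^\Delta_k$. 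The base case $r=1$ is $\ov{M} \cong \Delta_k(\gamma) = \mf{tr}^n_k(\Delta_n(\gamma))$, which is in the image. For the inductive step, write a short exact sequence
\begin{equation*}
0 \longrightarrow \Delta_k(\gamma) \longrightarrow \ov{M} \longrightarrow \ov{M}' \longrightarrow 0
\end{equation*}
with $\ov{M}'$ of flag length $r-1$; by induction $\ov{M}' = \mf{tr}^n_k(M')$ for some $M' \in \mc{O}^\Delta_{n,k}$. The extension $\ov{M}$ corresponds to a class in ${\rm Ext}^1_{\mc{O}_k}(\ov{M}', \Delta_k(\gamma))$, and I must show it is realized by an extension of $M'$ by $\Delta_n(\gamma)$ upstairs.

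The main obstacle is precisely this lifting of the extension class, because \propref{stExt}(ii) only asserts that $\mf{tr}^n_k$ on ${\rm Ext}^1$ is a \emph{monomorphism}, not an isomorphism — so a priori the downstairs class need not come from upstairs. The way around this is to establish surjectivity of the relevant ${\rm Ext}^1$ map by a dimension or exactness count: I would compare ${\rm Ext}^1_{\mc{O}_n}(M', \Delta_n(\gamma))$ with ${\rm Ext}^1_{\mc{O}_k}(\ov{M}', \Delta_k(\gamma))$ using the long exact sequences of \lemref{lem::ShortInduceLong} together with \propref{CoHomologiesMatch}, reducing via the Verma flag of $M'$ to the case where the first argument is itself a standard module, where ${\rm Ext}^1_{\mc{O}_n}(\Delta_n(\delta), \Delta_n(\gamma)) \to {\rm Ext}^1_{\mc{O}_k}(\Delta_k(\delta), \Delta_k(\gamma))$ can be analyzed directly through ${\rm Hom}_{\mf{l}}$ and $\mf{u}$-cohomology. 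Granting that every downstairs extension of $\ov{M}'$ by $\Delta_k(\gamma)$ lifts, I obtain $M \in \mc{O}^\Delta_{n,k}$ with $\mf{tr}^n_k(M) \cong \ov{M}$, completing essential surjectivity. Combined with full faithfulness, this shows $\mf{tr}^n_k$ is an equivalence. The exactness of $\mf{tr}^n_k$, already noted in the paragraph preceding the statement, ensures everything is compatible with the abelian structure.
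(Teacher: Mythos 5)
Your overall skeleton matches the paper's: full faithfulness is exactly \propref{stExt}, and essential surjectivity is proved by induction on the length of the Verma flag by lifting an extension class. The problem is the way you set up the inductive step. You peel off the \emph{bottom} standard subquotient, writing $0 \to \Delta_k(\gamma) \to \ov{M} \to \ov{M}' \to 0$, which places the class to be lifted in ${\rm Ext}^1_{\mc{O}_k}(\ov{M}', \Delta_k(\gamma))$ with a general Verma-flag module in the first argument. There, as you yourself note, \propref{stExt}(ii) only gives injectivity of $\mf{tr}^n_k$, and your proposed repair --- a d\'evissage along the flag of $M'$ using \lemref{lem::ShortInduceLong} --- does not close: to extract surjectivity at the ${\rm Ext}^1(M',\cdot)$ spot of that ladder via the four lemma you would need control of the next term, i.e.\ injectivity on ${\rm Ext}^2$, which \lemref{lem::ShortInduceLong} does not provide (and \propref{Don}, which would settle it, is proved later using projective covers whose existence depends on this very theorem). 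So as written the essential surjectivity step has a genuine gap.

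The fix is simply to flip the decomposition. A Verma flag $0=M_r\subset\cdots\subset M_0=\ov{M}$ automatically has its \emph{top} quotient $\ov{M}/M_1\cong\Delta_k(\gamma^0)$ standard, so one should use $0 \to M_1 \to \ov{M} \to \Delta_k(\gamma^0) \to 0$. The extension class then lies in ${\rm Ext}^1_{\mc{O}_k}(\Delta_k(\gamma^0), M_1)$, with the standard module in the \emph{first} argument, and \propref{CoHomologiesMatch}(ii) gives that
$\mf{tr}^n_k\colon {\rm Ext}^1_{\mc{O}_n}(\Delta_n(\gamma^0), N_1)\to {\rm Ext}^1_{\mc{O}_k}(\Delta_k(\gamma^0), M_1)$
is an isomorphism (where $\mf{tr}^n_k(N_1)\cong M_1$ by induction); the class therefore lifts at once, and exactness of $\mf{tr}^n_k$ together with \lemref{lem::ShortInduceLong} identifies the truncation of the lifted extension with $\ov{M}$. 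This is precisely the paper's argument, and it makes the surjectivity issue you flagged disappear rather than requiring it to be solved.
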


\begin{proof}
By \propref{stExt}, $ \mf{tr}_{k}^n$  is a fully faithful  functor from $\mc O^{\Delta}_{n, k}$ to $\mc O^{\Delta}_k$.
We shall use induction on the length of the Verma flag of a given module in $\mc O^{\Delta}_k$ to show that $\mf{tr}_{k}^n$ is essentially surjective. Let $M\in \mc{O}_k^\Delta$ be a Verma module, then by \lemref{lem:trunc} M is isomorphic to an image of a Verma module under the functor $\mf{tr}_{k}^n$ . 
 Assume it is true for all module $M\in\mc{O}_k^\Delta$ with the length of the Verma flag less than $r-1$.
Let $M\in\mc{O}_k^\Delta$ with finite filtration of $\mf g_k$-modules:
\begin{align*}
0={M}_{r}\subset {M}_{r -1} \subset\cdots \subset M_0 =M,
\end{align*} satisfying
$M_{i}/M_{i+1} \cong \Delta_k(\gamma^i)$ and $\gamma^i\in\mc{P}_k$, for $0\leq i\leq r-1$.
By induction hypothesis, there are $N_1, N_2 \in \mc O_{n,k}^{\Delta}$ such that $\mf{tr}_k^n(N_1)\cong M_1$ and $\mf{tr}_k^n(N_2) \cong M/M_1$. Now $M$ is isomorphic to $\mf{tr}_k^n(N)$ for some $N\in\mc{O}_{n,k}^\Delta$ by \propref{CoHomologiesMatch}\,(ii).
\end{proof}


\subsection{Projective modules} In this subsection, we show that there are a projective cover and an injective envelope of each irreducible module in $\mc O_n$ for $n\in \Ninf$.

For $n\in \N\cup\{\infty\}$, there is a partial ordering on  $\mu, \la\in \mcP^-_n$ by declaring $\mu\le \la$, if
$\la-\mu$ can be written as a nonnegative integral linear combination of simple roots of $\G_n$.

\begin{lem}\label{le}
For $n\in \N$,  $\gamma\in \mcP^-_n\backslash \mc{P}_n$ and $\mu \in \mc{P}_n$, we have $\gamma\nleqslant\mu$.
\end{lem}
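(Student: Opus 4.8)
The plan is to reduce the statement to elementary inequalities among the $\epsilon$-coordinates of $\gamma$ and $\mu$ and then extract a contradiction from the single coordinate $\epsilon_n$. Writing $\gamma=\sum_{i=-m}^{n}\gamma_i\epsilon_i+d_\gamma\La_0$ and $\mu=\sum_{i=-m}^{n}\mu_i\epsilon_i+d_\mu\La_0$, I first fix the dictionary between weights and coordinates: since $\{\epsilon_i\}$ is dual to $\{E_r\}$ and $\La_0(E_r)=0$, we have $\gamma_j=\gamma(E_j)$ and $\mu_j=\mu(E_j)$ for $1\le j\le n$. I also note that every root is annihilated by the central $K$, so simple roots have no $\La_0$-component and the $\epsilon_n$-coefficient of $\mu-\gamma$ is exactly $\mu_n-\gamma_n$.

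Next I would record the two structural consequences of the hypotheses. Because $Y_n\supseteq\{\beta_i\mid 1\le i<n\}$ and $\gamma\in\mcP^-_n$, the explicit realization gives $\gamma(\beta_i^\vee)=\gamma_i-\gamma_{i+1}\in\Z_+$ for $1\le i<n$, so the tail coordinates are non-increasing: $\gamma_1\ge\gamma_2\ge\cdots\ge\gamma_n$. On the other hand $\gamma\notin\mc{P}_n$ means $\gamma_j=\gamma(E_j)<0$ for some $1\le j\le n$, and monotonicity then forces $\gamma_n\le\gamma_j<0$. Meanwhile $\mu\in\mc{P}_n$ gives $\mu_n=\mu(E_n)\ge 0$.

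The key step is a sign computation on the simple roots of $\Pi(\G_n)$: I claim each of them has non-positive $\epsilon_n$-coefficient. Indeed, the roots in $\Pi(\mf{k})$ lie in the span of $\epsilon_{-m},\dots,\epsilon_{-1}$ and so have $\epsilon_n$-coefficient $0$; the root $\beta_\times=\epsilon_{-1}-\epsilon_1$ has $\epsilon_n$-coefficient $-1$ when $n=1$ and $0$ otherwise; and among the $\beta_j=\epsilon_j-\epsilon_{j+1}$ with $1\le j<n$ the only one with nonzero $\epsilon_n$-coefficient is $\beta_{n-1}$, whose coefficient is $-1$. Assuming for contradiction that $\gamma\le\mu$, so that $\mu-\gamma=\sum_{\alpha}c_\alpha\alpha$ with $c_\alpha\in\Z_+$, comparing $\epsilon_n$-coefficients then yields $\mu_n-\gamma_n=\sum_\alpha c_\alpha\,(\epsilon_n\text{-coefficient of }\alpha)\le 0$, i.e.\ $\mu_n\le\gamma_n$. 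This contradicts $\mu_n\ge 0>\gamma_n$, which would complete the proof.

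I do not expect a serious obstacle, as the argument is purely combinatorial once the weight–coordinate dictionary is fixed. The only points needing a little care are the identity $\gamma(\beta_i^\vee)=\gamma_i-\gamma_{i+1}$, which rests on the explicit matrix realization (the central term in $\iota$ contributing trivially on the tail coroots), and the degenerate case $n=1$, where monotonicity is vacuous but the hypothesis $\gamma\notin\mc{P}_1$ directly gives $\gamma_1<0$ while $\beta_\times$ still carries the only nonzero $\epsilon_1$-coefficient.
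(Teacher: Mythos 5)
Your proof is correct and follows essentially the same route as the paper: the paper's own proof observes that $\gamma(E_n)<0$ while $\mu(E_n)\ge 0$, and that every nonnegative integral combination of simple roots evaluates non-positively on $E_n$, which is exactly your $\epsilon_n$-coefficient computation. The only difference is that you spell out the two points the paper leaves implicit — the monotonicity $\gamma_1\ge\cdots\ge\gamma_n$ forcing $\gamma(E_n)<0$, and the case-by-case check of the $\epsilon_n$-coefficients of $\Pi(\G_n)$ — both of which are correct.
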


\begin{proof}  Since $\gamma\in \mcP^-_n\backslash \mc{P}_n$, we have $\gamma(E_n)<0$ and hence $(\mu-\gamma)(E_n)>0$. On other hand, the value of each nonnegative integral linear combination of simple roots on $E_n$ is a non-positive number. Therefore $\gamma\nleqslant\mu$.
\end{proof}

\begin{prop}\label{Oproj-finite} For $n\in \N$ and $\mu \in \mc{P}_n$, there are a projective cover $P_n(\mu)$ and an injective envelope $I_n(\mu)$ of $L_n(\mu)$ in $\mc O_n$. Moreover, we have
\begin{equation}\label{BGGre}
 (P_n(\la): \Delta_n(\mu)) = [\Delta_n(\mu): L_n(\la)], \quad \text{for $\la, \mu \in \mc{P}_n$.}
\end{equation}
Futhermore, we have
$$
\mf{tr}_{k}^{n}(P_{n}(\mu))\cong P_k(\mu),\,\,\,\, \text{and}\,\,\,\,  \mf{tr}_{k}^{n}(I_{n}(\mu))\cong I_k(\mu) \quad \text {for $0\le k< n < \infty$ and $\mu\in \mc{P}_k$}.
$$
\end{prop}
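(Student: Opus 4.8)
The plan is to treat the statement in three parts. For the category $\mc O_n$ with $n\in\N$ finite, recall from the excerpt that $\mc O_n^- = (\mc O_n)_f^-$ is the usual parabolic BGG category, which has enough projectives and injectives. The subtlety is that we work in $\mc O_n$ rather than $\mc O_n^-$, and the irreducibles in $\mc O_n$ are indexed by the smaller set $\mc P_n \subsetneq \mcP_n^-$. First I would show that the projective cover $P_n(\mu)$ of $L_n(\mu)$ in $\mc O_n^-$ for $\mu\in\mc P_n$ actually lies in $\mc O_n$, i.e.\ that all its Verma subquotients $\Delta_n(\gamma)$ have $\gamma\in\mc P_n$. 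This is exactly where \lemref{le} enters: by BGG reciprocity in $\mc O_n^-$, the multiplicity $(P_n(\mu):\Delta_n(\gamma))$ equals $[\Delta_n(\gamma):L_n(\mu)]$, which is nonzero only if $\mu\le\gamma$; but if $\gamma\in\mcP_n^-\setminus\mc P_n$ then \lemref{le} gives $\gamma\nleqslant\mu$, and I would need the companion inequality forcing $\mu\le\gamma\Rightarrow\gamma\in\mc P_n$ (the same $E_n$-weight argument applied in the other direction). Hence $P_n(\mu)\in\mc O_n^\Delta$, and it serves as the projective cover in $\mc O_n$ as well; the BGG reciprocity formula \eqnref{BGGre} is then inherited. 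The injective envelope $I_n(\mu)$ is handled by applying the duality functor $\vee$, which is exact, fixes each irreducible, and swaps projectives with injectives.

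Next I would establish the compatibility with truncation, $\mf{tr}_k^n(P_n(\mu))\cong P_k(\mu)$ for $\mu\in\mc P_k$. The idea is that truncation is exact and, by \lemref{lem:trunc}, sends $\Delta_n(\gamma)$ to $\Delta_k(\gamma)$ when $\gamma\in\mc P_k$ and to $0$ otherwise. Applying $\mf{tr}_k^n$ to a Verma flag of $P_n(\mu)$ therefore produces a module with a Verma flag in $\mc O_k^\Delta$, so $\mf{tr}_k^n(P_n(\mu))\in\mc O_k^\Delta$. To identify it with $P_k(\mu)$ I would verify projectivity and the correct head: using \propref{stExt} or \propref{CoHomologiesMatch}(ii), for any $N\in\mc O_k$ one computes
\[
{\rm Ext}^1_{\mc O_k}\big(\mf{tr}_k^n(P_n(\mu)),N\big),
\]
and I would compare this with the vanishing of ${\rm Ext}^1_{\mc O_n}(P_n(\mu),-)$; since $P_n(\mu)\in\mc O_{n,k}^\Delta$ and $\mf{tr}_k^n$ restricted to $\mc O_{n,k}^\Delta$ is the equivalence of \thmref{Thm::Equivlanece}, projectivity of $P_n(\mu)$ transports to projectivity of its image. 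Finally, a head/top computation via ${\rm Hom}$ shows the image has simple top $L_k(\mu)$, pinning it down as $P_k(\mu)$. The injective statement again follows by duality, since $\mf{tr}_k^n$ commutes with $\vee$.

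The main obstacle I anticipate is the first step: ensuring that the projective cover computed in the ambient category $\mc O_n^-$ really descends to $\mc O_n$, i.e.\ that no ``forbidden'' Verma factor $\Delta_n(\gamma)$ with $\gamma\in\mcP_n^-\setminus\mc P_n$ appears. This is genuinely a statement that the block structure respects the condition $\la(E_j)\in\Z_+$, and it rests on the directional $E_n$-weight argument of \lemref{le} together with BGG reciprocity; the delicate point is that one needs both implications ($\gamma\nleqslant\mu$ when $\gamma\notin\mc P_n$, and dually $\mu\nleqslant\gamma$ failing only when $\gamma\in\mc P_n$) to control the full Verma flag rather than just the top. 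Once $P_n(\mu)\in\mc O_n^\Delta$ is secured, the remaining assertions are comparatively routine applications of exactness of $\mf{tr}_k^n$, \lemref{lem:trunc}, the equivalence of \thmref{Thm::Equivlanece}, and the duality functor.
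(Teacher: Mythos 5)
There is a genuine gap at the first step, and it is not merely a missing verification: the claim you reduce to is false. You propose to show that the projective cover $P^-_n(\mu)$ of $L_n(\mu)$ in $\mc O^-_n$ already lies in $\mc O_n$, via BGG reciprocity plus a ``companion inequality'' $\mu\le\gamma\Rightarrow\gamma\in\mc{P}_n$. But the $E_n$-weight argument of \lemref{le} only goes one way: every simple root of $\G_n$ has non-positive value on $E_n$, so $\mu\le\gamma$ forces $\gamma(E_n)\le\mu(E_n)$, i.e.\ weights \emph{above} $\mu$ have \emph{smaller} $E_n$-value and are therefore more, not less, likely to leave $\mc{P}_n$. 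Concretely, if $\mu\in\mc{P}_n$ is antidominant in its block with $\mu(E_n)=0$ and $\gamma=w\cdot\mu>\mu$ satisfies $\gamma(E_n)<0$, then $[\Delta_n(\gamma):L_n(\mu)]\neq 0$, so $(P^-_n(\mu):\Delta_n(\gamma))\neq 0$ and $P^-_n(\mu)\notin\mc O_n$. This is exactly why the paper does something different: using the argument of Humphreys' Proposition 3.7(a) together with \lemref{le} (whose content is that a ``bad'' weight is never $\le$ a ``good'' one, hence among any remaining weights a maximal one can always be chosen bad if any bad ones remain), one reorders the Verma flag of $P^-_n(\mu)$ so that all factors $\Delta_n(\gamma^i)$ with $\gamma^i\notin\mc{P}_n$ sit in a submodule $M_{s+1}$, and then defines $P_n(\mu):=P^-_n(\mu)/M_{s+1}$. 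Projectivity in $\mc O_n$ follows because $\Hom_{\G_n}(M_{s+1},N)=0$ for all $N\in\mc O_n$, and \eqnref{BGGre} is inherited from BGG reciprocity in $\mc O^-_n$ by construction. Your plan, as stated, cannot produce the object whose existence is being asserted.

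The second half of your proposal (truncation compatibility and the injective statement via $\vee$) is essentially sound and close in spirit to the paper, which argues in the opposite direction: it lifts $P_k(\mu)$ through the equivalence of \thmref{Thm::Equivlanece} to a module $M\in\mc O^{\Delta}_{n,k}$, shows $M$ is projective using \propref{stExt}, and identifies $M\cong P_n(\mu)$ by matching Verma multiplicities via \eqnref{BGGre} and \lemref{lem:trunc}. Either direction requires the observation you use but do not justify, namely that $P_n(\mu)\in\mc O^{\Delta}_{n,k}$ for $\mu\in\mc{P}_k$; this does hold (for $\gamma\in\mc{P}_n$ with $\gamma\ge\mu$ and $\mu\in\mc{P}_k$ one gets $\sum_{i\ge j}\gamma(E_i)\le\sum_{i\ge j}\mu(E_i)=0$ for $j>k$, whence $\gamma\in\mc{P}_k$), but note that this is again a directional weight argument and is \emph{not} the same statement as the false companion inequality above. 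Once the construction of $P_n(\mu)$ is repaired as in the paper, the rest of your outline goes through.
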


\begin{proof}
For $\mu\in \mc{P}_n$, let ${P}^-_n(\mu)$ be a projective cover of $L_n(\mu)$ in $\mc{O}^-_n$. Then $P^-_n(\mu)$ has a Verma flag $0={M}_{r}\subset {M}_{r -1} \subset\cdots \subset M_0 =M$ satisfying
$M_{i}/M_{i+1} \cong \Delta_n(\gamma^i)$ with $\gamma^i\in \mc{P}_n^-$, for each $0\leq i\leq r-1$. Applying the argument of the proof of \cite[Proposition 3.7(a)]{Hum08} repeatedly and using \lemref{le}, there is a number $s$ such that $\gamma^i\in \mc{P}_n$ for $0\le i\le s$ and $\gamma^i\notin \mc{P}_k$ for $i>s$. Let $P_n(\mu)=P_n(\mu)/M_{s+1}$. It is clear that $P_n(\mu)\in \mc{O}_n$ by \lemref{2.1} and it has a simple top. Since $\Hom_{\G_n}(M_{s+1}, N)=0$ for all $N\in \mc{O}_n$, $P_n(\mu)$ is  a projective cover of $L_n(\mu)$ in $\mc O_n$. There is also an injective envelope $I_n(\mu):=P_n(\mu)^\vee$ of $L_n(\mu)$ in $\mc O_n$.

The equality \eqref{BGGre} follows from the the BGG reciprocity for $\mc{O}_n^-$ and the construction of $P_n(\mu)$. By \thmref{Thm::Equivlanece}, there exists $M\in \mc{O}_{n,k}^{\Delta}$ such that $\mf{tr}_{k}^n(M) \cong P_k(\mu)$. By \propref{stExt} and ${\rm Ext}^1_{\mc O_{k}}(P_k(\mu),\mf{tr}_k^n(N))=0$ for all $N\in \mc O_n$, we have ${\rm Ext}^1_{\mc O_{n}}(M,N)=0$ for all $N\in \mc O_n$ and hence $M$ is a projective module in $\mc O_n$. By \eqref{BGGre} and Lemma \ref{lem:trunc}, it is easy to see that the multiplicity of each Verma module appearing in the Verma flags of $M$ and $P_n(\mu)$ are equal. Also $M$ and $P_n(\mu)$ are projective modules with the same quotient isomorphic to $L_n(\mu)$. Since $P_n(\mu)$ is a projective cover of $L_n(\mu)$,  we have $M\cong P_n(\mu)$ and $\mf{tr}_{k}^{n}(P_{n}(\mu))\cong\mf{tr}_{k}^{n}(M)\cong P_k(\mu)$.
\end{proof}

\begin{thm} \label{Oproj} For each $\mu \in \mc{P}$, there is a projective cover $P(\mu):=P_{\infty}(\mu)$ of $L(\mu)$ in $\mc O$. Moreover, $P(\mu)$ has a finite Verma flag and
\begin{equation}\label{BGGre-inf}
 (P(\la): \Delta(\mu)) = [\Delta(\mu): L(\la)], \quad \text{for $\la, \mu \in \mc{P}$.}
\end{equation}
\end{thm}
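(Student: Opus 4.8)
The plan is to reduce everything to the finite-rank projective covers $P_k(\mu)$ produced in \propref{Oproj-finite} and to transport them to $\mc O$ through the equivalence of \thmref{Thm::Equivlanece}. First I fix $\mu\in\mcP$ and choose $k_0\in\N$ with $\mu\in\mcP_{k_0}$, which exists because the coordinates $\la_j$ of any integral weight vanish for $j\gg 0$. For each $k\ge k_0$, \thmref{Thm::Equivlanece} applied with $n=\infty$ furnishes a module $P^{(k)}\in\mc O^{\Delta}_{\infty,k}$, unique up to isomorphism, with $\mf{tr}^{\infty}_k(P^{(k)})\cong P_k(\mu)$; by construction $P^{(k)}$ carries a finite Verma flag.

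Next I verify that $P^{(k)}$ is projective in $\mc O$. For an arbitrary $N\in\mc O$, \propref{stExt}(ii) realizes $\mf{tr}^{\infty}_k$ as a monomorphism
$${\rm Ext}^1_{\mc O}(P^{(k)},N)\hookrightarrow {\rm Ext}^1_{\mc O_k}\big(P_k(\mu),\mf{tr}^{\infty}_k(N)\big),$$
whose target vanishes since $P_k(\mu)$ is projective in $\mc O_k$ and $\mf{tr}^{\infty}_k(N)\in\mc O_k$. Hence ${\rm Ext}^1_{\mc O}(P^{(k)},-)=0$, which forces $P^{(k)}$ to be projective (every extension by $P^{(k)}$ splits, so no appeal to enough projectives is needed). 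To identify its top I compute, using the full faithfulness of \propref{stExt}(i) together with \lemref{lem:trunc}, that for $\la\in\mcP$
$${\rm Hom}_{\mc O}(P^{(k)},L(\la))\cong {\rm Hom}_{\mc O_k}\big(P_k(\mu),\mf{tr}^{\infty}_k(L(\la))\big),$$
which equals $\C$ when $\la=\mu$ and $0$ otherwise (for $\la\notin\mcP_k$ one has $\mf{tr}^{\infty}_k(L(\la))=0$, and $P_k(\mu)$ has simple top $L_k(\mu)$). Thus $P^{(k)}$ has simple top $L(\mu)$ and is a projective cover of $L(\mu)$; by the uniqueness of projective covers all the $P^{(k)}$ are isomorphic, so I set $P(\mu):=P^{(k_0)}$. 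This establishes existence together with the finite Verma flag, and simultaneously yields the compatibility $\mf{tr}^{\infty}_k(P(\mu))\cong P_k(\mu)$ for every $k\ge k_0$.

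For the reciprocity \eqref{BGGre-inf} I fix $\la,\mu\in\mcP$ and choose $k$ large enough that $\la,\mu\in\mcP_k$ and $P(\la)\in\mc O^{\Delta}_{\infty,k}$. Applying the exact functor $\mf{tr}^{\infty}_k$ to a Verma flag of $P(\la)$ and using \lemref{lem:trunc} (so that each factor $\Delta(\gamma)$ with $\gamma\in\mcP_k$ maps to $\Delta_k(\gamma)$) shows truncation preserves Verma multiplicities, giving $(P(\la):\Delta(\mu))=(P_k(\la):\Delta_k(\mu))$. The finite-rank reciprocity \eqref{BGGre} rewrites the right-hand side as $[\Delta_k(\mu):L_k(\la)]$; and full faithfulness (\propref{stExt}(i)) identifies $[\Delta(\mu):L(\la)]=\dim{\rm Hom}_{\mc O}(P(\la),\Delta(\mu))$ with $\dim{\rm Hom}_{\mc O_k}(P_k(\la),\Delta_k(\mu))=[\Delta_k(\mu):L_k(\la)]$. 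Chaining these equalities delivers $(P(\la):\Delta(\mu))=[\Delta(\mu):L(\la)]$.

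Most of the work is bookkeeping of weights and flags, all of which is controlled by \lemref{lem:trunc} and \propref{stExt}. The one genuinely delicate point is the passage to the limit: confirming that the module extracted from \thmref{Thm::Equivlanece} does not depend on the auxiliary truncation level $k$ and that $\mf{tr}^{\infty}_k$ sends it onto $P_k(\mu)$ coherently for all large $k$. I expect this---rather than any cohomological input, which is already packaged in the earlier propositions---to be the main obstacle, and it is dissolved precisely by the uniqueness of projective covers once projectivity and the simple top have been secured.
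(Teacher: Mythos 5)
Your proposal is correct and follows essentially the same route as the paper: lift $P_k(\mu)$ to $\mc O^{\Delta}_{\infty,k}$ via Theorem~\ref{Thm::Equivlanece}, deduce projectivity from the vanishing of $\Ext^1$ through Proposition~\ref{stExt}(ii), identify the simple top, and obtain \eqref{BGGre-inf} by truncating a Verma flag and invoking \eqref{BGGre}. The only (harmless) variation is that you verify the simple top by computing ${\rm Hom}_{\mc O}(P^{(k)},L(\la))$ directly via full faithfulness, whereas the paper argues by contradiction using the compatibility $\mf{tr}^{\infty}_{k'}(M)\cong P_{k'}(\mu)$ for all large $k'$; both hinge on the same lemmas.
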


\begin{proof} Let $n=\infty$ and $\mu\in \mc{P}$. We can choose a sufficient large $k$ such that $\mu \in \mc{P}_k$. Using the same argument as in the last part of the proof of \propref{Oproj-finite}, we have a projective module $M\in \mc O$ such that $\mf{tr}_{k}^n(M) \cong P_k(\mu)$. We have $\mf{tr}_{k'}^n(M) \cong P_{k'}(\mu)$ for $k\le k'<\infty$ since $\mf{tr}_{k}^{k'}(P_{k'}(\mu))\cong P_k(\mu)$,  $\mf{tr}_{k}^{n}=\mf{tr}_{k}^{k'} \circ \mf{tr}_{k'}^{n}$ and $\mf{tr}_{k}^{k'}$ is an equivalence of categories from $\mc O_{k',k}^{\Delta}$ to $\mc O_{k}^{\Delta}$. Certainly, there is an epimorphism from $M$ to $L(\mu)$. We claim $M$ is a projective cover of $L(\mu)$. Otherwise, $M/R$ is a direct sum of more than two irreducible modules, where $R$ is the radical of $M$. For sufficiently large $k'$, a quotient of $P_{k'}(\mu)\cong\mf{tr}_{k'}^n(M)$ is a direct sum of more than two irreducible modules by \lemref{lem:trunc}. It contradicts to the fact that $P_{k'}(\mu)$ is a projective cover of $L_{k'}(\mu)$. Therefore $M$ is a projective cover of $L(\mu)$ and $M$ has a finite Verma flag.

Fianlly, we need to prove \eqref{BGGre-inf}. Given $\la, \mu \in \mc{P}$, we can choose a sufficient large $k$ such that $\la, \mu \in \mc{P}_k$. By the arguments above and \lemref{lem:trunc}, we have
$$(P(\la): \Delta(\mu)) = (P_k(\la): \Delta_k(\mu)).$$
By \eqref{BGGre} and \lemref{lem:trunc}, we have
$$ (P_k(\la): \Delta_k(\mu)) = [\Delta_k(\mu): L_k(\la)]= [\Delta(\mu): L(\la)].$$ Hence the proof is completed.
 \end{proof}

From the proof of \thmref{Oproj}, we have the following corollary.

\begin{cor} \label{Thm::ProjToProj}
For all $k$ and $\mu\in \mc{P}_k$, we have
$$
\mf{tr}_{k}^{\infty}(P(\mu))\cong P_k(\mu).
$$.
\end{cor}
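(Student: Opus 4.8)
The plan is to read the statement off from the proof of \thmref{Oproj}, making it uniform in $k$ by means of the transitivity of the truncation functors together with the finite-rank matching established in \propref{Oproj-finite}. The essential point is that the proof of \thmref{Oproj} already produces the isomorphism $\mf{tr}_{k'}^{\infty}(P(\mu))\cong P_{k'}(\mu)$ for all sufficiently large $k'$, and a short descent then propagates it down to every admissible $k$.

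Concretely, I would fix $k$ with $\mu\in\mc{P}_k$, noting that then $\mu\in\mc{P}_{k'}$ for every $k'\ge k$ as well. From the proof of \thmref{Oproj} we have $\mf{tr}_{k'}^{\infty}(P(\mu))\cong P_{k'}(\mu)$ for all large enough $k'$; fix one such $k'\ge k$. On the finite-rank side, \propref{Oproj-finite} supplies $\mf{tr}_{k}^{k'}(P_{k'}(\mu))\cong P_k(\mu)$. Feeding these into the transitivity relation $\mf{tr}_{k}^{\infty}=\mf{tr}_{k}^{k'}\circ\mf{tr}_{k'}^{\infty}$ (also recorded in the proof of \thmref{Oproj}) gives
\[
\mf{tr}_{k}^{\infty}(P(\mu))=\mf{tr}_{k}^{k'}\big(\mf{tr}_{k'}^{\infty}(P(\mu))\big)\cong\mf{tr}_{k}^{k'}(P_{k'}(\mu))\cong P_k(\mu),
\]
which is the desired isomorphism.

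I expect no genuine obstacle here: all the needed ingredients are already assembled in \thmref{Oproj} and \propref{Oproj-finite}, and the corollary merely records that the matching of projectives under truncation holds for \emph{every} $k$ with $\mu\in\mc{P}_k$, not only for the one large $k$ used to construct $P(\mu)$. The sole point worth a sentence of care is that $\mf{tr}_{k'}^{\infty}(P(\mu))$ is literally the finite-rank projective $P_{k'}(\mu)$ before $\mf{tr}_{k}^{k'}$ is applied, so that \propref{Oproj-finite} is genuinely applicable; but this is exactly the content of the large-$k'$ case, so the descent is immediate. Alternatively, one could avoid transitivity altogether by simply observing that the range $k\le k'<\infty$ in the proof of \thmref{Oproj} already covers the given $k$ itself, since any admissible $k$ exceeds the smallest admissible index used there.
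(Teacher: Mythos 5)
Your proposal is correct and matches the paper's (implicit) argument: the corollary is read off from the proof of \thmref{Oproj}, where the only constraint on the chosen index is $\mu\in\mc{P}_k$, and the descent to an arbitrary admissible $k$ via $\mf{tr}_{k}^{\infty}=\mf{tr}_{k}^{k'}\circ\mf{tr}_{k'}^{\infty}$ together with \propref{Oproj-finite} is exactly the mechanism already used there. No gaps.
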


Applying the duality functor to $\mc O$, we have the following corollary.

\begin{cor}\label{inj}
For each $\mu \in \mc{P}$, there is an injective envelope $I(\mu)$ of $L(\mu)$ in $\mc O$. Moreover, $I(\mu)$ has a finite filtration such that its successive quotients are isomorphic to costandard modules.
\end{cor}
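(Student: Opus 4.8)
The plan is to obtain Corollary~\ref{inj} as a direct consequence of \thmref{Oproj} by applying the duality functor $\vee$ to the projective objects constructed there. The key structural facts I would invoke are that $\vee$ is an exact, contravariant, involutive functor on $\mc O$ (stated in the discussion surrounding \eqnref{Eq::def::duality}), that it fixes every irreducible module, i.e.\ $L(\mu)^\vee\cong L(\mu)$ for $\mu\in\mc P$, and that it interchanges standard and costandard modules via the definition $\nabla(\mu):=\Delta(\mu)^\vee$. These together convert projective covers into injective envelopes and Verma flags into costandard flags.

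First I would set $I(\mu):=P(\mu)^\vee$, where $P(\mu)$ is the projective cover of $L(\mu)$ furnished by \thmref{Oproj}. Since $\vee$ is exact and contravariant, it sends projectives to injectives; concretely, for any $N\in\mc O$ the natural isomorphism
\begin{align*}
{\rm Hom}_{\G}(N, P(\mu)^\vee)\cong {\rm Hom}_{\G}(P(\mu), N^\vee)
\end{align*}
combined with the exactness of ${\rm Hom}_{\G}(P(\mu),-)$ and the exactness of $\vee$ shows that ${\rm Hom}_{\G}(-, I(\mu))$ is exact, so $I(\mu)$ is injective in $\mc O$. Next, applying $\vee$ to the surjection $P(\mu)\twoheadrightarrow L(\mu)$ and using $L(\mu)^\vee\cong L(\mu)$ gives an injection $L(\mu)\hookrightarrow I(\mu)$; because $\vee$ preserves the property of having a simple socle dual to a simple top, $I(\mu)$ has simple socle $L(\mu)$, and an injective object with simple socle $L(\mu)$ is precisely an injective envelope of $L(\mu)$.

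For the flag statement, I would dualize the finite Verma flag $0=M_r\subset\cdots\subset M_0=P(\mu)$ of $P(\mu)$. Applying the exact contravariant functor $\vee$ reverses the filtration and turns each subquotient $M_i/M_{i+1}\cong\Delta(\gamma^i)$ into $\Delta(\gamma^i)^\vee=\nabla(\gamma^i)$, yielding a finite filtration of $I(\mu)=P(\mu)^\vee$ whose successive quotients are the costandard modules $\nabla(\gamma^i)$. This is exactly the asserted costandard flag.

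I do not anticipate a genuine obstacle here, as the argument is formal once the properties of $\vee$ are in hand; the only point requiring a little care is the bookkeeping that dualizing a filtration by subobjects produces a filtration by quotients, so that the inclusions reverse and the subquotients $M_i/M_{i+1}$ reappear (in reverse order) as the successive quotients of the dual filtration. The fact that each dual subquotient is genuinely $\nabla(\gamma^i)$ rather than merely a costandard module up to some twist follows immediately from the \emph{definition} $\nabla(\gamma):=\Delta(\gamma)^\vee$ already recorded in the excerpt, so no additional verification is needed.
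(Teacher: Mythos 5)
Your proposal is correct and is exactly the paper's argument: the authors derive \corref{inj} from \thmref{Oproj} with the single remark ``Applying the duality functor to $\mc O$,'' and your write-up simply fills in the standard details (injectivity of $P(\mu)^\vee$, simple socle, and the reversal of the Verma flag into a costandard flag).
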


Using the same arguments for the proof of  \cite[Proposition 5.2]{CLW11}, every module in $(\mc O^{\mf a})_f$ has a finite filtration such that its successive quotients are highest weight modules. Since every Verma module in $\mc O^{\mf a}$ has a finite composition series by \cite[Corollary 3.12]{CL10}, every  finitely generated module in $\mc O^{\mf a}$ has a finite composition series. Therefore the notions of finitely generated module and module having a finite composition series are the same in $\mc O^{\mf a}$. Hence  $(\mc O^{\mf a})_f$ is an abelian category and every projective cover of irreducible module in $\mc O^{\mf a}$ is also in $(\mc O^{\mf a})_f$.

\begin{cor} The category $(\mc O^{\mf a})_f$ has enough projectives and injectives.
\end{cor}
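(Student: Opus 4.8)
The plan is to leverage the structural facts just established for $(\mc O^{\mf a})_f$: it is an abelian category whose objects are precisely the finite-length modules of $\mc O^{\mf a}$, and by \thmref{Oproj} together with the preceding remark it contains the projective cover $P(\mu)$ of each irreducible $L(\mu)$.

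First I would treat projectives. The key preliminary point is that each $P(\mu)$ is already projective \emph{in} $(\mc O^{\mf a})_f$: it is projective in $\mc O^{\mf a}$, every epimorphism in the full subcategory $(\mc O^{\mf a})_f$ is an epimorphism in $\mc O^{\mf a}$, and any lift of a map into an object of $(\mc O^{\mf a})_f$ automatically lands in $(\mc O^{\mf a})_f$. Now given $M\in(\mc O^{\mf a})_f$, its finite length forces the head $M/\mathrm{rad}\,M$ to be a finite direct sum $\bigoplus_{i=1}^{t}L(\mu_i)$, and $\mathrm{rad}\,M$ to be a superfluous submodule. Setting $P:=\bigoplus_{i=1}^{t}P(\mu_i)$, a finite direct sum of projectives which is again of finite length, the natural surjection $P\twoheadrightarrow M/\mathrm{rad}\,M$ lifts through $M\twoheadrightarrow M/\mathrm{rad}\,M$ by projectivity to a map $\phi\colon P\to M$. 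Since $\mathrm{Im}\,\phi+\mathrm{rad}\,M=M$ and $\mathrm{rad}\,M$ is superfluous, $\phi$ is an epimorphism. Thus $(\mc O^{\mf a})_f$ has enough projectives.

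For injectives I would invoke the duality functor $\vee$. It is exact and contravariant on $\mc O^{\mf a}$, fixes each irreducible, and satisfies $\vee\circ\vee\cong\mathrm{id}$; since it preserves composition factors it preserves finite length, hence restricts to a contravariant self-equivalence of $(\mc O^{\mf a})_f$ that interchanges projective and injective objects. Given $M\in(\mc O^{\mf a})_f$, choose a projective epimorphism $P\twoheadrightarrow M^{\vee}$ as above; applying $\vee$ yields a monomorphism $M\cong M^{\vee\vee}\hookrightarrow P^{\vee}$ into the injective object $P^{\vee}$. Hence $(\mc O^{\mf a})_f$ has enough injectives as well.

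The argument is essentially formal once finite length is in hand; the only points needing care are the superfluousness of $\mathrm{rad}\,M$ (which guarantees the lift $\phi$ is surjective) and the fact that projectivity in $\mc O^{\mf a}$ descends to projectivity in the full subcategory $(\mc O^{\mf a})_f$. Neither is a genuine obstacle, so there is no hard step beyond what has already been proved.
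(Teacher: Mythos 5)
Your argument is correct and follows essentially the same route as the paper: the paper likewise reduces to projectives via the duality functor $\vee$ and then surjects a finite direct sum of projective covers of irreducibles onto a finite-length module $M$. You merely spell out the details the paper leaves implicit (the lift through the head $M/\mathrm{rad}\,M$ and the superfluousness of the radical), which is a faithful elaboration rather than a different proof.
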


 \begin{proof}
Since we have the duality functor $\vee$ on $(\mc O^{\mf a})_f$, it is enough to show $(\mc O^{\mf a})_f$ has enough projectives. Since every module in $M\in (\mc O^{\mf a})_f$ has a finite composition series, there is a finite direct sum of projective covers of irreducible modules, which have a finite composition series from the disscussions above, having an epimorphic image $M$. The proof is completed.
\end{proof}

Using the existence of projective covers, we have the following proposition which is a much stronger version of \propref{stExt}. The proof is exactly the same as the proof of \cite[Proposition A3.13]{Don98}. For our application, we only need $n\in\N$. Note that every projective cover in $\mc{O}_n$ has a finite Verma flag.

\begin{prop}\label{Don} For $0\le k< n \le \infty$, $M\in \mc O^{\Delta}_{n,k}$ and $N \in \mc O_{n}$, we have isomorphisms
$$
 \mf{tr}_k^n: \emph{Ext}^i_{\mc O_{n}}(M, N) \longrightarrow \emph{Ext}^i_{\mc O_{k}}(\mf{tr}_k^n(M),\mf{tr}_k^n(N)), \quad \text{for all $i\ge 0$}.
$$
\end{prop}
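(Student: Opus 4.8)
The plan is to prove the claim by induction on the cohomological degree $i$, using a dimension-shifting argument that reduces the general $\mathrm{Ext}^i$ statement to the already-established cases $i=0,1$ from \propref{stExt}. The key structural fact I would exploit is that every module $M\in\mc O^\Delta_{n,k}$ has a finite Verma flag with sections $\Delta_n(\gamma^i)$, $\gamma^i\in\mc P_k$, and that each Verma module admits a surjection $P_n(\gamma)\twoheadrightarrow\Delta_n(\gamma)$ from a projective cover with a finite Verma flag (by \propref{Oproj-finite}). Following the cited reference \cite[Proposition A3.13]{Don98}, the strategy is to resolve $M$ by projectives that remain in the category $\mc O^\Delta_{n,k}$ and whose truncations are the corresponding projectives in $\mc O^\Delta_k$; the crucial input making this work is \corref{Thm::ProjToProj} together with the relation $\mf{tr}^n_k(P_n(\mu))\cong P_k(\mu)$ from \propref{Oproj-finite}, which guarantees that the truncation functor carries a projective resolution in $\mc O_n$ to a projective resolution in $\mc O_k$.

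The concrete steps I would carry out are as follows. First, for a given $M\in\mc O^\Delta_{n,k}$, I would choose a projective cover $P\twoheadrightarrow M$ in $\mc O_n$; since $M$ has a Verma flag with sections indexed by $\mc P_k$, the projective $P$ is a direct sum of modules $P_n(\gamma)$ with $\gamma\in\mc P_k$, so $P\in\mc O^\Delta_{n,k}$ as well. Let $K$ be the kernel, giving a short exact sequence $0\to K\to P\to M\to 0$; one must check $K\in\mc O^\Delta_{n,k}$, which follows because the kernel of a map between modules with Verma flags indexed by $\mc P_k$ again has such a flag. Second, applying the truncation functor—which is exact—yields $0\to\mf{tr}^n_k(K)\to\mf{tr}^n_k(P)\to\mf{tr}^n_k(M)\to 0$, and by \propref{Oproj-finite} the middle term $\mf{tr}^n_k(P)$ is the corresponding projective cover in $\mc O_k$. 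Third, the long exact sequences in $\mathrm{Ext}^\bullet(-,N)$ and $\mathrm{Ext}^\bullet(-,\mf{tr}^n_k(N))$ fit into a ladder connected by the truncation maps; since $P$ is projective the higher Ext of $P$ vanishes in both rows, so the connecting map gives $\mathrm{Ext}^{i}_{\mc O_n}(M,N)\cong\mathrm{Ext}^{i-1}_{\mc O_n}(K,N)$ for $i\ge 2$, and likewise downstairs. The inductive hypothesis on $i$ then transports the isomorphism for $K$ in degree $i-1$ to an isomorphism for $M$ in degree $i$, with the base cases $i=0,1$ supplied by \propref{stExt}.

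The main obstacle I anticipate is the bookkeeping needed to verify that the syzygy $K$ genuinely lies in $\mc O^\Delta_{n,k}$ and not merely in $\mc O^\Delta_n$, so that the induction can proceed within the correct subcategory; this requires controlling which weights $\gamma\in\mc P_n$ can appear in the Verma flag of $K$ and ruling out sections indexed by $\mc P^-_n\setminus\mc P_k$. Here \lemref{le}, the ordering argument, and \lemref{lem:trunc} are the relevant tools: they ensure that no ``extra'' Verma factors survive under truncation and that $\mf{tr}^n_k$ is compatible with the flags. A secondary technical point is confirming that the truncation maps genuinely commute with the connecting homomorphisms of the two long exact sequences; this is precisely the content of \lemref{lem::ShortInduceLong} extended from $i=0,1$ to the relevant degrees, and it is what allows the five-lemma-style comparison to be iterated. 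Once these compatibilities are in place, the induction closes immediately and the isomorphism for all $i\ge 0$ follows.
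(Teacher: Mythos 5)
Your proposal is correct and is essentially the argument the paper relies on: the paper's proof consists of the citation to \cite[Proposition A3.13]{Don98} together with the observation that every projective cover in $\mc O_n$ has a finite Verma flag, and that reference's proof is precisely the dimension-shifting induction on $i$ that you describe (projective presentation by $\bigoplus P_n(\gamma^i)$ with $\gamma^i\in\mc P_k$, syzygy again in $\mc O^\Delta_{n,k}$, truncation sending projectives to projectives, five-lemma on the Ext ladder). One small correction: \propref{stExt}(ii) supplies only a monomorphism in degree one, not an isomorphism, but your own ladder argument recovers the degree-one isomorphism from the degree-zero case, so nothing is lost.
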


\subsection{Highest weight category }\label{HWC}

In  this subsection, we show that $\mc{O}_{n,\la}$, for each $n\in\Ninf$, has a Kazhdan-Lusztig theory in the sense of \cite[Definition 2.1]{CPS92}. We also obtain some properties which will be used in next section.

For a dominant integral weight $\la\in X_n$ and $n\in \N \cup \{\infty\}$, we let
\begin{equation}\label{weights}
\Lambda^{\la,-}_n:= (W_n\cdot \la)\cap  \mcP^-_n\quad\text{and}\quad\Lambda^{\la}_n:= \Lambda^{\la,-}_n\cap \mc{P}_n.
\end{equation}
 Let $\mc{O}_{n,\la}$ (resp. $\mc{O}_{n,\la}^-$) denote the full subcategory of $\mc{O}_n$ (resp. $\mc{O}_n^-$) consisting of modules such that its irreducible subquotients are isomorphic to $L_n(\mu)$ for some $\mu\in\Lambda^{\la}_n$ (resp. $\Lambda^{\la,-}_n$).
Note that $\mc{O}_{n,\la}^-$, for $n\in \N$, is a block of the usual parabolic BGG category and $\mc{O}_{n,\la}=\mc{O}_{n,\la}^-\cap\mc{O}_n$.

Recall that $(\mcP^-_n, \le)$, for each $n\in\Ninf$, is a partially ordered set by declaring $\mu\le \la$~ ($\mu, \la\in \mcP^-_n$) if
$\la-\mu$ can be written as a nonnegative integral linear combination of simple roots of $\G_n$. Note that  every module in $\mc O_{n,\la}^-$(resp. $\mc O_{n,\la} $),  for $n \in \N$, has finite composition series.  Then it is easy to see $\mc O_{n,\la}^-$(resp. $\mc O_{n,\la} $), for each $n\in\N$, is an Artinian highest weight category in the sense of \cite[Definition 3.1]{CPS88} with repect to the poset $(\Lambda^{\la,-}_n, \le)$  (resp. $(\Lambda^{\la}_n, \le)$), simple module $L_n(\mu)$, injective envelope $I^-_n(\mu)$ (resp. $I_n(\mu)$) of $L_n(\mu)$, projective covers $P^-_n(\mu)$ (resp. $P_n(\mu)$) of $L_n(\mu)$, standard module $\Delta_n(\mu)$ and costandard module $\nabla_n(\mu)$ for  $\mu\in \Lambda^{\la,-}_n$ (resp. $\mu\in\Lambda^{\la}_n$). Recall $P^-_n(\mu)$ is defined in the proof of \propref{Oproj-finite}.
Note that ${\rm End}_{\G_n}(L_n(\mu))\cong\C$ for all $\mu\in\Lambda^{\la,-}_n$,
and we have the duality functor $\vee$ on $\mc O_{n,\la}^-$(resp. $\mc O_{n,\la} $) satisfying $\vee\circ\vee\cong {\rm id}_{\mc{O}_n}$ and $L_n(\mu)^\vee\cong L_n(\mu)$. In particular, $(\mc O_{n,\la}^-)^{\rm op}$ and $\mc O_{n,\la}^{\rm op}$ are also  Artinian highest weight categories. Therefore $\mc O_{n,\la}^-$ and $\mc O_{n,\la}$ satisfy the conditions $(1.1)$ and $(1.2)$ in \cite{CPS93}.

For $n\in\Ninf$, let $W_{n,\la} : = \{w\in W_n \, |\, w\cdot \la =\la \}$ and let $W^{\la}_n$ denote the set of the shortest representatives of left cosets in $W_n/W_{n,\la}$. Also, let
\begin{align}\label{W-ring}
\mathring{W}_n^\la : = \{w\in W^\la_n\,|\,~w\cdot \la \in \mcP^-_n\}.
\end{align} Then every weight $\mu\in \Lambda^{\la,-}_n$ can be written in the form $\mu=w\cdot \la$ for a unique $w\in \mathring{W}_n^\la$.
Define the {\em length} function $\ell\,:\,\Lambda^{\la,-}_n\longrightarrow \Z$ by letting $\ell(w\cdot\la)$ be the length of $w$, for $w\in \mathring{W}_n^\la$. It is known that the highest weight category $\mc{O}^-_n$, for $n\in \N$, has a Kazhdan-Lusztig theory with respect to the length function $\ell$ in the sense of \cite[Definition 2.1]{CPS92}. Since $\mc{O}^-_n$ has the duality functor, the fact that $\mc{O}^-_n$ has a Kazhdan-Lusztig theory is equivalent to
\begin{equation}\label{van}
 \Ext^i_{\mc{O}^-_n}(\Delta_n(\mu), L_n(\gamma))\not=0\,\,\, \Rightarrow\,\,\, \ell(\mu)-\ell(\gamma)\equiv i \,\,\, ({\rm mod }\, 2).
 \end{equation}
 There is a sketched proof of the vanishing condition for \eqnref{van} in the proof of \cite[Theroem 3.2]{CSe} (cf. \cite[Theorem 3.11.4]{BGS96}).

\begin{prop}
For $n\in \N$, $\mu\in\Lambda^{\la}_n$ and $M\in \mc{O}_n$, we have
$$ \Ext^i_{\mc{O}_n}(\Delta_n(\mu), M)\cong  \Ext^i_{\mc{O}^-_n}(\Delta_n(\mu), M),\qquad \text{for all $i\ge 0$.}$$
In particular, $\mc{O}_n$ has a Kazhdan-Lusztig theory for $n\in \Ninf$.
\end{prop}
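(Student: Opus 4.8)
The plan is to prove the isomorphism of extension groups first, and then deduce the Kazhdan--Lusztig property as a formal consequence. For the isomorphism, the key observation is that $\mc{O}_n$ is a full subcategory of $\mc{O}^-_n$ which is closed under the relevant constructions, so the main task is to compare Yoneda Ext groups computed in the two categories. I would first treat the case $i=0$, where both sides equal $\Hom_{\G_n}(\Delta_n(\mu),M)$ since $\mc{O}_n$ is a full subcategory of $\mc{O}^-_n$, so there is nothing to prove. The substance lies in higher $i$.

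For $i\ge 1$, the strategy is to use the projective resolution of $\Delta_n(\mu)$ inside $\mc{O}^-_n$ coming from the projective cover $P^-_n(\mu)$ and to show that this resolution can be replaced by one lying entirely in $\mc{O}_n$. Concretely, recall from the proof of \propref{Oproj-finite} that $P^-_n(\mu)$ has a Verma flag whose successive quotients split as those $\Delta_n(\gamma^i)$ with $\gamma^i\in\mc{P}_n$ (giving $P_n(\mu)$) and those with $\gamma^i\notin\mc{P}_n$; by \lemref{le}, the latter weights satisfy $\gamma^i\nleqslant\mu$, so any weight $\gamma$ with $\Delta_n(\gamma)$ appearing in a projective resolution of $\Delta_n(\mu)$ relevant to computing $\Ext^i$ against modules in $\mc{O}_n$ must already lie in $\mc{P}_n$. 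I would make this precise by building, for $\mu\in\Lambda^\la_n$, a projective resolution of $\Delta_n(\mu)$ in $\mc{O}^-_n$ by projective covers $P^-_n(\nu)$, and observing that the weights $\nu$ occurring satisfy $\nu\ge\mu$ in the dominance order; \lemref{le} then forces $\nu\in\mc{P}_n$, so each $P^-_n(\nu)$ can be truncated to the projective $P_n(\nu)\in\mc{O}_n$ without affecting $\Hom$ into any $N\in\mc{O}_n$, because $\Hom_{\G_n}(M_{s+1},N)=0$ for the discarded part. This yields a projective resolution of $\Delta_n(\mu)$ in $\mc{O}_n$ whose terms compute the same cohomology, giving the desired isomorphism $\Ext^i_{\mc{O}_n}(\Delta_n(\mu),M)\cong\Ext^i_{\mc{O}^-_n}(\Delta_n(\mu),M)$ for all $i\ge 0$.

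Having established the isomorphism, the Kazhdan--Lusztig statement follows immediately. For $n\in\N$, the category $\mc{O}^-_n$ has a Kazhdan--Lusztig theory, equivalent to the parity vanishing condition \eqnref{van}; since the Ext groups in $\mc{O}_n$ agree with those in $\mc{O}^-_n$ and the length function $\ell$ on $\Lambda^\la_n\subseteq\Lambda^{\la,-}_n$ is the restriction of the one on $\mc{O}^-_n$, condition \eqnref{van} holds verbatim in $\mc{O}_n$, so $\mc{O}_n$ has a Kazhdan--Lusztig theory for $n\in\N$. For $n=\infty$, I would pass to the limit via the truncation functor: by \propref{Don}, $\mf{tr}^\infty_k$ gives isomorphisms $\Ext^i_{\mc{O}}(\Delta(\mu),N)\cong\Ext^i_{\mc{O}_k}(\Delta_k(\mu),\mf{tr}^\infty_k(N))$ for $M=\Delta(\mu)$ a Verma module lying in the appropriate $\mc{O}^\Delta_{\infty,k}$, and the parity condition \eqnref{van} for $\mc{O}_k$ together with the compatibility of $\ell$ across ranks transfers the vanishing to $\mc{O}$.

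The main obstacle I anticipate is the careful verification that the discarded summands of the projective resolution of $\Delta_n(\mu)$ genuinely contribute nothing to the Ext computation against arbitrary $N\in\mc{O}_n$ rather than just to $\Hom$; that is, one must ensure the truncated complex is still exact and still projective in $\mc{O}_n$, which relies on \lemref{le} controlling \emph{all} weights appearing throughout the resolution, not merely in the first syzygy. Making the dominance-order argument uniform across every term of an infinite resolution is the delicate point, and is exactly where the hypothesis $\mu\in\mc{P}_n$ (as opposed to merely $\mu\in\mcP^-_n$) is essential.
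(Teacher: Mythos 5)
Your reduction to higher $i$ and your treatment of the case $n=\infty$ are fine, but the central step --- controlling the weights in a projective resolution of $\Delta_n(\mu)$ via \lemref{le} --- is a misapplication of that lemma, and the claim it is meant to support is false. The projective covers $P^-_n(\nu)$ occurring in a minimal projective resolution of $\Delta_n(\mu)$ in $\mc{O}^-_n$ are indexed by weights $\nu$ with $\Ext^i_{\mc{O}^-_n}(\Delta_n(\mu),L_n(\nu))\neq 0$, and these satisfy $\nu\ge\mu$, i.e.\ $\mu\le\nu$. But \lemref{le} only excludes the \emph{opposite} inequality: it says a weight $\gamma\in\mcP^-_n\setminus\mc{P}_n$ cannot satisfy $\gamma\le\mu$ for $\mu\in\mc{P}_n$; it says nothing about $\gamma\ge\mu$. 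Indeed, since $\nu\ge\mu$ forces $\nu(E_n)\le\mu(E_n)$, weights $\nu\ge\mu$ with $\nu\notin\mc{P}_n$ are exactly the ones that do occur --- they are the weights $\gamma^i$ of the discarded submodule $M_{s+1}\subset P^-_n(\mu)$ in the proof of \propref{Oproj-finite}, and the whole point of that construction is that they are generally present. In the simplest nontrivial two-element situation $\{\mu<\nu\}$ with $\mu\in\mc{P}_n$, $\nu\notin\mc{P}_n$ and $[\Delta_n(\nu):L_n(\mu)]=1$, the minimal resolution is $0\to P^-_n(\nu)\to P^-_n(\mu)\to\Delta_n(\mu)\to 0$, with $\nu\notin\mc{P}_n$. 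So the truncated complex you want is not produced by the argument you give, and the exactness and projectivity issues you flag at the end are not technicalities but the actual missing content.

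The paper sidesteps this by working on the injective side, where the problem disappears for a structural rather than combinatorial reason: by \propref{Oproj-finite} one takes an injective resolution $0\to M\to I_0\to I_1\to\cdots$ entirely inside $\mc{O}_n$; each $I_j$ has a finite costandard filtration, hence is acyclic for $\Hom_{\G_n}(\Delta_n(\mu),-)$ in $\mc{O}^-_n$ because $\Ext^{>0}_{\mc{O}^-_n}(\Delta_n(\mu),\nabla_n(\gamma))=0$, so the one complex computes the derived functors in both categories simultaneously. If you insist on staying with projectives, the correct route is the quasi-hereditary quotient formalism: \lemref{le} shows $\Lambda^\la_n$ is an ideal of the poset $(\Lambda^{\la,-}_n,\le)$, so $R_{n,\la}\cong R^-_{n,\la}/(e)$ is a quotient by a suitable idempotent and $\Ext^j_{\mc{O}_n}(M,N)=\Ext^j_{\mc{O}^-_n}(M,N)$ for all $M,N\in\mc{O}_n$ by \cite[Proposition A3.3]{Don98} --- which is exactly how the paper later obtains the stronger statement in the proof of \propref{R-Kosz}.
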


\begin{proof} For $n\in \N$ and $\mu\in\Lambda^{\la}_n$, let $G$ (resp. $G^-$) denote the left exact functor $\Hom_{\G_n}(\Delta_n(\mu), \cdot)$ on $\mc{O}_n$ (resp. $\mc{O}_n^-$). By \propref{Oproj-finite}, there is an injective resolution
\begin{equation}\label{resol}
 0\longrightarrow M \longrightarrow I_0\longrightarrow  I_1 \longrightarrow I_2 \longrightarrow\cdots
\end{equation}
of $M\in\mc{O}_n$ in the category $\mc{O}_n$. Note that each $I_i$ has a finite filtration such that its successive quotients are costandard modules in $\mc{O}_n$. Therefore each $I_i$ is an $G^-$-acyclic module in $\mc{O}_n^-$ (see, for example, \cite[Theorem 6.12]{Hum08}) and hence \eqref{resol} is an $G^-$-acyclic resolution of $M$ in $\mc{O}_n^-$. This implies $\Ext^i_{\mc{O}_n}(\Delta_n(\mu), M)$ (resp. $\Ext^i_{\mc{O}^-_n}(\Delta_n(\mu), M)$), which is the $i$-th right derived functor of $G$ (resp. $G^-$) on $M$, can be computed as the $i$-th cohomology group of the complexes
\begin{equation*}
 0\longrightarrow \Hom_{\G_n}(\Delta_n(\mu), I_0)\longrightarrow \Hom_{\G_n}(\Delta_n(\mu), I_1) \longrightarrow \Hom_{\G_n}(\Delta_n(\mu), I_2) \longrightarrow\cdots
\end{equation*}
Therefore $ \Ext^i_{\mc{O}_n}(\Delta_n(\mu), M)\cong  \Ext^i_{\mc{O}^-_n}(\Delta_n(\mu), M)$, for all $i\ge 0$. In particular, $\mc{O}_n$ has a Kazhdan-Lusztig theory for $n\in \N$.

For $n=\infty$, let $\mu,\gamma\in \Lambda_n^\la$. Then $\mu,\gamma\in \Lambda_k^\la$ for some $k\in\N$. By \propref{Don}, we have isomorphisms
$${\rm Ext}^i_{\mc O_{n}}({\Delta}_{n}(\mu), L_n(\gamma))\cong {\rm Ext}^i_{\mc O_{k}}({\Delta}_{k}(\mu), L_k(\gamma)), \quad \text{for all $i\ge 0$}.$$ Therefore $\mc{O}_n$ also has a Kazhdan-Lusztig theory with respect to the length function $\ell$.
\end{proof}

\begin{lem}\label{dimExt-L-L}
For $0\le k< n < \infty$ and $\mu,\gamma\in\Lambda^{\la}_k$, we have
$${\rm dimExt}^j_{\mc{O}_n}(L_n(\mu), L_n(\gamma)) \ge {\rm dimExt}^j_{\mc{O}_k}(L_k(\mu), L_k(\ga)),\qquad \text{for all $j\ge 0$.}$$
\end{lem}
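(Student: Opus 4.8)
The plan is to reduce the inequality on $\mathrm{Ext}$-dimensions between ranks to a statement about how extension groups of simple modules behave under the truncation functor $\mf{tr}_k^n$. Because we are comparing $\mc{O}_n$ at rank $n$ with $\mc{O}_k$ at the smaller rank $k$, and because $\mu,\gamma\in\Lambda^\la_k\subseteq\Lambda^\la_n$, the natural tool is to take a projective resolution of $L_k(\mu)$ by projective covers in $\mc{O}_k$, lift it to $\mc{O}_n$ via the equivalence of \thmref{Thm::Equivlanece}, and use \propref{Don} to compare the resulting $\mathrm{Hom}$-complexes.

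First I would fix a (minimal) projective resolution $P^\bullet \to L_k(\mu)$ in $\mc{O}_k$, where each $P^i$ is a finite direct sum of projective covers $P_k(\nu)$ with $\nu\in\Lambda^\la_k$; such a resolution exists and each term lies in $\mc{O}_k^\Delta$, since projective covers carry finite Verma flags by \propref{Oproj-finite}. By \thmref{Thm::Equivlanece} the functor $\mf{tr}_k^n:\mc{O}^\Delta_{n,k}\to\mc{O}^\Delta_k$ is an equivalence, so there is a complex $Q^\bullet$ in $\mc{O}^\Delta_{n,k}$ with $\mf{tr}_k^n(Q^i)\cong P^i$, and by \corref{Thm::ProjToProj} (and \propref{Oproj-finite}) I can take $Q^i$ to be the corresponding finite direct sum of projective covers $P_n(\nu)$, which are projective in $\mc{O}_n$. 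The key point is then that $\mf{tr}_k^n(Q^\bullet)\cong P^\bullet$ is exact with cohomology $L_k(\mu)$; since $\mf{tr}_k^n$ is exact and an equivalence on the relevant subcategories, $Q^\bullet$ computes $\mathrm{Ext}^\bullet_{\mc{O}_n}(-,N)$ after applying $\mathrm{Hom}$ into any $N\in\mc{O}_n$, at least against the subquotient structure I care about.

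Next I would apply $\mathrm{Hom}_{\mc{O}_n}(-, L_n(\gamma))$ to $Q^\bullet$ and $\mathrm{Hom}_{\mc{O}_k}(-, L_k(\gamma))$ to $P^\bullet$. By \propref{Don}, for each term we have an isomorphism
\begin{equation*}
\mf{tr}_k^n:\mathrm{Hom}_{\mc{O}_n}(Q^i, L_n(\gamma))\xrightarrow{\ \sim\ }\mathrm{Hom}_{\mc{O}_k}(P^i, \mf{tr}_k^n(L_n(\gamma))),
\end{equation*}
and by \lemref{lem:trunc} one has $\mf{tr}_k^n(L_n(\gamma))\cong L_k(\gamma)$ since $\gamma\in\mc{P}_k$. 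Thus the two $\mathrm{Hom}$-complexes are isomorphic term by term and compatibly with differentials, so their cohomologies agree. The complex built from $P^\bullet$ computes $\mathrm{Ext}^j_{\mc{O}_k}(L_k(\mu),L_k(\gamma))$ exactly, because $P^\bullet$ is a genuine projective resolution in $\mc{O}_k$. The complex built from $Q^\bullet$, however, need not be a projective resolution of $L_n(\mu)$ in $\mc{O}_n$: its cohomology is $L_n(\mu)$ only after truncation, so in $\mc{O}_n$ the complex $Q^\bullet$ may fail to be acyclic above degree zero, and its homology in positive degrees could be nonzero (supported on weights killed by $\mf{tr}_k^n$). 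This is precisely why the statement is an \emph{inequality} and not an equality.

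The main obstacle, and the crux of the argument, is therefore to convert the isomorphism of cohomology of the \emph{truncated} $\mathrm{Hom}$-complex into a lower bound for the true $\mathrm{Ext}$-groups in $\mc{O}_n$. The clean way to handle this is to recognize that the cohomology of $\mathrm{Hom}_{\mc{O}_n}(Q^\bullet, L_n(\gamma))$ at spot $j$ is a subquotient of $\mathrm{Ext}^j_{\mc{O}_n}(L_n(\mu), L_n(\gamma))$: extend $Q^\bullet$ to an honest projective resolution $\widetilde{Q}^\bullet\to L_n(\mu)$ in $\mc{O}_n$ augmenting $Q^\bullet$, and use a standard comparison-of-complexes / spectral-sequence argument to see that the map on $j$-th cohomology induced by the chain map $\widetilde{Q}^\bullet\to Q^\bullet$ yields, after applying $\mathrm{Hom}(-,L_n(\gamma))$, that $\dim H^j$ of the finite complex is at most $\dim\mathrm{Ext}^j_{\mc{O}_n}(L_n(\mu),L_n(\gamma))$. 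Combining this with the term-wise isomorphism above gives
\begin{equation*}
\dim\mathrm{Ext}^j_{\mc{O}_n}(L_n(\mu),L_n(\gamma))\ \ge\ \dim H^j\big(\mathrm{Hom}_{\mc{O}_k}(P^\bullet, L_k(\gamma))\big)=\dim\mathrm{Ext}^j_{\mc{O}_k}(L_k(\mu),L_k(\gamma)),
\end{equation*}
which is the claim. I expect the delicate bookkeeping to be in making the chain map $\widetilde{Q}^\bullet\to Q^\bullet$ and the induced comparison on cohomology precise; alternatively, one may phrase the whole argument using the Yoneda description of $\mathrm{Ext}$ and observe that \propref{Don} already identifies the relevant $\mathrm{Ext}$-groups between Verma modules (the Verma-flag terms of the $Q^i$) at the two ranks, reducing the inequality to a multiplicity count via the $\mathrm{Ext}$-spectral sequence relating $\mathrm{Ext}(\Delta,L)$ to $\mathrm{Ext}(L,L)$.
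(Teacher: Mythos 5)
Your reduction to a comparison of $\mathrm{Hom}$-complexes is set up correctly: lifting a minimal projective resolution $P^\bullet\to L_k(\mu)$ through the equivalence of \thmref{Thm::Equivlanece} to a complex $Q^\bullet$ of projectives in $\mc O_{n,k}^{\Delta}$, and identifying $\Hom_{\mc{O}_n}(Q^\bullet,L_n(\gamma))$ with $\Hom_{\mc{O}_k}(P^\bullet,L_k(\gamma))$ termwise via \propref{Don} and \lemref{lem:trunc}, is all consistent with the paper's toolkit. The gap is the step you yourself flag as the crux: the claim that $\dim H^j\bigl(\Hom_{\mc{O}_n}(Q^\bullet,L_n(\gamma))\bigr)\le \dim\Ext^j_{\mc{O}_n}(L_n(\mu),L_n(\gamma))$. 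The standard comparison theorem gives a chain map $Q^\bullet\to\widetilde{Q}^\bullet$ over the surjection $H_0(Q^\bullet)\twoheadrightarrow L_n(\mu)$, hence a map $\Ext^j_{\mc{O}_n}(L_n(\mu),L_n(\gamma))\to H^j(\Hom_{\mc{O}_n}(Q^\bullet,L_n(\gamma)))$; your inequality requires this map to be \emph{surjective}. The obstruction lives in the cohomology of $\Hom_{\mc{O}_n}(C^\bullet,L_n(\gamma))$, where $C^\bullet$ is the cone, whose homology consists precisely of the modules killed by $\mf{tr}^n_k$, i.e.\ with all composition factors $L_n(\nu)$ for $\nu\in\Lambda^\la_n\setminus\Lambda^\la_k$. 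Since $\Ext^p_{\mc{O}_n}(L_n(\nu),L_n(\gamma))$ is in general nonzero for such $\nu$ and $\gamma\in\Lambda^\la_k$, these correction terms do not vanish for formal reasons; the hypercohomology spectral sequence exhibits $H^j(\Hom_{\mc{O}_n}(Q^\bullet,L_n(\gamma)))$ as built from $\Ext^j(H_0(Q^\bullet),L_n(\gamma))$ \emph{plus} contributions from the higher homology of $Q^\bullet$ — it bounds the quantity you need from the wrong side. Controlling those contributions is essentially as hard as the lemma itself, so the argument as written does not close.

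The paper sidesteps this entirely. Having just shown that $\mc{O}_{n,\la}$ and $\mc{O}_{k,\la}$ are highest weight categories with a Kazhdan--Lusztig theory (the parity condition \eqref{van}), it applies \cite[Corollary 3.9]{CPS92}, which gives the exact identity
\begin{equation*}
\dim\Ext^j_{\mc{O}_n}(L_n(\mu),L_n(\gamma))=\sum_{i+l=j}\sum_{\zeta\in\Lambda^{\la}_n}\dim\Ext^i_{\mc{O}_n}(\Delta_n(\zeta),L_n(\gamma))\cdot\dim\Ext^l_{\mc{O}_n}(\Delta_n(\zeta),L_n(\mu)),
\end{equation*}
then simply drops the terms with $\zeta\in\Lambda^\la_n\setminus\Lambda^\la_k$ (this is the sole source of the inequality) and identifies the surviving terms with their rank-$k$ counterparts using \propref{Don}. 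Your closing remark about ``the Ext-spectral sequence relating $\Ext(\Delta,L)$ to $\Ext(L,L)$'' gestures at this, but that identity is not a generic homological fact: it depends on the parity vanishing of \eqref{van}, which is the essential input missing from your argument. If you want to salvage your approach, you would need to import that same input to kill the cone's contribution in the relevant degrees, at which point you have reproduced the paper's proof in heavier notation.
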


\begin{proof} From the discussions above, we know $\mc{O}_{n,\la}$ and $\mc{O}_{k,\la}$ are highest weight categories satisfying the assumptions of \cite[Corollary 3.9]{CPS92}. For $\mu,\gamma\in\Lambda^{\la}_k$, we have
	 \begin{align*}
	&{\rm dimExt}^j_{\mc{O}_n}(L_n(\mu), L_n(\gamma))\\
	=&\, \sum_{i+l=j} \sum_{\zeta \in\Lambda^{\la}_n}{\rm dimExt}^i_{\mc{O}_n}(\Delta_n(\zeta), L_n(\ga))     \cdot{\rm dimExt}^l_{\mc{O}_n}(\Delta_n(\zeta), L_n(\mu)) \\
\ge &\, \sum_{i+l=j} \sum_{\zeta \in\Lambda^{\la}_k}{\rm dimExt}^i_{\mc{O}_n}(\Delta_n(\zeta), L_n(\ga))     \cdot{\rm dimExt}^l_{\mc{O}_n}(\Delta_n(\zeta), L_n(\mu)) \\
		=&\, \sum_{i+l=j} \sum_{\zeta \in \Lambda^{\la}_k}{\rm dimExt}^i_{\mc{O}_k}(\Delta_k(\zeta), L_k(\ga))     \cdot{\rm dimExt}^l_{\mc{O}_k}(\Delta_k(\zeta), L_k(\mu)) \\
	=&\, {\rm dimExt}^j_{\mc{O}_k}(L_k(\mu), L_k(\ga)).
	\end{align*}
The first and last equalities follow from \cite[Corollary 3.9]{CPS92}, and the second inequality follows from \propref{Don}.
  \end{proof}

\section{Endomorphism algebra of the sum of projective covers} \label{sect::EmbedingofRings}
This section is devoted to showing that $\mc{O}$ is a Koszul category. First, we show that $\mc{O}_n$, $n\in\N$,  is a Koszul category. Second, we prove that the truncation functor $\mf{tr}^n_k$ is virtually a Schur functor. We are able to show that the endomorphism algebra $R_{k,\la}$, with the Koszul grading, of the sum of projective covers can be identified with a graded subalgebra of $R_{n,\la}$ with the Koszul grading for $k<n$. Finally, we show that  $R_{\infty,\la}$, which is isomorphic to the direct limit of Koszul algebras $R_{k,\la}$, is Koszul.

For a dominant integral weight $\la\in X_n$ and $n\in \N \cup \{\infty\}$, we let
$$P_{n,\la}^-: = \bigoplus_{\mu \in  \Lambda_{n}^{\la,-}} P^-_n(\mu)\qquad \text{and}\qquad P_{n,\la}: = \bigoplus_{\mu \in  \Lambda_{n}^\la} P_n(\mu).$$
Recall that $\Lambda^{\la,-}_n$ and $\Lambda^{\la}_n$ are defined in \eqref{weights}. For $\gamma \in \Lambda^{\la,-}_n$ (resp. $\Lambda^{\la}_n$), let $e^-_{n,\gamma} \in {\rm End}_{\G_n}(P^-_{n,\la})$ (resp. $e_{n,\gamma} \in {\rm End}_{\G_n}(P_{n,\la})$) be the natural projection onto $P^-_n(\gamma)$ (resp. $P_n(\gamma)$).
For $n\in\N$, we let
$$R^-_{n,\la}:={\rm End}_{\G_n}(P^-_{n,\la})\qquad \text{and}\qquad R_{n,\la}:={\rm End}_{\G_n}(P_{n,\la}).$$
For $n=\infty$, let
 \begin{align*}
 R_{\infty,\la}:=\bigoplus_{\mu, \gamma\in   \Lambda_{\infty}^\la}e_{\infty,\mu}{\rm End}_{\G_\infty}(P_{\infty,\la})e_{\infty,\gamma}.
  \end{align*}
  Note that $R_{n,\la}$ are finite-dimensional algebras for all $n\in\N$ while  $R_{\infty,\la}$ is an infinite-dimensional locally unital algebra.

\begin{lem}\label{R-R} For $n\in \N$ and a dominant integral weight $\la\in X_n$,  we have
$$R_{n,\la}\cong  R^-_{n,\la}/(e),$$ where $e:=\sum_{\gamma\in \Lambda_n^{\la,-}\backslash\Lambda^{\la}_n}e^-_{n,\gamma}$.
\end{lem}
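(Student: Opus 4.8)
The plan is to produce a surjective algebra homomorphism $\bar T\colon R^-_{n,\la}\to R_{n,\la}$ whose kernel is exactly $(e)$. Write $P^-:=P^-_{n,\la}$, $P:=P_{n,\la}$ and $\Gamma:=\Lambda^{\la,-}_n\setminus\Lambda^\la_n$, so that $P_e:=eP^-=\bigoplus_{\gamma\in\Gamma}P^-_n(\gamma)$. The category $\mc{O}_{n,\la}$ is the Serre subcategory of $\mc{O}_{n,\la}^-$ consisting of the modules all of whose composition factors are of the form $L_n(\mu)$ with $\mu\in\Lambda^\la_n$, and for $n\in\N$ every object of $\mc{O}_{n,\la}^-$ has finite length; hence each $M\in\mc{O}_{n,\la}^-$ has a largest quotient $T(M)$ lying in $\mc{O}_{n,\la}$. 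The assignment $M\mapsto T(M)$ is a functor, left adjoint to the exact inclusion $\mc{O}_{n,\la}\hookrightarrow\mc{O}_{n,\la}^-$, with surjective unit $q_M\colon M\twoheadrightarrow T(M)$; being a left adjoint of an exact functor, $T$ preserves projectivity. Since $T(P^-_n(\mu))$ is then projective in $\mc{O}_{n,\la}$ with top $L_n(\mu)$, it is the projective cover $P_n(\mu)$ for $\mu\in\Lambda^\la_n$ by \propref{Oproj-finite}, whereas $T(P^-_n(\gamma))=0$ for $\gamma\in\Gamma$ since $L_n(\gamma)\notin\mc{O}_{n,\la}$. Consequently $T(P^-)\cong P$, and $\bar T\colon R^-_{n,\la}={\rm End}_{\G_n}(P^-)\to{\rm End}_{\G_n}(P)=R_{n,\la}$, $f\mapsto T(f)$, is a unital algebra homomorphism.

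Surjectivity of $\bar T$ follows by a standard lifting: given $g\in R_{n,\la}$, projectivity of $P^-$ lets us lift $g\circ q_{P^-}$ along the epimorphism $q_{P^-}$ to some $f\in R^-_{n,\la}$ with $q_{P^-}\circ f=g\circ q_{P^-}$, and naturality of the unit gives $q_{P^-}\circ f=\bar T(f)\circ q_{P^-}$, whence $\bar T(f)=g$. For the kernel, the inclusion $(e)\subseteq\ker\bar T$ is clear because $e$ factors through $P_e$ and $T(P_e)=0$, so $\bar T(e)=0$ and $\ker\bar T\supseteq(e)$ as $\ker\bar T$ is a two-sided ideal. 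For the reverse inclusion I would rewrite both ideals as Hom-spaces. Using naturality and that $q_{P^-}$ is epi, $\bar T(f)=0$ iff $q_{P^-}\circ f=0$ iff ${\rm im}\,f\subseteq\ker q_{P^-}$, so $\ker\bar T$ is identified with ${\rm Hom}_{\G_n}(P^-,\ker q_{P^-})$; and the two-sided ideal $(e)=R^-_{n,\la}\,e\,R^-_{n,\la}$ is identified with ${\rm Hom}_{\G_n}(P^-,\mathrm{Tr}_{P_e}(P^-))$, where $\mathrm{Tr}_{P_e}(P^-)=\sum_{\phi\colon P_e\to P^-}{\rm im}\,\phi$, the equality $(e)=\{f\mid{\rm im}\,f\subseteq\mathrm{Tr}_{P_e}(P^-)\}$ being the usual description of a trace ideal (one direction is obvious, the other uses projectivity of $P^-$ to factor such an $f$ through a sum of copies of $P_e$). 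Thus $\ker\bar T=(e)$ follows once we establish the single submodule equality $\ker q_{P^-}=\mathrm{Tr}_{P_e}(P^-)$.

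The main obstacle is this last equality. The inclusion $\mathrm{Tr}_{P_e}(P^-)\subseteq\ker q_{P^-}$ is easy: for $\gamma\in\Gamma$, any $\phi\colon P^-_n(\gamma)\to P^-$ satisfies $q_{P^-}\circ\phi=0$, since $q_{P^-}\circ\phi\in{\rm Hom}_{\G_n}(P^-_n(\gamma),T(P^-))$ and this space vanishes because $T(P^-)\in\mc{O}_{n,\la}$ has no composition factor $L_n(\gamma)$ with $\gamma\notin\Lambda^\la_n$. For the reverse inclusion it suffices to prove $P^-/\mathrm{Tr}_{P_e}(P^-)\in\mc{O}_{n,\la}$, as $\ker q_{P^-}$ is the smallest submodule with quotient in $\mc{O}_{n,\la}$. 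Suppose not; then $P^-/\mathrm{Tr}_{P_e}(P^-)$ has a composition factor $L_n(\gamma)$ with $\gamma\in\Gamma$, so ${\rm Hom}_{\G_n}(P^-_n(\gamma),P^-/\mathrm{Tr}_{P_e}(P^-))\neq0$; lifting a nonzero such map through $P^-\twoheadrightarrow P^-/\mathrm{Tr}_{P_e}(P^-)$ by projectivity of $P^-_n(\gamma)$ produces a nonzero $\psi\colon P^-_n(\gamma)\to P^-$, but ${\rm im}\,\psi\subseteq\mathrm{Tr}_{P_e}(P^-)$ forces $\psi$ to vanish after the projection, a contradiction. Hence all composition factors of $P^-/\mathrm{Tr}_{P_e}(P^-)$ lie in $\Lambda^\la_n$, giving $\ker q_{P^-}=\mathrm{Tr}_{P_e}(P^-)$ and therefore $R_{n,\la}\cong R^-_{n,\la}/(e)$. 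I note that the same conclusion can be packaged purely algebraically: under the equivalence $\mc{O}_{n,\la}^-\simeq\mathrm{mod}\text{-}R^-_{n,\la}$ the Serre subcategory $\mc{O}_{n,\la}$ corresponds to $\mathrm{mod}\text{-}\bigl(R^-_{n,\la}/(e)\bigr)$ and $P$ to the regular module $R^-_{n,\la}/(e)$, whose endomorphism algebra is $R^-_{n,\la}/(e)$.
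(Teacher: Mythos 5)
Your proof is correct and follows essentially the same strategy as the paper's: the homomorphism is induced by the quotient maps $P^-_n(\mu)\twoheadrightarrow P_n(\mu)$, surjectivity comes from lifting along these epimorphisms, and the reverse inclusion $\ker\subseteq(e)$ is obtained by factoring any endomorphism whose image lies in the kernel of $P^-_{n,\la}\twoheadrightarrow P_{n,\la}$ through projectives indexed by $\Lambda^{\la,-}_n\setminus\Lambda^{\la}_n$. The only difference is packaging: the paper reads off that $\ker\pi_\gamma=N_\gamma$ is an epimorphic image of such projectives directly from the Verma-flag construction in \propref{Oproj-finite}, whereas you rederive this independently via the trace submodule $\mathrm{Tr}_{P_e}(P^-_{n,\la})$ and the adjoint functor $T$.
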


\begin{proof} For $\mu\in\Lambda^{\la}_n$, let $N_\mu$ be the submodule of $P^-_n(\mu)$ defined in the proof of \propref{Oproj-finite} such  that  $P_n(\mu)= P^-_n(\mu)/N_\mu$ and let $$\pi_\mu\,:\, P^-_n(\mu)\longrightarrow P^-_n(\mu)/N_\mu=P_n(\mu)$$ be the canonical map.
For $\mu\in  \Lambda_n^{\la,-}\backslash\Lambda^{\la}_n$, let $N_\mu=P^-_n(\mu)$ and let $$\pi_\mu\,:\, P^-_n(\mu)\longrightarrow P^-_n(\mu)/N_\mu=0$$ be the zero map. Let $\pi:=\sum_{\gamma\in \Lambda_n^{\la,-}}\pi_\gamma$. For $f\in R^-_{n,\la}$, it is clear that $\pi\circ f$ sends $\bigoplus_{\mu\in \Lambda_n^{\la,-}} N_\mu$ to zero. Therefore $\pi\circ f$  factors through an unique map $\overline{f}\in {\rm End}_{\G_n}(P_{n, \la})$.  Define $\varphi\,:\, R^-_{n,\la}\longrightarrow R_{n,\la}$ by letting $\phi$ send $f$ to $\overline{f}$. By uniqueness, $\varphi$ is a homomorphism of algebras and the ideal $(e)$ is contained in the kernel of $\varphi$.

For $\mu,\gamma\in \Lambda^{\la}_n$ and $g\in \Hom_{\G_n}(P_n(\mu), P_n(\gamma))$, there is an $f\in \Hom_{\G_n}(P^-_n(\mu), P^-_n(\gamma))$ such that $g\circ \pi_\mu =\pi_\gamma \circ f$ since $\pi_\gamma$ is an epimorphism. Therefore $\varphi(f)=g$. Hence $\varphi$ is an epimorphism. For $\mu,\gamma\in\Lambda^{\la}_n$, let $f\in \Hom_{\G_n}(P^-_n(\mu), P^-_n(\gamma))$ such that $\varphi(f)=0$. Then $\pi_\gamma\circ f=0$ and hence $f(P^-_n(\mu))\subseteq N_\gamma$. From the proof of \propref{Oproj-finite}, there are projective module $\bigoplus^r_{i=s+1}P_n(\gamma_i)$ and an epimorphism $h$ from $\bigoplus^r_{i=s+1}P_n(\gamma_i)$ onto $N_\gamma$ with each $\gamma_i\in \Lambda_n^{\la,-}\backslash\Lambda^{\la}_n$. Therefore there is a homomorphism $g\in \Hom_{\G_n}(P^-_n(\mu), \bigoplus^r_{i=s+1}P_n(\gamma_i))$ such that $h\circ g=f$. Therefore $f\in (e)$ and hence  $R_{n,\la}\cong  R^-_{n,\la}/(e)$.
\end{proof}

\begin{prop}\label{R-Kosz} For $n\in \N$ and a dominant integral weight $\la\in X_n$,  $R_{n,\la}$ is a Koszul algebra.
\end{prop}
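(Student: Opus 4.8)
The plan is to prove that $R_{n,\la}$ is Koszul by reducing to the already-established Koszulity of the block $R^-_{n,\la}$ of the ordinary parabolic category $\mc O^-_n$. By \cite{Ba99} (invoked in the introduction), the endomorphism algebra $R^-_{n,\la}$ of the minimal projective generator of the block $\mc O^-_{n,\la}$ carries a Koszul grading. By \lemref{R-R} we have $R_{n,\la}\cong R^-_{n,\la}/(e)$, where $e=\sum_{\gamma\in \Lambda_n^{\la,-}\setminus\Lambda^\la_n}e^-_{n,\gamma}$ is a sum of primitive idempotents. Thus the task is to show that passing to such an idempotent quotient (equivalently, deleting a subset of the vertices from the underlying quasi-hereditary/highest weight structure) preserves Koszulity.

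First I would recall the standard fact that Koszulity of a positively graded algebra is equivalent to the purity of its $\mathrm{Ext}$-quiver: $A=\bigoplus_{i\ge 0}A_i$ with semisimple $A_0$ is Koszul if and only if $\mathrm{Ext}^i_A(A_0,A_0)$ is concentrated in internal degree $i$ for every $i$. This reformulation is the cleanest route because it is manifestly compatible with idempotent truncation. The key step is to transfer the grading: the idempotent $e$ is a sum of the $e^-_{n,\gamma}$, each of which is homogeneous of degree $0$ in the Koszul grading on $R^-_{n,\la}$ (idempotents landing in $A_0$), so the two-sided ideal $(e)=R^-_{n,\la}\,e\,R^-_{n,\la}$ is a graded ideal. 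Consequently the quotient $R_{n,\la}\cong R^-_{n,\la}/(e)$ inherits a well-defined positive grading with semisimple degree-zero part $\bigoplus_{\mu\in\Lambda^\la_n}\C e_{n,\mu}$.

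The main content is then to check that this inherited grading is Koszul. I would argue via the highest weight (quasi-hereditary) structure: passing from $\Lambda^{\la,-}_n$ to the subset $\Lambda^\la_n$ corresponds to deleting a poset coideal of weights, which is exactly the operation of truncating a quasi-hereditary algebra with respect to a saturated subset of the index poset. By \lemref{le} the deleted weights $\gamma\in\Lambda_n^{\la,-}\setminus\Lambda^\la_n$ satisfy $\gamma\nleqslant\mu$ for every $\mu\in\Lambda^\la_n$, so $\Lambda^\la_n$ is downward closed in $(\Lambda^{\la,-}_n,\le)$; this is precisely the condition guaranteeing that the idempotent quotient remains standardly stratified with standard objects $\Delta_n(\mu)$ for $\mu\in\Lambda^\la_n$. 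For such truncations of a Koszul quasi-hereditary algebra, the graded $\mathrm{Ext}$-groups between simples in the quotient coincide with those computed in the ambient algebra, so the purity $\mathrm{ext}^i(L,L')$ concentrated in degree $i$ is preserved. Concretely, I would verify that $\mathrm{Ext}^i_{R_{n,\la}}(L_\mu,L_\nu)\cong \mathrm{Ext}^i_{R^-_{n,\la}}(L_\mu,L_\nu)$ for $\mu,\nu\in\Lambda^\la_n$ and that these carry the inherited internal grading, whence the Koszulity criterion transports verbatim.

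The hard part will be justifying that the inherited grading on $R_{n,\la}$ really is a \emph{Koszul} grading and not merely \emph{a} positive grading compatible with the algebra structure: one must ensure no extension degrees are created or destroyed under the truncation. The cleanest way to close this gap is to combine the $\mathrm{Ext}$-algebra description of Koszulity with the fact, available from the preceding highest weight category framework, that $\mc O_n$ has a Kazhdan--Lusztig theory; the parity vanishing \eqnref{van} together with the agreement of $\mathrm{Ext}$-groups between standard and simple objects across $\mc O^-_n$ and $\mc O_n$ forces the graded Ext-quiver of $R_{n,\la}$ to be pure, which is Koszulity. Alternatively, and perhaps more robustly, I would cite the general principle that an idempotent truncation $eAe$ (or a graded quotient $A/(e)$ by a degree-zero idempotent) of a Koszul algebra along a saturated subset of a directed highest weight poset is again Koszul, applying it to the graded isomorphism of \lemref{R-R}.
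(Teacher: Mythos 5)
Your argument is correct and follows essentially the same route as the paper: reduce to Backelin's Koszulity of $R^-_{n,\la}$, use \lemref{R-R} together with the fact (via \lemref{le}) that $\Lambda^{\la}_n$ is downward closed in $\Lambda^{\la,-}_n$ to realize $R_{n,\la}$ as the graded idempotent quotient $R^-_{n,\la}/(e)$ whose Ext-groups between simples agree with those computed over $R^-_{n,\la}$ (the paper packages this as \cite[Proposition A3.3]{Don98}), and conclude Koszulity from purity of the Ext-algebra (the paper's \cite[Lemma 2.2]{SVV}). There is no substantive difference in approach.
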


\begin{proof} First we observe that for a given module $M \in \mc{O}^-_n$, we have that $M$ belonging to $\mc{O}_n$ if and only if $\Hom_{\G_n}(P^-_n(\mu), M)=0$ for all $\mu\in \Lambda_n^{\la,-}\backslash\Lambda^{\la}_n$. Therefore, the equivalence  $\Hom_{\G_n}(P_{n,\la}^-,\cdot) : \mc{O}^-_n \longrightarrow \text{Mof-}R^-_{n,\la}$ identifies $\mc{O}_n$ with the full subcategory of $\text{Mof-}R^-_{n,\la}$ consisting of the modules killed by the ideal $(e)$ defined in \lemref{R-R}. Therefore $\mc{O}_n$ is equivalent to $\text{ Mof-}R_{n,\la}$. By \lemref{R-R} and \cite[Proposition A3.3]{Don98}, we have
$${\rm Ext}^j_{\mc{O}_n}( M,  N)={\rm Ext}^j_{\mc{O}^-_n}( M, N), \quad\text{for all $M,\, N\in \mc{O}_n$}.$$
Now the proposition follows from \cite[Proposition 3.2]{Ba99} and \cite[Lemma 2.2]{SVV}.
\end{proof}

\begin{cor}\label{En-Kos}
For $n\in \N$, the extension algebra
$${\rm Ext}^\bullet_{\mc{O}_n}(\bigoplus_{\mu\in\Lambda^{\la}_n} L_n(\mu), \bigoplus_{\mu\in\Lambda^{\la}_n} L_n(\mu))$$
is a Koszul algebra.
\end{cor}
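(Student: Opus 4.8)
The plan is to deduce this from \propref{R-Kosz} together with the basic Koszul duality theory of \cite{BGS96}. Recall from the proof of \propref{R-Kosz} that the functor $\Hom_{\G_n}(P_{n,\la},\cdot)$ furnishes an equivalence of abelian categories $\mc{O}_n \simeq \text{Mof-}R_{n,\la}$. Since Yoneda extension groups depend only on the ambient abelian structure, this equivalence transports the extension algebra ${\rm Ext}^\bullet_{\mc{O}_n}(\bigoplus_{\mu\in\Lambda^\la_n} L_n(\mu), \bigoplus_{\mu\in\Lambda^\la_n} L_n(\mu))$ isomorphically onto the corresponding extension algebra computed over $R_{n,\la}$.

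First I would identify the image of the simple objects under this equivalence. The irreducibles $L_n(\mu)$, $\mu\in\Lambda^\la_n$, correspond to a complete set of pairwise non-isomorphic simple $R_{n,\la}$-modules, and their direct sum is isomorphic, as an $R_{n,\la}$-module, to the semisimple quotient $(R_{n,\la})_0 \cong R_{n,\la}/(R_{n,\la})_{>0}$ afforded by the Koszul grading produced in \propref{R-Kosz}. Hence
\[
{\rm Ext}^\bullet_{\mc{O}_n}\Big(\bigoplus_{\mu\in\Lambda^\la_n} L_n(\mu), \bigoplus_{\mu\in\Lambda^\la_n} L_n(\mu)\Big) \cong {\rm Ext}^\bullet_{R_{n,\la}}\big((R_{n,\la})_0, (R_{n,\la})_0\big),
\]
and the right-hand side is exactly the Koszul dual algebra of $R_{n,\la}$.

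Finally I would invoke the fundamental theorem of Koszul duality: the Koszul dual of a Koszul algebra is again a Koszul algebra \cite[Theorem 2.10.1]{BGS96}. Applying this to the Koszul algebra $R_{n,\la}$ of \propref{R-Kosz} shows that ${\rm Ext}^\bullet_{R_{n,\la}}((R_{n,\la})_0,(R_{n,\la})_0)$ is Koszul, and the displayed isomorphism then yields the claim. The only point requiring care is the bookkeeping of left/right and opposite-algebra conventions relating $\mc{O}_n$, $\text{Mof-}R_{n,\la}$ and $R_{n,\la}^{\rm op}$; this is harmless because an algebra is Koszul if and only if its opposite is \cite[Proposition 2.2.1]{BGS96}, and because the natural cohomological grading on the extension algebra coincides with the Koszul grading precisely when $R_{n,\la}$ is Koszul. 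I do not expect a substantive obstacle here: the essential work lies in \propref{R-Kosz}, and the corollary is a formal consequence of Koszul duality.
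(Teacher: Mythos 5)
Your argument is correct and is precisely the one the paper intends: the corollary is stated as an immediate consequence of \propref{R-Kosz}, the point being that under the equivalence $\mc{O}_n\simeq\text{Mof-}R_{n,\la}$ the extension algebra of the sum of simples becomes ${\rm Ext}^\bullet_{R_{n,\la}}\big((R_{n,\la})_0,(R_{n,\la})_0\big)$, i.e.\ the Koszul dual of $R_{n,\la}$, which is Koszul by \cite[Theorem 2.10.2]{BGS96} (the same citation the paper uses later in the proof of \lemref{ExtLonto}). No substantive difference from the paper's route.
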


\begin{lem}\label{Ext1mono}
For $0\le k< n \le \infty$ and $\mu,\gamma\in \mc{P}_k$, the truncation
 \begin{align*}\mf{tr}^n_k: {\rm Ext}^1_{\mc{O}_n}(L_n(\mu), L_n(\gamma))\longrightarrow {\rm Ext}^1_{\mc{O}_k}(L_k(\mu), L_k(\gamma)).
  \end{align*}
is a monomorphism.
\end{lem}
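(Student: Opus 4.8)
The plan is to resolve $L_n(\mu)$ one step by its parabolic Verma module and thereby transport the problem to the $\mathrm{Ext}$-groups out of Verma modules, for which the effect of truncation is already understood. Write $R_n(\mu):={\rm rad}\,\Delta_n(\mu)$ for the unique maximal submodule; since $\mc{O}_n$ is abelian, $R_n(\mu)$ lies in $\mc{O}_n$ and we have the short exact sequence
\[
0 \longrightarrow R_n(\mu) \longrightarrow \Delta_n(\mu) \longrightarrow L_n(\mu) \longrightarrow 0
\]
in $\mc{O}_n$. Because $\mu\in\mc{P}_k$ and $\mf{tr}^n_k$ is exact, \lemref{lem:trunc} identifies $\mf{tr}^n_k(\Delta_n(\mu))=\Delta_k(\mu)$ and $\mf{tr}^n_k(L_n(\mu))=L_k(\mu)$, and hence $\mf{tr}^n_k(R_n(\mu))={\rm rad}\,\Delta_k(\mu)=R_k(\mu)$; likewise $\mf{tr}^n_k(L_n(\gamma))=L_k(\gamma)$ since $\gamma\in\mc{P}_k$.

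Next I would apply \lemref{lem::ShortInduceLong} to this short exact sequence with $N=L_n(\gamma)$. This produces a commutative ladder with exact rows whose vertical maps are $\mf{tr}^n_k$; its relevant four-term segment has, in the top row, the groups $\Hom_{\mc{O}_n}(\Delta_n(\mu),L_n(\gamma))$, $\Hom_{\mc{O}_n}(R_n(\mu),L_n(\gamma))$, $\Ext^1_{\mc{O}_n}(L_n(\mu),L_n(\gamma))$, $\Ext^1_{\mc{O}_n}(\Delta_n(\mu),L_n(\gamma))$, sitting over the analogous $\mc{O}_k$-groups. By \propref{CoHomologiesMatch}(ii), applied with $i=0$ and $i=1$, the first and the fourth vertical maps are isomorphisms. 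The Four Lemma then forces the third vertical map, namely $\mf{tr}^n_k$ on $\Ext^1_{\mc{O}_n}(L_n(\mu),L_n(\gamma))$, to be a monomorphism, provided the second vertical map $\mf{tr}^n_k\colon \Hom_{\mc{O}_n}(R_n(\mu),L_n(\gamma))\to \Hom_{\mc{O}_k}(R_k(\mu),L_k(\gamma))$ is a monomorphism.

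The only nonformal point, and the step I expect to be the crux, is exactly this injectivity. Here $R_n(\mu)$ need not admit a finite Verma flag, so neither \propref{stExt} nor \propref{Don} applies directly. Instead I would exploit the simplicity of the target: any nonzero $\phi\colon R_n(\mu)\to L_n(\gamma)$ is surjective, and since $\mf{tr}^n_k$ is exact it sends epimorphisms to epimorphisms, so $\mf{tr}^n_k(\phi)$ is an epimorphism onto $L_k(\gamma)$, which is nonzero because $\gamma\in\mc{P}_k$. Hence $\mf{tr}^n_k(\phi)\neq 0$, giving injectivity of the second vertical map. With all three inputs in hand, the Four Lemma completes the proof.
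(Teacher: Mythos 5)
Your argument is correct, but it is not the route the paper takes. The paper's proof is a short direct one: represent a nonzero class by a non-split extension $0\to L_n(\gamma)\to M\to L_n(\mu)\to 0$, use the fact that $\mf{tr}^n_k$ commutes with the duality functor $\vee$ to reduce to the case $\mu\nless\gamma$, observe that $M$ is then a highest weight module of highest weight $\mu$, and conclude that the truncated sequence is still non-split (a highest weight module truncates to a highest weight module, which cannot be a direct sum with $L_k(\gamma)\neq 0$ as a summand). You instead dimension-shift along $0\to\mathrm{rad}\,\Delta_n(\mu)\to\Delta_n(\mu)\to L_n(\mu)\to 0$, feed this into \lemref{lem::ShortInduceLong}, and close with the Four Lemma; your two nontrivial inputs are \propref{CoHomologiesMatch}(ii) for $i=0,1$ and the injectivity of truncation on $\Hom_{\mc{O}_n}(\mathrm{rad}\,\Delta_n(\mu),L_n(\gamma))$, which you correctly extract from irreducibility of the target together with exactness of $\mf{tr}^n_k$; the identification $\mf{tr}^n_k(\mathrm{rad}\,\Delta_n(\mu))=\mathrm{rad}\,\Delta_k(\mu)$ also holds, by exactness and \lemref{lem:trunc}. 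Both proofs are sound. The paper's is more economical, using only exactness of truncation and the duality functor; yours avoids the duality/WLOG step, fits the Four-Lemma pattern already used in \propref{stExt}, and makes explicit that the real content is the behaviour of $\Ext^i(\Delta_n(\mu),-)$ for $i=0,1$ under truncation.
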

\begin{proof}
Let
$$0\longrightarrow L_n(\gamma)\longrightarrow M \longrightarrow L_n(\mu)\longrightarrow 0$$
be an exact sequence corresponding to a nonzero element in ${\rm Ext}^1_{\mc{O}_n}(L_n(\mu), L_n(\gamma))$. Since the truncation functor $\mf{tr}^n_k$ commutes with the duality functor $\vee$ and the dual of a non-split short exact sequence is also non-split, we may assume that $\mu\nless\gamma$.  Then $M$ is a highest weight module of highest weight $\mu$ and hence the truncation of the short exact sequence is also non-split.
\end{proof}

For $n\in\N$, let
$$E_{n,\la}:={\rm Ext}^\bullet_{\mc{O}_n}(\bigoplus_{\mu\in\Lambda^{\la}_n} L_n(\mu),\bigoplus_{\mu\in\Lambda^{\la}_n} L_n(\mu))$$

\begin{lem}\label{ExtLonto}
For $0\le k< n < \infty$, we have a graded epimorphism of graded algebras
 \begin{align*} &\eta^n_k: {\rm Ext}^\bullet_{\mc{O}_n}(\bigoplus_{\mu\in\Lambda^{\la}_k} L_n(\mu), \bigoplus_{\mu\in\Lambda^{\la}_k} L_n(\mu))\longrightarrow E_{k,\la}
  \end{align*}
 induced from $\mf{tr}^n_k$. In particular, we have
$$\mf{tr}^n_k: {\rm Ext}^j_{\mc{O}_n}( L_n(\mu),  L_n(\gamma))\longrightarrow {\rm Ext}^j_{\mc{O}_k}( L_k(\mu), L_k(\ga)) $$
is an epimorphism for all $\mu, \gamma\in\Lambda^{\la}_k$ and $j\ge 0$.
\end{lem}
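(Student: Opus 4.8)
The plan is to leverage the Koszulity of $E_{k,\la}$ established in \corref{En-Kos}: a Koszul algebra is generated over its degree-zero part by its degree-one part, so it will suffice to prove that the graded algebra homomorphism $\eta^n_k$ is surjective in degrees $0$ and $1$. First I would check that $\eta^n_k$ really is a homomorphism of graded algebras. Since $\mf{tr}^n_k$ is exact and, by \lemref{lem:trunc}, carries $L_n(\mu)$ to $L_k(\mu)$ for every $\mu\in\Lambda^{\la}_k$, applying $\mf{tr}^n_k$ to a Yoneda extension (regarded as a long exact sequence) produces the component maps $\mf{tr}^n_k\colon {\rm Ext}^j_{\mc O_n}(L_n(\mu),L_n(\gamma))\to {\rm Ext}^j_{\mc O_k}(L_k(\mu),L_k(\gamma))$; because Yoneda multiplication is computed by splicing exact sequences and exact functors preserve splicing, these assemble into the degree-preserving algebra map $\eta^n_k$. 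Moreover $\eta^n_k$ respects the decomposition of both algebras into $(\mu,\gamma)$-components, so its surjectivity is equivalent to the surjectivity of each component map; this immediately yields the ``In particular'' assertion once the main statement is proved.

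For the degree-zero part, Schur's lemma gives that both the degree-zero part of $E_{k,\la}$ and the degree-zero part of the source are $\bigoplus_{\mu\in\Lambda^{\la}_k}\C$, and $\eta^n_k$ sends $\mathrm{id}_{L_n(\mu)}$ to $\mathrm{id}_{L_k(\mu)}$, so it is an isomorphism in degree $0$. For the degree-one part I would argue one component at a time. For fixed $\mu,\gamma\in\Lambda^{\la}_k$, the map $\mf{tr}^n_k\colon {\rm Ext}^1_{\mc O_n}(L_n(\mu),L_n(\gamma))\to {\rm Ext}^1_{\mc O_k}(L_k(\mu),L_k(\gamma))$ is injective by \lemref{Ext1mono}, while \lemref{dimExt-L-L} supplies ${\rm dimExt}^1_{\mc O_n}(L_n(\mu),L_n(\gamma))\ge {\rm dimExt}^1_{\mc O_k}(L_k(\mu),L_k(\gamma))$. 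As $n<\infty$, all these dimensions are finite, so the injection whose source has dimension at least that of the target forces equality of dimensions and is therefore an isomorphism. Hence $\eta^n_k$ is surjective in degree $1$ as well, and by the Koszul generation property it is a graded epimorphism.

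The step I expect to be the main obstacle is the passage from low-degree information to all degrees: \lemref{Ext1mono} only delivers injectivity in degree $1$, and there is no evident way to control surjectivity in higher degrees directly, so the argument genuinely depends on invoking Koszulity of $E_{k,\la}$ to restrict attention to the generating degrees. The only other delicate point is the multiplicativity of $\eta^n_k$, which relies on the compatibility of the exact functor $\mf{tr}^n_k$ with the splicing (Yoneda) product; granting that, the quantitative heart of the proof is the single dimension comparison in degree $1$ that upgrades the monomorphism of \lemref{Ext1mono} to an isomorphism.
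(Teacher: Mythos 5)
Your proposal is correct and follows essentially the same route as the paper: both arguments use the Koszulity of $E_{k,\la}$ to reduce surjectivity to degrees $0$ and $1$, and both upgrade the monomorphism of \lemref{Ext1mono} to an isomorphism in degree $1$ via the dimension inequality of \lemref{dimExt-L-L}. The extra details you supply (multiplicativity of $\eta^n_k$ via splicing, and the degree-zero check by Schur's lemma) are points the paper's proof treats as immediate.
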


\begin{proof}
	Let $E_{k}^j={\rm Ext}^j_{\mc{O}_k}(\bigoplus_{\mu\in\Lambda^{\la}_k} L_k(\mu), \bigoplus_{\mu\in\Lambda^{\la}_k} L_k(\mu))$ for $j\ge 0$. By \propref{R-Kosz} and \cite[Theorem 2.10.2]{BGS96}, the extension algebra $\bigoplus_{j\ge 0}E_{k}^j$ is a Koszul algebra and hence   generated by the subspace $E_{k}^1$ over the subalgebra $E_{k}^0$. But
$$\mf{tr}^n_k: {\rm Ext}^1_{\mc{O}_n}( L_n(\mu),  L_n(\gamma))\longrightarrow {\rm Ext}^1_{\mc{O}_k}( L_k(\mu), L_k(\ga)) $$
is an isomorphism for all $\mu, \gamma\in\Lambda^{\la}_k$ by \lemref{dimExt-L-L} and \lemref{Ext1mono}. Certainly, the image of $\eta^n_k$ contains $E_{k}^0$ and hence  $\eta^n_k$ is an epimorphism.
  \end{proof}

For $0\le k< n \le \infty$, let $$e_{n,k}:=\sum_{\mu, \gamma\in  \Lambda_{k}^\la}e_{n,\mu} \qquad \text{and}\qquad R_{n,k,\la}:=e_{n,k}R_{n,\la}e_{n,k}.$$
As a direct consequence of \propref{Oproj-finite}, \corref{Thm::ProjToProj} and \thmref{Thm::Equivlanece}, we have the following proposition. Note that \eqref{BGGre-inf} and \eqref{BGGre} implies
 $(P_n(\mu): \Delta_n(\gamma))=0$ for $\mu \in \Lambda^\la_k$, $\gamma \in \Lambda^\la_n\backslash\Lambda^\la_k$ and  $k< n$.

\begin{prop}\label{EndP=}
For $0\le k<n\le \infty$ and a dominant integral weight $\la \in X_n$, we have an isomorphism of algebras
 \begin{align*}\psi_{k}^{n}\, :\, R_{n,k,\la}\longrightarrow R_{k,\la}
 \end{align*}
 induced from the $\mf{tr}^n_k$.
\end{prop}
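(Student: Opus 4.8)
The plan is to construct the isomorphism $\psi_k^n$ explicitly from the truncation functor $\mf{tr}^n_k$ and verify it is bijective using the results already established. First I would recall that by \corref{Thm::ProjToProj} (and \propref{Oproj-finite} for the finite rank case), we have $\mf{tr}^n_k(P_n(\mu))\cong P_k(\mu)$ for every $\mu\in\Lambda^\la_k$. Consequently, applying $\mf{tr}^n_k$ to the projective generator $P_{n,\la}=\bigoplus_{\mu\in\Lambda^\la_n}P_n(\mu)$ and restricting attention to the summands indexed by $\Lambda^\la_k$, the idempotent truncation $e_{n,k}P_{n,\la}=\bigoplus_{\mu\in\Lambda^\la_k}P_n(\mu)$ is sent to $\bigoplus_{\mu\in\Lambda^\la_k}P_k(\mu)=P_{k,\la}$. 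This already exhibits $R_{k,\la}={\rm End}_{\G_k}(P_{k,\la})$ as the target, so the natural candidate for $\psi^n_k$ is the map on endomorphism rings induced by the functor on this direct summand.

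Next I would identify $R_{n,k,\la}=e_{n,k}R_{n,\la}e_{n,k}$ concretely. Since $R_{n,\la}={\rm End}_{\G_n}(P_{n,\la})$ and $e_{n,k}$ is the sum of projections onto the summands indexed by $\Lambda^\la_k$, the corner algebra $e_{n,k}R_{n,\la}e_{n,k}$ is canonically
$$e_{n,k}R_{n,\la}e_{n,k}\cong\bigoplus_{\mu,\gamma\in\Lambda^\la_k}{\rm Hom}_{\G_n}\big(P_n(\mu),P_n(\gamma)\big).$$
The functor $\mf{tr}^n_k$ then sends a homomorphism $f\in{\rm Hom}_{\G_n}(P_n(\mu),P_n(\gamma))$ to $\mf{tr}^n_k(f)\in{\rm Hom}_{\G_k}(P_k(\mu),P_k(\gamma))$, and taken over all $\mu,\gamma\in\Lambda^\la_k$ this assembles into the algebra homomorphism $\psi^n_k$. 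That $\psi^n_k$ is multiplicative and unital follows from functoriality of $\mf{tr}^n_k$ together with the isomorphisms $\mf{tr}^n_k(P_n(\mu))\cong P_k(\mu)$.

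The heart of the argument is bijectivity, and this is exactly where the equivalence of categories pays off. Each $P_n(\mu)$ for $\mu\in\Lambda^\la_k$ lies in $\mc O^\Delta_{n,k}$: it has a finite Verma flag, and by the remark preceding the proposition (combining \eqref{BGGre-inf} and \eqref{BGGre}), the condition $(P_n(\mu):\Delta_n(\gamma))=0$ for $\gamma\in\Lambda^\la_n\backslash\Lambda^\la_k$ guarantees that all Verma subquotients are indexed by $\mc{P}_k$. Hence I may invoke \propref{stExt}(i), or more directly the full faithfulness half of \thmref{Thm::Equivlanece}, to conclude
$$\mf{tr}^n_k:{\rm Hom}_{\G_n}\big(P_n(\mu),P_n(\gamma)\big)\xrightarrow{\ \sim\ }{\rm Hom}_{\G_k}\big(P_k(\mu),P_k(\gamma)\big)$$
is an isomorphism of vector spaces for each pair $\mu,\gamma\in\Lambda^\la_k$. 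Summing over all such pairs shows $\psi^n_k$ is bijective, completing the proof.

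The main obstacle is making sure that $P_n(\mu)$ really belongs to $\mc O^\Delta_{n,k}$ so that \thmref{Thm::Equivlanece} applies on the nose; this is precisely the content of the multiplicity vanishing $(P_n(\mu):\Delta_n(\gamma))=0$ for $\gamma\notin\Lambda^\la_k$, which the paper flags in the sentence before the statement. Once that membership is secured, the equivalence of categories does all the work and no further computation is required; the algebra structure is automatic from functoriality.
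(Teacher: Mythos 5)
Your proposal is correct and follows essentially the same route the paper takes: the paper states this proposition as a direct consequence of \propref{Oproj-finite}, \corref{Thm::ProjToProj} and \thmref{Thm::Equivlanece}, with the multiplicity vanishing $(P_n(\mu):\Delta_n(\gamma))=0$ for $\gamma\in\Lambda^\la_n\backslash\Lambda^\la_k$ serving precisely to place $P_n(\mu)$ in $\mc O^{\Delta}_{n,k}$ so that full faithfulness of the truncation applies. Your explicit unpacking of the corner algebra and the verification of multiplicativity via functoriality fill in exactly the details the paper leaves implicit.
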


For $n\in\N$ and a dominant integral weight $\la \in X_n$,   let $$F_n\,:\,  \mc O_{n,\la} \longrightarrow \text{Mof-}R_{n,\la}$$ be the equivalence defined by $F_n(\cdot) : = \text{Hom}_{ \mf g_n}(P_{n,\la}, \cdot)$. Note that $\mc O_{n,\la}$ has only finitely many non-isomorphic irreducible modules.
Let  $${S}^n_k: \text{Mof-}R_{n,\la}\longrightarrow \text{Mof-}R_{n,k,\la}$$ denote the {\em Schur} functor by defining ${S}^n_k(M):= Me_{n,k}$, for $M \in \text{Mof-}R_{n,\la}$ and ${S}^n_k(f)\in \Hom_{R_{n,k,\la}}(Me_{n,k}, Ne_{n,k})$, to be the restriction of $f$ to $Me_{n,k}$ for $f\in \Hom_{R_{n,\la}}(M, N)$. Recall that $\psi_k^{n}$ is an isomorphism defined in \propref{EndP=}. Let $$\psi_k^{n*}\,:\,\text{Mof-}R_{k,\la}\longrightarrow \text{Mof-}R_{n,k,\la}$$ denote the functor induced from the isomorphism $\psi_k^{n}$ defined by $\psi_k^{n*}(M)=M$ for $M \in \text{Mof-}R_{k,\la}$ with the action of $R_{n,k,\la}$ given by letting $$va:=v\psi_k^{n}(a),$$ for $v\in M$ and $a\in R_{n,k,\la}$,  and $\psi_k^{n*}(f)=f$ for $f\in \Hom_{R_{k,\la}}(M, N)$.

\begin{prop} \label{TrSchur}
 There is a natural isomorphism from the functor $S^n_k\circ F_n$ to  the functor $\psi_k^{n*}\circ F_k \circ \mf{tr}^n_k$.
\end{prop}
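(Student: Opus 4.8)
The plan is to evaluate both functors on an object $M\in\mc O_{n,\la}$, identify the resulting right $R_{n,k,\la}$-modules, and then check naturality in $M$. Throughout I write $P_{n,k,\la}:=e_{n,k}P_{n,\la}=\bigoplus_{\mu\in\Lambda^\la_k}P_n(\mu)$, so that $R_{n,k,\la}=e_{n,k}R_{n,\la}e_{n,k}\cong{\rm End}_{\G_n}(P_{n,k,\la})$. First I would record the standard idempotent-truncation identity
\[
S^n_k\big(F_n(M)\big)=\Hom_{\G_n}(P_{n,\la},M)\,e_{n,k}\;\cong\;\Hom_{\G_n}(P_{n,k,\la},M),
\]
as right $R_{n,k,\la}$-modules: an element $\phi e_{n,k}$ of the left-hand side is determined by its restriction to $e_{n,k}P_{n,\la}=P_{n,k,\la}$, and on both sides the right $R_{n,k,\la}$-action is precomposition with endomorphisms of $P_{n,k,\la}$. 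This identification is manifestly natural in $M$.

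Next I would bring in truncation. Using the remark preceding \propref{EndP=}, namely $(P_n(\mu):\Delta_n(\gamma))=0$ for $\mu\in\Lambda^\la_k$ and $\gamma\in\Lambda^\la_n\setminus\Lambda^\la_k$, every Verma subquotient of $P_{n,k,\la}$ has highest weight in $\Lambda^\la_k\subseteq\mc{P}_k$; hence $P_{n,k,\la}\in\mc O^\Delta_{n,k}$. Moreover $\mf{tr}^n_k(P_{n,k,\la})\cong P_{k,\la}$ by \propref{Oproj-finite} (for finite $n$) and \corref{Thm::ProjToProj} (for $n=\infty$). Applying \propref{stExt}(i) to $P_{n,k,\la}\in\mc O^\Delta_{n,k}$ and $M\in\mc O_n$ then gives an isomorphism of abelian groups, natural in $M$,
\[
\mf{tr}^n_k:\Hom_{\G_n}(P_{n,k,\la},M)\stackrel{\sim}{\longrightarrow}\Hom_{\G_k}(P_{k,\la},\mf{tr}^n_k(M))=F_k\big(\mf{tr}^n_k(M)\big).
\]

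The main point to verify is that this last isomorphism is $R_{n,k,\la}$-linear once the target is equipped with the twisted action coming from $\psi_k^{n*}\circ F_k\circ\mf{tr}^n_k$. Here the key observation is that, by \propref{EndP=}, the isomorphism $\psi^n_k\colon R_{n,k,\la}\to R_{k,\la}$ is nothing but $\mf{tr}^n_k$ acting on endomorphisms. Thus for $\phi\in\Hom_{\G_n}(P_{n,k,\la},M)$ and $a\in R_{n,k,\la}\cong{\rm End}_{\G_n}(P_{n,k,\la})$, functoriality of $\mf{tr}^n_k$ yields
\[
\mf{tr}^n_k(\phi\circ a)=\mf{tr}^n_k(\phi)\circ\mf{tr}^n_k(a)=\mf{tr}^n_k(\phi)\circ\psi^n_k(a),
\]
which is precisely the $R_{n,k,\la}$-action on $\psi_k^{n*}(F_k(\mf{tr}^n_k(M)))$ defined by $v\cdot a=v\,\psi^n_k(a)$. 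Composing the natural isomorphism of the first paragraph with that of the second therefore produces the required natural isomorphism $S^n_k\circ F_n\cong\psi_k^{n*}\circ F_k\circ\mf{tr}^n_k$; the commuting-square checks for a morphism $f\colon M\to M'$ in $\mc O_{n,\la}$ reduce to functoriality of $\Hom$ and of $\mf{tr}^n_k$.

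I expect the only genuinely delicate point to be bookkeeping of the module twist by $\psi^n_k$—everything else is an assembly of \propref{stExt}, \corref{Thm::ProjToProj}, \propref{Oproj-finite} and \propref{EndP=}. Accordingly, I would take care to exhibit that the two occurrences of $\mf{tr}^n_k$ appearing above (on objects via \propref{stExt}, and on endomorphisms via the defining property of $\psi^n_k$) are literally the same functor, since it is exactly this coincidence that makes the equivariance automatic rather than requiring a separate computation.
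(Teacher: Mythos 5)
Your proposal is correct and follows essentially the same route as the paper's proof: identify $S^n_k(F_n(M))$ with $\Hom_{\G_n}(e_{n,k}P_{n,\la},M)$, transport it via the truncation functor to $\Hom_{\G_k}(P_{k,\la},\mf{tr}^n_k(M))$, and use functoriality of $\mf{tr}^n_k$ together with the definition of $\psi^n_k$ to get $R_{n,k,\la}$-equivariance and naturality. Your explicit verification that $e_{n,k}P_{n,\la}$ lies in $\mc O^{\Delta}_{n,k}$ (so that \propref{stExt} applies) is a point the paper leaves implicit, but otherwise the arguments coincide.
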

\begin{proof}
For a given $M \in \mc O_{n,\la}$, we have $${S}^n_k(F_n(M)) = \Hom_{\mf g_n}(P_{n,\la},M)e_{n,k},$$ and $$\psi_k^{n*}\circ F_k \circ \mf{tr}^n_k(M)= \Hom_{\mf g_k}(P_{k,\la}, \mf{tr}^n_k (M)).$$
By \corref{Thm::ProjToProj} and \propref{Don}, we have an isomorphism
  $$\theta_M\,:\, \Hom_{\mf g_n}(P_{n,\la},M)e_{n,k}\longrightarrow\Hom_{\mf g_k}(P_{k,\la}, \mf{tr}^n_k (M))$$
  of vector spaces induced from $\mf{tr}^n_k$. Since $\mf{tr}^n_k$ is a functor, the linear isomorphism $\theta_M$ becomes an isomorphism of right $R_{n,k,\la}$-modules. Now it is also clear that $$(\psi_k^{n*}\circ F_k \circ \mf{tr}^n_k)(f)\circ\theta_M=\theta_N\circ (S^n_k\circ F_n)(f)$$ for all $f\in \Hom_{\mc{O}_n}(M,N)$ and $M, N \in \mc O_{n,\la}$.
\end{proof}

For $n\in \N$ and a dominant integral weight $\la\in X_n$, $R_{n,\la}$ is a Koszul algebra by \propref{R-Kosz}.  We fix the grading on $R_{n,\la}$ to be the Koszul grading and regard $R_{n,k,\la}$ as the graded subalgebra of $R_{n,\la}$ for each $k<n$. Note that each element $e_{n,k}$ is an idempotent element of degree zero.

\begin{prop}\label{gr-R=eRe}
For $0\le k<n< \infty$ and a dominant integral weight $\la \in X_n$, the isomorphism of algebras
 \begin{align*}\psi_{k}^{n}\, :\, R_{n,k,\la}\longrightarrow R_{k,\la}
 \end{align*}
obtained in \propref{EndP=} is a graded isomorphism.
\end{prop}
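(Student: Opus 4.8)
The plan is to reduce the statement to the compatibility of $\psi_k^n$ with the degree-zero and degree-one components, and then to bootstrap to all degrees using that a Koszul algebra is generated in degree one. Recall from \propref{R-Kosz} that $R_{n,\la}$ and $R_{k,\la}$ are Koszul, so, writing $A_{>0}=\bigoplus_{i\ge 1}A_i$ for the radical, one has $(A_{>0})^i=\bigoplus_{j\ge i}A_j$ for $A=R_{n,\la}$ and $A=R_{k,\la}$; in particular each is generated over its semisimple part $A_0$ by $A_1$. Since $e_{n,k}$ is a homogeneous idempotent of degree zero, $R_{n,k,\la}=e_{n,k}R_{n,\la}e_{n,k}$ is a graded subalgebra with $(R_{n,k,\la})_i=e_{n,k}(R_{n,\la})_ie_{n,k}$, and it suffices to prove that $\psi_k^n$ carries $(R_{n,k,\la})_0$ onto $(R_{k,\la})_0$ and $(R_{n,k,\la})_1$ onto $(R_{k,\la})_1$, once we know that $R_{n,k,\la}$ is itself generated in degrees $0$ and $1$.

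I would first dispose of degree zero. Both $(R_{n,k,\la})_0$ and $(R_{k,\la})_0$ are the spans of the primitive idempotents indexed by $\Lambda^{\la}_k$, and since $\psi_k^n$ is induced by $\mf{tr}^n_k$ (\propref{EndP=}), which realizes the bijection $L_n(\mu)\leftrightarrow L_k(\mu)$, it sends $e_{n,\mu}$ to $e_{k,\mu}$ for $\mu\in\Lambda^{\la}_k$; hence $\psi_k^n((R_{n,k,\la})_0)=(R_{k,\la})_0$. For the generation of $R_{n,k,\la}$ in degrees $0$ and $1$, I would use the vanishing $(P_n(\mu):\Delta_n(\gamma))=0$ for $\mu\in\Lambda^{\la}_k$ and $\gamma\in\Lambda^{\la}_n\setminus\Lambda^{\la}_k$ recorded before \propref{EndP=}. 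Because $(P_n(\mu):\Delta_n(\gamma))\ne 0$ forces $\mu\le\gamma$, this vanishing is a saturation (coideal-type) condition on $\Lambda^{\la}_k$ inside the highest-weight poset $(\Lambda^{\la}_n,\le)$, and it ensures that no path of the quiver of $R_{n,\la}$ between two $\Lambda^{\la}_k$-vertices is forced through a discarded vertex; equivalently $e_{n,k}(R_{n,\la})_{\ge i}e_{n,k}=\bigl(e_{n,k}(R_{n,\la})_{\ge 1}e_{n,k}\bigr)^i$, so the inherited grading on $R_{n,k,\la}$ is again generated in degree one.

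The heart of the argument is degree one. For a Koszul algebra $A$ the degree-one component is canonically the $A_0$-bimodule dual of $\Ext^1$ between simples; under the equivalence $\mc O_{n,\la}\cong\text{Mof-}R_{n,\la}$ this identifies $e_{n,\mu}(R_{n,\la})_1e_{n,\gamma}$ with $\Ext^1_{\mc O_n}(L_n(\gamma),L_n(\mu))^*$, and similarly for $R_{k,\la}$. I would then show that, under these identifications, the restriction of $\psi_k^n$ to degree one is the dual of the truncation map $\mf{tr}^n_k\colon\Ext^1_{\mc O_n}(L_n(\gamma),L_n(\mu))\to\Ext^1_{\mc O_k}(L_k(\gamma),L_k(\mu))$; this compatibility is exactly the content of \propref{TrSchur} (Schur functor equals truncation) together with \propref{Don} and \corref{Thm::ProjToProj}, which match the homological data defining the two gradings. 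By \lemref{Ext1mono}, \lemref{dimExt-L-L} and \lemref{ExtLonto} this map is an isomorphism for $\mu,\gamma\in\Lambda^{\la}_k$, whence $\psi_k^n((R_{n,k,\la})_1)=(R_{k,\la})_1$. Finally, generation in degree one gives $\psi_k^n((R_{n,k,\la})_i)=\psi_k^n\bigl(((R_{n,k,\la})_1)^i\bigr)=((R_{k,\la})_1)^i=(R_{k,\la})_i$ for all $i$, so $\psi_k^n$ is a graded isomorphism.

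The main obstacle is precisely the degree-one compatibility above: a priori the Koszul gradings on $R_{n,\la}$ and on $R_{k,\la}$ are fixed independently, so the genuine content is that $\mf{tr}^n_k$ respects these choices rather than deforming the degree-one complement into higher degrees. The way I would control this is to note that $\mf{tr}^n_k$ is exact and preserves the radical (Loewy) filtration and, by \corref{Thm::ProjToProj}, sends $P_n(\mu)$ to $P_k(\mu)$, so that it lifts to a graded functor on the graded module categories; the $\Ext^1$-isomorphism then pins down the degree-one identification. Checking that this graded lift exists — equivalently, that the inherited grading on $R_{n,k,\la}$ agrees with the Koszul grading transported from $R_{k,\la}$ and not merely with a filtered deformation of it — is exactly where the saturation hypothesis on $\Lambda^{\la}_k$ must enter in an essential way.
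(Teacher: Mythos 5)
Your reduction to degrees $0$ and $1$ has the right shape, and you correctly isolate the two points where all the content lies; but at both of those points the proposal asserts rather than proves, and the assertions are not innocuous. The first is your claim that the grading on $R_{n,k,\la}$ inherited from $R_{n,\la}$ is generated in degree one, ``equivalently $e_{n,k}(R_{n,\la})_{\ge i}e_{n,k}=\bigl(e_{n,k}(R_{n,\la})_{\ge 1}e_{n,k}\bigr)^i$'', which you deduce from the vanishing $(P_n(\mu):\Delta_n(\gamma))=0$ for $\gamma\in\Lambda^{\la}_n\setminus\Lambda^{\la}_k$ via the heuristic that no path of the quiver is forced through a discarded vertex. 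For a general homogeneous idempotent $e$ in a Koszul algebra one only has $(eA_{\ge1}e)^i\subseteq eA_{\ge i}e$, and the discrepancy is exactly paths detouring through discarded vertices; the Verma-flag vanishing does not rule such detours out, since $[P_n(\mu):L_n(\gamma)]=\sum_\nu(P_n(\mu):\Delta_n(\nu))[\Delta_n(\nu):L_n(\gamma)]$ can be nonzero for $\gamma\notin\Lambda^{\la}_k$, so there genuinely are arrows between retained and discarded vertices. Showing that such detours contribute nothing beyond $(eA_1e)^i$ is essentially the whole proposition, and no argument is given. The second gap is the degree-one compatibility itself: since $\psi^n_k$ is a priori only an algebra isomorphism, $\psi^n_k\bigl(e_{n,k}(R_{n,\la})_1e_{n,k}\bigr)$ is a complement of the correct dimension to $(R_{k,\la})_{\ge2}$ inside the radical, but could be the graph of a map $(R_{k,\la})_1\to(R_{k,\la})_{\ge2}$ rather than $(R_{k,\la})_1$ itself --- precisely the ``filtered deformation'' you mention at the end. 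You flag this as the crux but do not close it.

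For comparison, the paper avoids both issues by a different mechanism: it shows that $R_{n,k,\la}$ \emph{with the grading induced from $R_{n,\la}$} is already Koszul, and then invokes the uniqueness of Koszul gradings \cite[Corollary 2.5.2]{BGS96} to conclude that this induced grading agrees with the one transported from $R_{k,\la}$ through $\psi^n_k$. The Koszulity of the induced grading is obtained homologically: by \propref{TrSchur} and \lemref{ExtLonto} the Schur functor induces surjections $\Ext^i_{\text{-}R_{n,\la}}(\mc{L}(\mu),\mc{L}(\gamma))\to\Ext^i_{\text{-}R_{n,k,\la}}(\mc{L}(\mu),\mc{L}(\gamma))$ for \emph{all} $i$ (not just $i=1$), and since the source is concentrated in internal degree $i$ by Koszulity of $R_{n,\la}$, the decomposition $\Ext^i=\bigoplus_s\ext^i(\,\cdot\,,\,\cdot\,\langle s\rangle)$ forces $\ext^i_{\text{-}R_{n,k,\la}}(\mc{L}(\mu),\mc{L}(\gamma)\langle j\rangle)=0$ for $j\ne i$, which is the Koszulity criterion of \cite[Proposition 2.1.3]{BGS96}. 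If you want to salvage your degree-one strategy, the all-degrees surjectivity of \lemref{ExtLonto} is also the ingredient that would let a dimension count show $\mathrm{rad}^2(R_{n,k,\la})=e_{n,k}(R_{n,\la})_{\ge2}e_{n,k}$ and hence generation in degree one; but as written the proposal does not supply that step.
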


\begin{proof} Note that $R_{n,k,\la}$ and $R_{n,k,\la}^{\rm op}$ are  Koszul algebras by \propref{R-Kosz} and \propref{EndP=} and \cite[Proposition 2.2.1]{BGS96}. By the uniqueness of Koszul grading \cite[Corollary 2.5.2]{BGS96}, it is enough to show the Koszul grading on $R_{n,k,\la}$ equals to the grading induced from $R_{n,\la}$. Let $R=R_{n,\la}$ and $A=R_{n,k,\la}$. For $\mu\in \mc{P}_k$, let $\mc{L}(\mu):=\Hom_{\G_n}(P_{n,\la}, L_n(\mu))$ be the $1$-dimensional right $R$-module concentrated in degree $0$ and let $\mc{L}(\mu)$ also denote the $1$-dimensional graded right $A$-module $S^n_k(\mc{L}(\mu))$ with induced grading from $\mc{L}(\mu)$.

For $i\ge 0$ and $\mu,\gamma\in \mc{P}_k$, there are natural maps $$j\,:\,\Ext^i_{\text{-}R} (\mc{L}(\mu),\mc{L}(\gamma))\longrightarrow \Ext^i_{\text{-}A} (\mc{L}(\mu),\mc{L}(\gamma)\langle i\rangle)$$
and
$$\overline{j}\,:\,\ext^i_{\text{-}R} (\mc{L}(\mu),\mc{L}(\gamma)\langle i\rangle)\longrightarrow \ext^i_{\text{-}A} (\mc{L}(\mu),\mc{L}(\gamma)\langle i\rangle) $$
induced from the Schur functor $S^n_r$. By \lemref{ExtLonto} and \propref{TrSchur}, $j$ is an epimorphism.  We claim that $\overline{j}$ is also an epimorphism. By the proof of \cite[Proposition 3.9.2]{BGS96}, we have
\begin{equation}\label{E-e}
\Ext^i_{\text{-}R}(M,N) =\bigoplus_{s\in \Z}\ext^i_{\text{-}R} (M,N\langle s\rangle)\quad \text{for $i\ge 0$ and $M, N\in \text{mof-}R$.}
\end{equation}
Since $R^{\rm op}$ is a Koszul algebra, we have $\Ext^i_{\text{-}R}(\mc{L}(\mu),\mc{L}(\gamma)) =\ext^i_{\text{-}R} (\mc{L}(\mu),\mc{L}(\gamma)\langle i\rangle)$ for $i\ge 0$. Since $j$ is an epimorphism, we have  $${\rm dim}\, \ext^i_{\text{-}A} (\mc{L}(\mu),\mc{L}(\gamma)\langle i\rangle)\ge {\rm dim}\,\Ext^i_{\text{-}A}(\mc{L}(\mu),\mc{L}(\gamma))\quad \text{for all $i\ge 0$.}$$ Applying \eqref{E-e} to our situation, we have
$\ext^i_{\text{-}A} (\mc{L}(\mu),\mc{L}(\gamma)\langle j\rangle)=0$ for $j\not=i$. By \cite[Propositions 2.1.3, and 2.2.1]{BGS96}, $A$ with the grading induced from $R$ is a Koszul algebra. The proof is completed.
\end{proof}

The Koszulity of $R_{\infty,\la}$ will follow from the following lemma.

\begin{lem} \label{UKoszul}
Let $A$ be a locally unital algebra and let $I$ be the subset of $A$ consisting of the mutually orthogonal idempotents such that $A=\bigoplus_{e_\mu,e_\beta\in I}e_\mu A e_\beta$ and $A_0=\bigoplus_{e_\mu\in I}\C e_\mu$. Assume that there is an
increasing sequence $\{I_i\}_{i\ge 1}$ of subsets of $I$ such that $\bigcup_{i\ge 1} I_i=I$. If $B^i:=\bigoplus_{e_\mu,e_\beta\in I_i}e_\mu A e_\beta$ are locally unital Koszul algebras with ${\rm span} \{e_\mu\,|\, e_{\mu}\in I_i\}= (B^i)_0$ for all $i\ge 1$ such that their gradings are compatible, then $A$ is a locally unital Koszul algebra such that $B^i$ are graded subalgebras of $A$.
\end{lem}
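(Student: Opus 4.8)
The plan is to establish the three assertions in turn: that the compatible gradings make $A$ a positively graded locally unital algebra with $A_0$ semisimple, that each $B^i$ is a graded subalgebra, and --- the main point --- that $A$ is Koszul. Throughout I write $B^i=e^{(i)}Ae^{(i)}$ for the corner of $A$ cut out by the idempotents of $I_i$, with $e^{(i)}$ the corresponding local unit, and I note $A=\bigcup_i B^i=\varinjlim_i B^i$, since any $e_\mu Ae_\beta$ lies in $B^i$ once $e_\mu,e_\beta\in I_i$. Compatibility of the gradings means precisely that the grading on $B^i$ agrees with the one it inherits as the corner $e^{(i)}B^{i+1}e^{(i)}$; these therefore assemble into a single positive grading on $A$ for which each $B^i$ is a graded (non-unital) subalgebra, and $A_0=\bigoplus_{e_\mu\in I}\C e_\mu$ is semisimple by hypothesis. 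This settles the first and last assertions and lets me treat all the $B^i$ and $A$ inside one graded world.

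First I would record the \emph{localization principle} that drives everything: for $e_\mu,e_\nu\in I$ and $p\ge 0$, the finite-dimensional space $e_\mu A_pe_\nu$ already lies in $B^s$ for every $s$ with $e_\mu,e_\nu\in I_s$, and in fact $e_\mu A_pe_\nu=e_\mu (B^s)_pe_\nu$. Using this I claim $A$ is \emph{quadratic}. Generation in degree one is immediate: for $e_\mu,e_\nu\in I_s$, Koszulity of $B^s$ gives $(B^s)_p=((B^s)_1)^p$, so $e_\mu A_pe_\nu=e_\mu(B^s)_pe_\nu=e_\mu((B^s)_1)^pe_\nu\subseteq e_\mu(A_1)^pe_\nu$. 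For the relations, any homogeneous relation $r$ of $A$ of degree $\ge 3$ with fixed source and target involves only finitely many intermediate idempotents, all lying in some $I_{s'}$; then $r$ is a relation of $B^{s'}$, so by quadraticity of $B^{s'}$ it lies in the two-sided ideal generated by the degree-two relations of $B^{s'}$, and these are degree-two relations of $A$. As the tensor algebra of $B^{s'}$ is a subalgebra of that of $A$, the same expression is valid over $A$, whence $r$ lies in the ideal generated by the quadratic relations of $A$.

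With $A$ quadratic I would invoke the standard criterion that $A$ is Koszul if and only if its Koszul complex $K(A)$, resolving $A_0$, is acyclic in positive homological degrees (equivalently $\ext^i_A(L_\mu,L_\nu\langle j\rangle)=0$ for $i\ne j$, where $L_\mu:=A_0e_\mu$). The key is that acyclicity is a \emph{local} condition: to test it I apply the exact functors $e_\nu(-)$ and isolate a fixed total degree, obtaining in each homological degree a finite-dimensional complex of vector spaces. By the localization principle, together with the analogous identification $e_\mu A^!_pe_\nu=e_\mu (B^{s'})^!_pe_\nu$ of the quadratic dual for $s'$ large, this finite piece of $K(A)$ coincides with the corresponding piece of $K(B^{s'})$ for all sufficiently large $s'$ --- large enough to contain $\mu,\nu$ and the finitely many intermediate vertices contributing in that degree. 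Since $B^{s'}$ is Koszul, $K(B^{s'})$ is acyclic, so each such finite piece is exact; as $\mu,\nu$ and the degree were arbitrary, $K(A)$ is acyclic and $A$ is Koszul.

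The step I expect to be the main obstacle is the clean identification of $K(A)$ with the filtered colimit of the $K(B^{s'})$ --- concretely, showing that forming the quadratic dual and taking the perp of the relation spaces is compatible with passing from $B^{s'}$ to $A$ on each finite-dimensional summand $e_\mu(-)e_\nu$. Because dualization reverses the direction of a colimit, this compatibility is not formal; it rests on the local finiteness of $A$ (each $e_\mu A_de_\nu$ is finite-dimensional) and on the fact, supplied by the localization principle, that every bounded-degree computation in $K(A)$ takes place inside a single corner $B^{s'}$. Once this bookkeeping is in place, exactness of filtered colimits does the rest.
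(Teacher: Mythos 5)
Your argument takes a genuinely different route from the paper's. The paper never mentions quadraticity or the Koszul complex: it fixes, for each $n$, the explicit minimal linear resolution of $B^n_0$ built inductively by $P^{n,0}=B^n$ and $P^{n,i}=B^n\otimes_{B^n_0}{\rm Ker}(f^{n,i-1})_i$, shows that the resolution for $B^{n-1}$ embeds as a graded subcomplex of the one for $B^n$ (the only real work being the injectivity of the comparison maps $\tau_j$, which uses the decomposition of $B^n$ into the two $(B^{n-1},B^n_0)$-bimodules $\bigoplus_{\mu\in I_{n-1}}B^ne_\mu$ and $\bigoplus_{\mu\in I_n\setminus I_{n-1}}B^ne_\mu$), and then takes the union of these complexes to obtain a linear projective resolution of $A_0$ over $A$. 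You instead verify quadraticity of $A$ and acyclicity of its Koszul complex, localizing each condition to a corner $e_\mu(-)e_\nu$ and hence to a single $B^s$. Your localization principle $e_\mu A_pe_\nu=e_\mu(B^s)_pe_\nu$ and the quadraticity argument built on it are correct. The paper's construction is more elementary, using only the definition of Koszulity and exactness of filtered colimits; yours, if completed, is shorter but imports the quadratic-dual machinery.

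However, the step you yourself flag as the main obstacle is a genuine gap as written, for two reasons. First, you formulate the Koszul complex as $A\otimes_{A_0}(A^!)^*$, so identifying its graded pieces with those of $K(B^{s'})$ requires forming quadratic duals and perps of relation spaces, i.e.\ dualizing; you correctly observe that this does not commute with the filtered colimit for free and that it ``rests on the local finiteness of $A$'' --- but finite-dimensionality of the spaces $e_\mu A_de_\nu$ is not a hypothesis of the lemma (it holds in the paper's application to $R_{\infty,\la}$, but the lemma is stated abstractly). Second, even granting local finiteness, you do not actually carry out the identification. Both problems disappear if you use the primal form of the Koszul complex: set $R\subseteq A_1\otimes_{A_0}A_1$ to be the space of quadratic relations and $K_i:=\bigcap_{j}A_1^{\otimes j}\otimes_{A_0} R\otimes_{A_0} A_1^{\otimes(i-j-2)}$ inside the $i$-fold tensor power of $A_1$ over $A_0$. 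Every element of $e_\mu(A_1\otimes_{A_0}\cdots\otimes_{A_0}A_1)e_\nu$ involves only finitely many intermediate idempotents, and a relation of $A$ supported on vertices of $I_s$ is a relation of $B^s$, so $e_\mu K_i(A)e_\nu=\bigcup_s e_\mu K_i(B^s)e_\nu$ and acyclicity passes through the filtered colimit with no dualization at all. With that substitution your argument closes; as written, it does not.
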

\begin{proof}
It is clear that $\bigcup_{i\ge 1} B^i=A$ and hence $A$ is a positively graded algebra by defining $A_j:=\bigcup_{i\ge 1} B^i_j$ for all $j\ge 0$ and $A_0={\rm span} \{e_\mu\,|\, \mu\in I\}$.
For every $n\in \N$, let the following exact sequence of  graded $B^n$-modules
\begin{align*}
 C\langle n\rangle:\qquad \cdots\longrightarrow P^{n, j}\stackrel{f^{n,j}}\longrightarrow P^{n, j-1}\cdots \longrightarrow P^{n, 2}\stackrel{f^{n,2}}\longrightarrow P^{n, 1} \stackrel{f^{n,1}}\longrightarrow P^{n, 0}\stackrel{f^{n,0}}\longrightarrow B^{n}_0 \longrightarrow 0
\end{align*}
be a graded projective resolution of $B^{n}_0$ in $B^{n}$-mod
  such that $P^{n, i}$ are generated by its $i$-th component for all $i\in \Z_+$. We may assume that $ P^{n, 0}=B^n$, $P^{n, i}$ and $f^{n, i}$ are defined inductively on $i$ by setting $P^{n, i}:=B^n\otimes_{B^n_0}{\rm Ker}(f^{n,i-1})_{i}$ and $f^{n, i}(b\otimes v):= bv$ for all $b\in B^n$ and $v\in {\rm Ker}(f^{n,i-1})_{i}$.

  First we observe that $ C\langle n-1\rangle$ can be regarded as a graded subcomplex of $ C\langle n\rangle$ of graded $B^{n-1}$-modules by induction. We consider the following diagram:
    \begin{eqnarray*}
\CD
\cdots P^{n-1, j}  @>f^{n-1,j}>> P^{n-1, j-1} @>>> \cdots P^{n-1,1} @>f^{n-1,1}>> P^{n-1,0} @>f^{n-1,0}>> B^{n-1}_0 @>>>0\\
 @VV \tau_j V @VV\tau_{j-1}V  @VV\tau_1 V @VV\tau_0 V @VV\tau V @.\\
\cdots P^{n, j}  @>f^{n,j}>> P^{n, j-1} @>>> \cdots P^{n,1} @>f^{n,1}>> P^{n,0} @>f^{n,0}>> B^{n}_0 @>>>0,
 \endCD
\end{eqnarray*}
where $\tau$ and $\tau_0\,:\, B^{n-1}\longrightarrow B^n$ are the natural inclusion maps, and the $B^{n-1}$-homomorphism $\tau_{k} : P^{n-1, k}\longrightarrow P^{n,k}$ are defined inductively on $k$ by $$\tau_{k}(b\otimes v):=\tau_0(b)\otimes\tau_{k-1}(v)=b\otimes\tau_{k-1}(v)$$ for all $b\in B^{n-1}, v\in {\rm Ker}(f^{n-1,k-1})_{k}$, for each $k\ge 1$.  To show that the  diagram commutes, we need to show the following diagram
 \begin{eqnarray*}
\CD
P^{n-1, j+1}  @>f^{n-1,j+1}>> P^{n-1, j} \\
 @VV \tau_{j+1} V @VV\tau_{j}V  \\
P^{n, j+1}  @>f^{n,j+1}>> P^{n, j}
 \endCD
\end{eqnarray*}
commutes for every $j\ge 0$. Note that
$$
f^{n,j+1}(\tau_{j+1}(b\otimes v))=b\tau_{j}(v)=\tau_{j}(bv)=\tau_{j}(f^{n-1,j+1}(b\otimes v)),
$$
for all $b\in B^{n-1}, v\in {\rm Ker}(f^{n-1,j+1})_{j+2}$, for every $j\ge 0$. Therefore the diagram commutes. Finally, we need to show $\tau_{j}$ are injective. Assume that  $\tau_{j}$ are injective. We have a $B_0^{n-1}$-monomorphism $$\tau_{j}\,:\,{\rm Ker}(f^{n-1,j})_{j+1}\longrightarrow {\rm Ker}(f^{n,j})_{j+1}.$$ Since $B^n$ decomposes into direct sum of two $(B^{n-1}, B^n_0)$-modules $\bigoplus_{\mu\in I_{n-1}}B^{n}e_\mu$ and $\bigoplus_{\mu\in I_n\backslash I_{n-1}}B^{n} e_\mu$,  $\tau_{j+1}$ is a monomorphism. Therefore the vertical arrows are monomorphisms and hence $ C\langle n-1\rangle$ can be regarded as a graded subcomplex of $ C\langle n\rangle$ for all $n\ge 1$.

 Consider the following complex of  graded $A$-module
\begin{align*}
 C\langle \infty\rangle:\qquad \cdots\longrightarrow P^{ j}\stackrel{f^{j}}\longrightarrow P^{ j-1}\cdots \longrightarrow P^{ 2}\stackrel{f^{2}}\longrightarrow P^{ 1} \stackrel{f^{1}}\longrightarrow P^{0}\stackrel{f^{0}}\longrightarrow A_0 \longrightarrow 0
\end{align*}
where $ P^{ 0}:=A$, $f^0$ is the natural projection, $P^{ i}$ and $f^{i}$ are defined inductively by $P^{ i}:=A\otimes_{A_0}{\rm Ker}(f^{i-1})_{i}$ and $f^{i}(a\otimes v)=av$, for all $a\in A$, $v\in{\rm Ker}(f^{i-1})_{i}$ and for all $i\ge 1$. From the observation above, we see that $P^{ i}$ can be identified with $\bigcup_{n\ge 1}P^{n,i}$ and the restriction of $f^i$ to $P^{n,i}$ equals $f^{n,i}$ for all $n\ge 1$. Since  ${\rm Ker}(f^{n,i})$ is generated by elements of  degree $i$ over $B^n$ for all $i\ge 0$ and $n\ge 1$,  ${\rm Ker}(f^{i})$ are generated by degree $i$ elements over $A$ for all $i\ge 0$. Therefore the complex $ C\langle \infty\rangle$ is exact and hence $A$ is a Koszul algebra.
\end{proof}

We are now in a position to prove the main theorem.
\begin{thm} \label{Thm::RIsKoszul}
For a dominant integral weight $\la \in X$,  $R_{\infty,\la}$ is a Koszul algebra. Moreover, there is a graded isomorphism from $e_{\infty, k}R_{\infty,\la}e_{\infty, k}$ to $R_{k,\la}$ induced from $\mf{tr}^\infty_k$ for each $k\ge 1$.
\end{thm}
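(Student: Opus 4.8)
The plan is to realize $R_{\infty,\la}$ as a direct limit of the finite-dimensional Koszul algebras $R_{k,\la}$ and then apply \lemref{UKoszul}. First I would record that $R_{\infty,\la}$ is a locally unital algebra with respect to the system of mutually orthogonal idempotents $I:=\{e_{\infty,\mu}\,|\,\mu\in\Lambda_\infty^\la\}$, with $(R_{\infty,\la})_0=\bigoplus_{\mu\in\Lambda_\infty^\la}\C e_{\infty,\mu}$. Setting $I_k:=\{e_{\infty,\mu}\,|\,\mu\in\Lambda^\la_k\}$ for $k\ge 1$ gives an increasing sequence of finite subsets whose union is $I$, since every $\mu\in\Lambda_\infty^\la$ lies in $\Lambda^\la_k$ for all sufficiently large $k$. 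The associated subalgebra $B^k:=\bigoplus_{e_\mu,e_\beta\in I_k}e_\mu R_{\infty,\la}e_\beta$ is precisely $e_{\infty,k}R_{\infty,\la}e_{\infty,k}=R_{\infty,k,\la}$, and $\mathrm{span}\{e_{\infty,\mu}\,|\,e_{\infty,\mu}\in I_k\}=(B^k)_0$. By \propref{EndP=} (taking $n=\infty$), $\psi_k^\infty:R_{\infty,k,\la}\to R_{k,\la}$ is an algebra isomorphism induced from $\mf{tr}^\infty_k$, and $R_{k,\la}$ is Koszul by \propref{R-Kosz}; I therefore transport the Koszul grading on $R_{k,\la}$ to $B^k$ through $\psi_k^\infty$, so that each $B^k$ becomes a locally unital Koszul algebra.

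The key step is to verify that these gradings are \emph{compatible}, that is, for $k<k'$ the grading on $B^k$ agrees with the one it inherits as a subspace of $B^{k'}$ (note $R_{\infty,k,\la}\subseteq R_{\infty,k',\la}$ because $\Lambda^\la_k\subseteq\Lambda^\la_{k'}$ forces $e_{\infty,k}e_{\infty,k'}=e_{\infty,k}$). Here I would use the functoriality of truncation, $\mf{tr}^\infty_k=\mf{tr}^{k'}_k\circ\mf{tr}^\infty_{k'}$, which yields $\psi_k^\infty=\psi_k^{k'}\circ\psi_{k'}^\infty$ on $R_{\infty,k,\la}$. Since $\psi_{k'}^\infty$ respects the idempotent decomposition, sending $e_{\infty,\mu}g\,e_{\infty,\gamma}$ to $e_{k',\mu}\psi_{k'}^\infty(g)e_{k',\gamma}$, it carries $R_{\infty,k,\la}$ into $R_{k',k,\la}=e_{k',k}R_{k',\la}e_{k',k}$. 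By \propref{gr-R=eRe}, the map $\psi_k^{k'}:R_{k',k,\la}\to R_{k,\la}$ is a graded isomorphism when $R_{k',k,\la}$ carries the grading induced from the Koszul grading on $R_{k',\la}$.

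Chasing these identifications gives the compatibility: an element $f\in R_{\infty,k,\la}$ is homogeneous of degree $d$ in the grading restricted from $B^{k'}$ exactly when $\psi_{k'}^\infty(f)\in R_{k',k,\la}$ is homogeneous of degree $d$, which by \propref{gr-R=eRe} holds exactly when $\psi_k^{k'}(\psi_{k'}^\infty(f))=\psi_k^\infty(f)$ is homogeneous of degree $d$ in $R_{k,\la}$; and this is precisely the definition of the grading on $B^k$ transported via $\psi_k^\infty$. Thus the restriction of the $B^{k'}$-grading to $B^k$ coincides with the $B^k$-grading, so the hypotheses of \lemref{UKoszul} are all met. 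Applying that lemma, $R_{\infty,\la}$ is a locally unital Koszul algebra and each $B^k=e_{\infty,k}R_{\infty,\la}e_{\infty,k}$ is a graded subalgebra; in particular $\psi_k^\infty$ is the asserted graded isomorphism $e_{\infty,k}R_{\infty,\la}e_{\infty,k}\cong R_{k,\la}$ for every $k\ge 1$.

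I expect the grading-compatibility step to be the main obstacle: one must carefully track how the three isomorphisms $\psi_k^\infty$, $\psi_{k'}^\infty$ and $\psi_k^{k'}$ interact through the truncation functors, and in particular reconcile the \emph{a priori} ungraded isomorphism $\psi_k^\infty$ at the infinite level with the finite-level graded isomorphism supplied by \propref{gr-R=eRe}, which is only available for finite ranks $k<k'<\infty$.
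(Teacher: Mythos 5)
Your proposal is correct and follows essentially the same route as the paper: transport the Koszul grading from $R_{k,\la}$ to $e_{\infty,k}R_{\infty,\la}e_{\infty,k}$ via $\psi_k^\infty$ from \propref{EndP=}, deduce compatibility of the gradings for $k<k'$ from the graded isomorphism of \propref{gr-R=eRe} together with the factorization $\psi_k^\infty=\psi_k^{k'}\circ\psi_{k'}^\infty$, and conclude by \lemref{UKoszul}. Your write-up merely spells out the compatibility check that the paper leaves implicit.
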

\begin{proof} Let $e_k:=e_{\infty, k}$ for all $k\ge 1$ and $R=R_{\infty,\la}$. For $k\ge 1$, let the grading of $e_k R e_k$ be the grading obtained from the Koszul grading of $R_{k,\la}$ through the isomorphism defined in \propref{EndP=}. By \propref{gr-R=eRe}, the grading of $e_k R e_k$ and the grading of $e_n R e_n$ are compatible for all $k\le n$. Since $R=\bigcup_{k\ge 1}e_k R e_k$, we can define the grading of each element $v$ in $R$ to be the grading of $v$ in $e_k R e_k$ for $k\gg 0$. Now the theorem is a direct consequence of \lemref{UKoszul}.
\end{proof}

\subsection{Category $\overline{\mc{O}}$ and $\widetilde{\mc{O}}$}\label{subsection::SO}
The abelian categories $\overline{\mc{O}}(d)$ and $\widetilde{\mc{O}}(d)$  consisting of certain modules over Lie superalgebras of infinite rank introduced in \cite{CL10} and \cite{CLW11} are equivalent to the category $\mc{O}(d)$ consisting of certain modules over Lie algebra of infinite rank. Our category $\mc{O}(d)$ is indeed a subcategory of category $\mc{O}(d)$ defined in there (cf. \cite[Section 6.2.3]{CW08}) consisting of integral weights modules. We can also define similarly the subcategories $\overline{\mc{O}}_{int}(d)$ and $\widetilde{\mc{O}}_{int}(d)$ of $\overline{\mc{O}}(d)$ and $\widetilde{\mc{O}}(d)$ defined in \cite[Section 6.2.3]{CW08} consisting of integral weights modules, respectively. The arguments in there also show that our categories $\mc{O}(d)$, $\overline{\mc{O}}_{int}(d)$ and $\widetilde{\mc{O}}_{int}(d)$ are equivalent.

\begin{thm} \label{Thm::InftySuperBlockKoszul} For $d\in\hf\Z$, each irreducible module in $\mc{O}(d)$ (resp. $\overline{\mc{O}}_{int}(d)$ and $\widetilde{\mc{O}}_{int}(d)$) has projective cover and injective envelope which have finite filtrations such that their successive quotients are isomorphic to Verma modules and costandard modules, respectively. Moreover, $\mc{O}(d)$
$\overline{\mc{O}}_{int}(d)$ and $\widetilde{\mc{O}}_{int}(d)$ are Koszul categories. \end{thm}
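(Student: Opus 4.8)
The plan is to treat the Lie-algebra category $\mc{O}(d)$ first, where every ingredient has already been assembled in the preceding sections, and then to transport all conclusions to $\overline{\mc{O}}_{int}(d)$ and $\widetilde{\mc{O}}_{int}(d)$ through the equivalences recalled in \secref{subsection::SO}.

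For $\mc{O}(d)$ itself the statements about covers and envelopes are immediate: \thmref{Oproj} supplies, for each $\mu\in\mc{P}$ of central charge $d$, a projective cover $P(\mu)$ carrying a finite Verma flag, and \corref{inj} supplies the injective envelope $I(\mu)$ with a finite costandard filtration. It therefore remains only to establish Koszulity. First I would decompose $\mc{O}(d)$ into its blocks $\mc{O}_{\infty,\la}$, one for each dominant integral $\la\in X$ of charge $d$, so that the minimal projective generator of the whole category is $\bigoplus_\la P_{\infty,\la}$ and the defining locally unital algebra of \secref{subsection::KRandGM} is (the opposite of) $\bigoplus_\la R_{\infty,\la}$. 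Each summand $R_{\infty,\la}$ is Koszul by \thmref{Thm::RIsKoszul}, and an orthogonal direct sum of Koszul algebras is again Koszul, its Koszul resolution of $A_0$ being the direct sum of those of the summands. Hence $\mc{O}(d)$ is a Koszul category.

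To pass to the superalgebra side I would invoke the exact equivalences $\mc{O}(d)\simeq\overline{\mc{O}}_{int}(d)\simeq\widetilde{\mc{O}}_{int}(d)$ recorded in \secref{subsection::SO}. Since an equivalence of abelian categories preserves projectivity, injectivity, and the formation of covers and envelopes, it carries $P(\mu)$ and $I(\mu)$ to the projective cover and injective envelope of the image of $L(\mu)$. Because super duality matches parabolic Verma modules with parabolic Verma modules and costandard modules with costandard modules, the transported objects inherit finite Verma and costandard filtrations. Finally, the equivalence sends the sum of projective covers to the sum of projective covers and induces an isomorphism of their endomorphism algebras, so the defining algebra of each super category agrees with $\bigoplus_\la R_{\infty,\la}$ up to isomorphism, and Koszulity is inherited.

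The only genuine point requiring care, and the step I expect to be the main obstacle, is verifying that the equivalences of \secref{subsection::SO} respect the highest weight structure, i.e.\ that they intertwine the standard and costandard objects and the linkage/block decomposition, so that ``successive quotients isomorphic to Verma (resp.\ costandard) modules'' is a property genuinely transported across the equivalence rather than merely holding abstractly. Once this compatibility is confirmed, the theorem is a formal consequence of \thmref{Oproj}, \corref{inj}, \thmref{Thm::RIsKoszul}, and the observation that Koszulity of a category, being defined through the endomorphism algebra of the sum of projective covers, is an invariant of the underlying abelian category.
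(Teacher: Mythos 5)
Your proposal is correct and follows essentially the same route as the paper: decompose $\mc{O}(d)$ into blocks $\mc{O}_{\infty,\la}$, invoke \thmref{Oproj}, \corref{inj} and \thmref{Thm::RIsKoszul} for the Lie-algebra side, and transport everything to $\overline{\mc{O}}_{int}(d)$ and $\widetilde{\mc{O}}_{int}(d)$ via the equivalences of \secref{subsection::SO}. The compatibility of those equivalences with standard and costandard objects, which you rightly flag as the point needing care, is supplied by the super duality results of \cite{CL10} and \cite{CLW11} that the paper cites rather than reproves.
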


\begin{proof}
By (ii) of the definition of $\mc{O}$,  $\mc{O}(d)$ is equivalent to the category
$\bigoplus_{\la\in J(d)}\mc{O}_{\la}$, where $\mc{O}_{\la}:=\mc{O}_{\infty,\la}$ and $J(d)$ is a set  consisting of all dominant integral weights $\la\in X$ such that $\la(K)=d$. Therefore, the theorem holds for the category  $\mc{O}$ by \thmref{Oproj}, \corref{inj} and \thmref{Thm::RIsKoszul}. Since $\mc{O}(d)$, $\overline{\mc{O}}_{int}(d)$ and $\widetilde{\mc{O}}_{int}(d)$ are equivalent, the theorem also holds for the categories  $\overline{\mc{O}}_{int}(d)$ and $\widetilde{\mc{O}}_{int}(d)$.
\end{proof}

\section{The dual category} \label{Set::final}
In this section, we shall study the category $\mc {O}'_{n,\la}$, $n\in\N$. Its endomorphism algebra of the direct sum of projective covers of irreducible modules is isomorphic to the Koszul dual of the Koszul algebra $R_{n,\la}$, defined in the last section, for $n\gg 0$. We show the limit category of $\mc {O}'_{n,\la}$ for $n\rightarrow\infty$ is a Koszul category.
Since results and their proofs in the section are similar to the proof in earlier sections, we only collect the analogous  results without proof. We give an idea on how to prove the results  about the truncation functor in the section for $n\in\N$ in the proof of \propref{lem:trunc'}.

\begin{lem} \label{O=O}
Let $\la$ be a dominant integral weight in $X_\infty$. There is a positive integer $k_\la$ such that for every integer $n\ge k_\la$ we have
\begin{equation*}
       \mc{O}_{n,\la}=\mc{O}_{n,\la}^-, \qquad \text{for $n\ge k_\la$}.
\end{equation*}
\end{lem}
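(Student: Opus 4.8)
The plan is to reduce the categorical identity to a combinatorial statement about the $W_n$-orbit of $\la$, and then to exploit that the tail part of $\rho_n$ grows linearly in $n$ while $\la$ has finite support. Since $\mc{O}_{n,\la}=\mc{O}_{n,\la}^-\cap\mc{O}_n$ and each of these categories is determined by the highest weights allowed as irreducible subquotients, the equality $\mc{O}_{n,\la}=\mc{O}_{n,\la}^-$ is equivalent to $\Lambda^{\la,-}_n=\Lambda^{\la}_n$; as $\Lambda^{\la}_n=\Lambda^{\la,-}_n\cap\mc{P}_n$ by \eqref{weights}, it suffices to produce $k_\la$ with $\Lambda^{\la,-}_n\subseteq\mc{P}_n$ for all $n\ge k_\la$. (Here one uses \lemref{2.1}: if every $\G_n$-composition factor $L_n(\mu)$ of a module $M\in\mc{O}^-_n$ has $\mu\in\mc{P}_n$, then $M$ restricted to $\mf{l}_n$ is a direct sum of $L(\mf{l}_n,\nu)$ with $\nu\in\mc{P}_n$, so $M\in\mc{O}_n$; thus equality of the weight sets does force equality of the categories.)

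Next I would translate membership in $\mc{P}_n$ into a single inequality. Fix $N$ with $\la_j=0$ for $j>N$, so that $\la\in X_n$ for all $n\ge N$. Let $\mu\in\Lambda^{\la,-}_n=(W_n\cdot\la)\cap\mcP^-_n$. Since $\{\beta_j\mid 1\le j<n\}\subseteq Y_n$, dominance with respect to $Y_n$ forces $\mu(E_1)\ge\mu(E_2)\ge\cdots\ge\mu(E_n)$, while $\mu(E_j)=0$ for $j>n$ because $\mu\in X_n$. Hence $\mu\in\mc{P}_n$ if and only if $\mu(E_n)\ge 0$, and the lemma reduces to the claim that $\mu(E_n)\ge 0$ for every $\mu\in W_n\cdot\la$ once $n$ is large.

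The key point is that $\mu+\rho_n$ lies in the orbit of $\la+\rho_n$ under the \emph{linear} action of $W_n$, which permutes the coordinate functionals $\epsilon_i$ (and their evaluations on the $E_i$) in type $\mf a$, and permutes them together with sign changes in types $\mf{b,c,d}$. Thus $(\mu+\rho_n)(E_n)=\pm(\la+\rho_n)(E_i)$ for some $i\in\I^+_m(n)$. Now $(\la+\rho_n)(E_i)=\la_i+\rho_n(E_i)$ is independent of $n$ for each fixed $i$, and for $j>N$ one has $(\la+\rho_n)(E_j)=\rho_n(E_j)=-j+C$ for a type-dependent constant $C=C(\xx,m)$; in particular $\rho_n(E_n)=-n+C$ is negative with the largest absolute value once $n$ is large. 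Choosing $k_\la$ so large that $|\rho_n(E_n)|$ exceeds $|(\la+\rho_n)(E_i)|$ for all the finitely many fixed indices $i$ (head indices and $1\le i\le N$), the minimum of the multiset $\{(\la+\rho_n)(E_i)\}$ in type $\mf a$, respectively the maximum of $\{|(\la+\rho_n)(E_i)|\}$ in types $\mf{b,c,d}$, is attained at $i=n$ and equals $\rho_n(E_n)$, respectively $-\rho_n(E_n)$. In either case $(\mu+\rho_n)(E_n)\ge\rho_n(E_n)$, whence $\mu(E_n)=(\mu+\rho_n)(E_n)-\rho_n(E_n)\ge 0$, and the reduction of the previous paragraph finishes the proof.

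The main obstacle I expect is the uniform, type-free bookkeeping in the last step: one must pin down the linear action of $W_n$ on the coordinate functionals and, for types $\mf{b,c,d}$, handle the sign changes, verifying that because the extremal coordinate $\rho_n(E_n)$ is negative with the largest magnitude, a signed permutation can never push $(\mu+\rho_n)(E_n)$ strictly below $\rho_n(E_n)$. Once that is in place, making the threshold $k_\la$ explicit in terms of $m$, the type, and $\max_i|(\la+\rho_n)(E_i)|$ over the fixed indices is routine.
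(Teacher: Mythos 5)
Your overall strategy coincides with the paper's: reduce the categorical equality to $\Lalanne\subseteq\mc{P}_n$ for $n\gg 0$, note that dominance along the tail ($\{\beta_i\mid 1\le i<n\}\subseteq Y_n$) makes this equivalent to $w\cdot\la(E_n)\ge 0$ for all $w\in W_n$, and then compare $(w(\la+\rho_n))(E_n)$ with the extremal coordinate $\rho_n(E_n)\sim -n$, using that $\la$ has finite support while the tail of $\rho_n$ decreases linearly. The reductions in your first two paragraphs are correct and are carried out (mostly implicitly) in the paper as well.

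The step that fails as written is the identity $(\mu+\rho_n)(E_n)=\pm(\la+\rho_n)(E_i)$. The $W_n$-action on $(\h_n)^*$ is defined through the twist by the isomorphism $\iota$ of \eqnref{iota}, and what $W_n$ permutes up to sign are the values on the elements $E_j-\delta_j K$, not on the $E_j$ themselves (this is also why $\rho_n$ carries the $+\delta_j$ shift). Concretely, $w(\nu)(E_i-\delta_i K)=\pm\,\nu(E_j-\delta_j K)$ for some $j\in\I^+_m(n)$, so that
$(\mu+\rho_n)(E_n)=\pm\bigl((\la+\rho_n)(E_j)-\delta_j d\bigr)+d$ with $d=\la(K)$; already the simple reflection $\sigma_{\beta_\times}$ sends $\nu(E_{1})$ to $\nu(E_{-1})+\nu(K)$ rather than to $\nu(E_{-1})$. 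Since $d\in\hf\Z$ is arbitrary (and typically nonzero in the situations this lemma is used for), your multiset comparison is off by summands of size up to $2|d|$, and the claimed inequality $(\mu+\rho_n)(E_n)\ge\rho_n(E_n)$ does not follow from the identity you state. The gap is repairable: the corrections are uniformly bounded by $2|d|$, so enlarging $k_\la$ accordingly restores the conclusion. The cleaner fix, which is what the paper does, is to run the whole comparison with the functionals $E_j-\delta_j K$: choose $k_\la$ so that $(\la+\rho_n)(E_n-\delta_n K)\le\pm(\la+\rho_n)(E_j-\delta_j K)$ for all $j\in\I^+_m(n)$ and $n\ge k_\la$; then the two occurrences of $d$ cancel in the expansion of $w\cdot\la(E_n)$ and one obtains $w\cdot\la(E_n)\ge\la(E_n)\ge 0$ directly.
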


\begin{proof} Recall that $\delta_j$ are defined in \eqnref{delta}. It is clear that there is a positive number $k$ such that $\{(\la+\rho_n)(E_n-\delta_n K)\}_{n\ge k}$ is a strictly decreasing sequence of negative numbers. Therefore we can choose $k_\la>k$ such that the following equalities hold, for any given fixed integer $n\ge k_\la$,
\begin{equation*}
        (\la+\rho_n)(E_n-\delta_n K)\le \pm(\la+\rho_n)(E_j-\delta_j K), \qquad \text{for all $j\in \I^+_m(n)$}.
\end{equation*}
Note that for each $i\in\I^+_m(n) $ and $\mu\in X_n$, we have
$$
w(\mu)(E_i-\delta_i K)=\pm\mu(E_j-\delta_j K), \qquad \text{for some $j\in \I^+_m(n)$.}
$$
Therefore
\begin{eqnarray*}
   w\cdot\la(E_n)&=&w(\la+\rho_n)(E_n-\delta_n K) +w(\la+\rho_n)(\delta_n K)-\rho_n(E_n) \\
  &=&\pm(\la+\rho_n)(E_j-\delta_j K)+\la( K)-\rho_n(E_n) \qquad \text{(for some $j\in \I^+_m(n)$)} \\
   &\ge&(\la+\rho_n)(E_n-\delta_n K)+\la( K)-\rho_n(E_n) \qquad \text{(by assumption)}\\
   &=&\la(E_n)\ge 0.
\end{eqnarray*}
It implies that $\Lalanne\subseteq \Lalan$. The proof is completed.
\end{proof}

We choose a fixed $\phi=\sum_{i=-m}^{\infty}\phi_i\epsilon_i \in \h^*$ such that $\la(\alpha^{\vee})\in\Z$ and
$$Y_\infty=\{\alpha \in \Pi(\G_\infty)\, |\, (\phi+\rho)( \alpha^\vee) =0\},$$
where $Y_\infty$ is defined in \secref{sec:para}. Then we have
$$Y_n=\{\alpha \in \Pi(\G_n)\, |\, (\phi+\rho_n)( \alpha^\vee) =0\}\qquad \text{ for $n\in \N$}.$$  We can further assume that there is a negative half integer $a$ such that $ (\phi+\rho)(E_j)=a$, for $j\ge 1$ and
\begin{equation*}
       a\le \pm(\la+\rho)(E_j), \qquad \text{for all $j\in \I^+_m(n)$}.
\end{equation*}

\vskip 0.3cm
\noindent{\bf In the remainder of this section, we shall fix a $\phi$ as above and a dominant integral weight $\la\in X_\infty$.}

\vskip 0.3cm

For  $n\in \Ninf$, we let
$Y'_n:=\{\alpha \in \Pi_n \,|\, (\la+\rho_n)(\alpha^\vee) =0\}$, $\mf l'_n$  the Levi subalgebra associated to the set $Y'_n$ and $\mf q'_n:=\mf{l}'_n +\mf{b}_n$ the corresponding parabolic subalgebra with
nilradical $\mf{u}'_n$.
For $\mu\in \h^*_n$ satisfying $\mu(\alpha^\vee)\in \Z_+$ for all $\alpha\in Y'_n$, let $L(\mf{l}'_n,\mu)$ denote the highest weight
irreducible $\mf{l}'_n$-module of highest weight $\mu$. We extend
$L(\mf{l}'_n,\mu)$ to a $\mf{q}_n'$-module by letting $\mf{u}'_n$ act
trivially.  Define as usual the parabolic Verma module
$\Delta'_n(\mu)$ and its irreducible quotient $L'_n(\mu)$ over $\G_n$ by
\begin{align*}
\Delta'_n(\mu):=\text{Ind}_{\mf{q}'_n}^{\G_n}L(\mf{l}'_n,\mu) \qquad \text{and} \qquad
\Delta'_n(\mu) \twoheadrightarrow L'_n(\mu).
\end{align*}

Let $k'_\la$ be the minimal nonnegative integer such that $\la(E_{k'_\la})=0$.
Let $n_0$ be a fixed number satisfying
$$n_0\ge k'_\la\quad \text{and}\quad n_0\ge k_\la,$$
where $k_\la$ is defined in \lemref{O=O}.
For $n_0\le n\le \infty$, let
$$\Lpn=\{\mu\in  \h^*_n\,|\,\mu=w\cdot \phi,\, \exists w\in W_n \,\,\,\text {and}\,\,\, \mu(\alpha^\vee)\in \Z_+\,\,\, \text{for all $\alpha\in Y'_n$}\}$$ and let $\Opn:={\mc{O}^{\la}_{n,\phi}}$ denote the full subcategory of the category of $\G_n$-modules  such that every module $M$ in $\mc O'_{n}$ is a semisimple
${\h}_n$-module with finite-dimensional weight spaces $M_\gamma$, $\gamma\in \h^*_n$, satisfying the following conditions:
   \begin{itemize}
\item[(i)]  ${M}$  over ${\mf{l}'_n}$ decomposes into a direct sum of
$L({\mf{l}'_n},\mu)$ for some $\mu\in\h^*_n$ satisfying $\mu(\alpha^\vee)\in \Z_+,\,\,\, \text{for all $\alpha\in Y'_n$}$;
\item[(ii)] every irreducible subquotient is of the form $L'(\mu)$ for some $\mu\in \Lpn$.
  \end{itemize}
  Note that $\Opn$ is a block of a parabolic BGG category such that $\Delta'_n(\mu)$, $L'_n(\mu)\in \Opn$, for all $\mu\in \Lpn$. For $n\in \N$,  every module $M\in \Opn$ has a composition series, and  $\Opn$ has enough  projectives and injectives. For $1\le k <n\le \infty $, as before, $\Lpk$ is regarded as a subset of $\Lpn$ by setting $\mu(E_j)=a+j$ for all $j> k$.

For $n_0\le k <n\le \infty$, we define a functor $\trp\,:\, \Opn\longrightarrow \Opk$ by
$$\trp(M):=\bigoplus_{\mu\in \mf h^*_n,\, \mu(E_r)=a+r, \forall r> k} M_\mu$$ sending each homomorphism to its restriction. The exact functor $\trp$ sending each module to a module in $\Opk$ will be clarified by \propref{lem:trunc'} below.

\begin{prop}\label{lem:trunc'}
For $n_0\le k<n\le \infty$, $\mu \in  \Lpn$ and $Z=L'$ or $\Delta'$, we have
\begin{equation*}
\begin{aligned}\mf{tr'}_k^n\big{(}Z_n(\mu)\big{)} =
\begin{cases}
Z_k (\mu),&\quad\text{if }
\mu \in  \Lpk ;\\
0,&\quad\text{otherwise}
\end{cases} \end{aligned}
\end{equation*}
and
\begin{equation*}
\mf{tr'}_k^n\big{(}P'_n(\mu)\big{)} =P'_k(\mu),\qquad\text{if }
\mu \in  \Lpk.
\end{equation*}
\end{prop}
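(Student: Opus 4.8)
The plan is to follow, mutatis mutandis, the templates of \lemref{lem:trunc} and \propref{Oproj-finite}, transporting each step to the primed (dual) setting, and to reduce everything to the case $n\in\N$; the case $n=\infty$ will then follow by the direct-limit and projective-cover-uniqueness argument used verbatim in \thmref{Oproj} and \corref{Thm::ProjToProj}. So for the bulk of the argument I would fix $n\in\N$ and prove the three assertions — on $\Delta'$, on $L'$, and on $P'$ — in that order.

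First I would settle $Z=\Delta'$ by a weight-space computation. By the PBW theorem $\Delta'_n(\mu)\cong U(\mf{u}'_{n-})\otimes L(\mf{l}'_n,\mu)$ as $\mf{l}'_n$-modules, so every weight has the form $\gamma=\mu-\eta$ with $\eta$ a nonnegative integral combination of positive roots lying in $\mf{u}'_n$; the Levi contribution does not disturb the coordinates $E_r$ for $r>k$, because $k\ge n_0\ge k'_\la$ together with $\la(E_r)=0$ for $r\ge k'_\la$ forces $\beta_r\notin Y'_n$, i.e. the simple roots of $\mf{l}'_n$ are supported away from the tail. The functor $\trp$ retains exactly the weight spaces with $\gamma(E_r)=a+r$ for all $r>k$. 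The crux is a dominance estimate $\mu(E_r)\ge a+r$ for $k<r\le n$ and all $\mu\in\Lpn$ — the analogue of \lemref{le} — which is precisely what the inequalities defining $n_0$ (together with \lemref{O=O}) are arranged to guarantee. Granting it, subtracting a positive root of $\mf{u}'_n$ can only push $\gamma(E_r)$ \emph{above} the generic profile $a+r$ in the tail, so $\gamma$ survives if and only if $\mu$ itself survives (forcing $\mu\in\Lpk$) and $\eta$ uses only root vectors of $\mf{u}'_{k-}$. Hence the surviving weight spaces are exactly those of $U(\mf{u}'_{k-})\otimes L(\mf{l}'_k,\mu)$, and since the root vectors of $\G_k\subset\G_n$ satisfy $\beta(E_r)=0$ for $r>k$, this identification is $\G_k$-equivariant, giving $\trp(\Delta'_n(\mu))\cong\Delta'_k(\mu)$ when $\mu\in\Lpk$ and $0$ otherwise. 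The case $Z=L'$ then follows from the exactness of $\trp$ applied to $\Delta'_n(\mu)\twoheadrightarrow L'_n(\mu)$: the truncation of the radical is the radical of the truncation, and matching highest weights shows the surviving quotient is $L'_k(\mu)$ (or $0$).

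For the projective statement I would argue exactly as in \propref{Oproj-finite}. Applying $\trp$ along the $\Delta'$-flag of $P'_n(\mu)$ and using the vanishing $\trp(\Delta'_n(\gamma))=0$ for $\gamma\notin\Lpk$ just established, together with BGG reciprocity in $\Opn$, shows that $\trp(P'_n(\mu))$ carries a $\Delta'_k$-flag with the correct multiplicities. The primed analogue of \thmref{Thm::Equivlanece} (the equivalence, induced by $\trp$, of the categories of modules with finite $\Delta'$-flags) upgrades the vanishing $\Ext^1_{\Opk}(P'_k(\mu),N)=0$ to $\Ext^1_{\Opn}$-vanishing, so $\trp(P'_n(\mu))$ is projective with simple top $L'_k(\mu)$ and is therefore isomorphic to $P'_k(\mu)$.

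The hard part will be the weight-combinatorial core for types $\mf{b},\mf{c},\mf{d}$. For type $\mf{a}$ the monotonicity is immediate: the only positive roots are $\epsilon_i-\epsilon_j$, and a maximal-tail-index argument shows at once that subtracting any such root that touches an index $>k$ strictly raises the corresponding coordinate above $a+r$. In types $\mf{b},\mf{c},\mf{d}$, however, the extra positive roots $\epsilon_i+\epsilon_j$, $\epsilon_i$, and $2\epsilon_i$ have \emph{positive} tail coordinates, so a naive maximal-index or summation argument no longer rules out cancellation that would create a spurious surviving weight. Overcoming this is exactly where the strictly increasing generic profile $a+r$ and the defining inequalities for $n_0$ must be invoked, to guarantee that no weight with the generic tail beyond $k$ can be produced from $\mu$ by subtracting a root touching a tail index $>k$. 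This is the one genuinely new estimate; once it is in place, every remaining step is a faithful transcription of the unprimed arguments of \secref{Sect::3} and \secref{sect::EmbedingofRings}.
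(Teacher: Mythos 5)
Your proposal is correct and follows essentially the same route as the paper: the paper likewise reduces to $n\in\N$, uses the choice of $\phi$ together with the Weyl-group argument of \lemref{O=O} to show $w\cdot\phi(E_n)\ge\phi(E_n)$ for all $w\in W_n$ --- hence every weight $\mu$ with $M_\mu\neq 0$ for $M\in\Opn$ satisfies $\mu(E_n)\ge a+n$, which is exactly your dominance estimate --- and then concludes that ${\mf{tr}'}^n_{n-1}$ is ``in the same situation'' as $\mf{tr}^n_{n-1}$, so that \lemref{lem:trunc} and \propref{Oproj-finite} transfer verbatim, the general case following by composing one-step truncations and the case $n=\infty$ as in \cite[Corollary 6.8]{CW12}. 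The one place your framing drifts from the paper's conventions is the sign of the extra roots in types $\mf{b},\mf{c},\mf{d}$: here the positive roots are $-\epsilon_i-\epsilon_j$, $-\epsilon_i$, $-2\epsilon_i$, all non-positive on $E_n$, so the potential ``cancellation'' you worry about comes from the sign changes in $W_n$ acting on $\phi$ (i.e.\ from which $w\cdot\phi$ lie in $\Lpn$) rather than from the root strings --- and that is precisely what the inequalities defining $\phi$ and $n_0$ are set up to exclude.
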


\begin{proof}
First, we assume $n\in\N$. By our choice of $\phi$ and the similar argruments in the proof of \lemref{O=O}, we have that $w\cdot\phi(E_n)\ge \phi(E_n)$ for all $w\in W_n$. Therefore $w\cdot\phi (E_n)\ge \phi (E_n)$ for all $w\in W_n$ and hence $M_\mu\not=0$ implies $\mu(E_n)\ge a+n$ for all $M\in \mc O'_{n,\phi}$ and $\mu\in \h^*_n$.
Therefore, the functor ${\mf{tr}'}^n_{n-1}$ has the same situation as the functor $\mf{tr}^n_{n-1}$ defined in \eqref{tr}. So the conclusion of the proposition holds for ${\mf{tr}'}^n_{n-1}$ and hence it holds for
${\mf{tr}'_k}^n:={\mf{tr}'}^n_{n-1}\circ{\mf{tr}'}^{n-1}_{n-2}\cdots \circ{\mf{tr}'}^{k+1}_k$ for $n_0\le k<n<\infty$.  For $n=\infty$, the proof is similar to \cite[Corollary 6.8]{CW12}.
\end{proof}

For $n_0\le k<n\le \infty$, let $\mc{O}^{\Delta'}_{n,\phi}$ denote
 the full subcategory of $\mc{O}'_{n,\phi}$ consisting of all $\mf g_n$-modules equipped with finite Verma flags and let $\mc{O}^{\Delta'}_{n,k,\phi}$ denote the full subcategory of  $\mc O^{\Delta'}_{n,\phi}$ of all $\mf g_n$-modules $M$ with finite filtration of $\mf g_n$-modules
\begin{align*}
0={M}_{r}\subset {M}_{r -1} \subset\cdots \subset M_0 =M,
\end{align*} satisfying
$M_{i}/M_{i+1} \cong \Delta'_n(\gamma^i)$ and $\gamma\in \Lpk$, for $0\leq i\leq r-1$.
The following proposition is an analog of \thmref{Thm::Equivlanece}.
\begin{prop} For $n_0\le k<n\le \infty$, the truncation functor
 $$ \mf{tr'}_{k}^n :  \mc O^{\Delta'}_{n, k,\phi}\longrightarrow  \mc O^{\Delta'}_{k,\phi}$$ is an equivalence.
\end{prop}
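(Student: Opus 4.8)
The plan is to mirror the proof of \thmref{Thm::Equivlanece} line by line, with \propref{lem:trunc'} playing the role that \lemref{lem:trunc} played there. Two things must be shown: that $\mf{tr'}_k^n$ is fully faithful on $\mc O^{\Delta'}_{n,k,\phi}$, and that it is essentially surjective onto $\mc O^{\Delta'}_{k,\phi}$. Throughout, exactness of $\mf{tr'}_k^n$ and the fact that it lands in $\Opk$ are already guaranteed by \propref{lem:trunc'}.

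For full faithfulness I would first record the primed analogues of \propref{CoHomologiesMatch} and \propref{stExt}. The key observation, exactly as in the proof of \propref{CoHomologiesMatch}(i), is that for any $M\in\Opn$ the truncation of the (restricted, when $n=\infty$) complex computing the $\mf u'_n$-cohomology of $M$ is precisely the complex computing the $\mf u'_k$-cohomology of $\mf{tr'}_k^n(M)$; combined with \propref{lem:trunc'} this yields $\mf{tr'}_k^n\big(H^i(\mf u'_n,Z_n(\ga))\big)=H^i(\mf u'_k,Z_k(\ga))$ for $Z=L',\Delta'$ when $\ga\in\Lpk$, and vanishing otherwise. Feeding this into the primed version of \propref{Prop::relative ext} gives that $\mf{tr'}_k^n$ is an isomorphism on $\Hom_{\Opn}(\Delta'_n(\mu),N)$ and on $\Ext^1_{\Opn}(\Delta'_n(\mu),N)$ for $\mu\in\Lpk$ and $N\in\Opn$. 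Then, running the same induction on the length of the Verma flag as in \propref{stExt}, and invoking the primed form of \lemref{lem::ShortInduceLong} together with the Five and Four Lemmas, I obtain that $\mf{tr'}_k^n$ is an isomorphism on $\Hom$ and a monomorphism on $\Ext^1$ for all $M\in\mc O^{\Delta'}_{n,k,\phi}$; in particular it is fully faithful.

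For essential surjectivity I would induct on the length of the Verma flag of a module $M\in\mc O^{\Delta'}_{k,\phi}$. When the flag has length one, $M\cong\Delta'_k(\mu)\cong\mf{tr'}_k^n(\Delta'_n(\mu))$ by \propref{lem:trunc'}. For the inductive step, take the defining short exact sequence $0\to M_1\to M\to M/M_1\to 0$ with $M/M_1\cong\Delta'_k(\mu)$ and $M_1$ of smaller flag length; by induction there are $N_1,N_2\in\mc O^{\Delta'}_{n,k,\phi}$ with $\mf{tr'}_k^n(N_1)\cong M_1$ and $\mf{tr'}_k^n(N_2)\cong M/M_1$. The extension class of $M$ lives in $\Ext^1_{\Opk}(\Delta'_k(\mu),M_1)$, and since the top quotient is a single Verma module, the primed \propref{CoHomologiesMatch}(ii) identifies this group with $\Ext^1_{\Opn}(\Delta'_n(\mu),N_1)$ via $\mf{tr'}_k^n$. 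Lifting the class produces an extension $N\in\mc O^{\Delta'}_{n,k,\phi}$ of $N_2$ by $N_1$, and exactness of the truncation functor forces $\mf{tr'}_k^n(N)\cong M$.

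The main obstacle is precisely the surjectivity, rather than mere injectivity, of $\mf{tr'}_k^n$ on the relevant $\Ext^1$-group: the monomorphism coming from the primed analogue of \propref{stExt} is not enough to lift the extension class that glues the top of the Verma flag to the rest, and one genuinely needs the full isomorphism supplied by the primed \propref{CoHomologiesMatch}(ii). Establishing that isomorphism is where the real work sits, but it reduces, through the cohomology-complex observation above, to \propref{lem:trunc'}; once it is in place the remainder is a routine repetition of the proof of \thmref{Thm::Equivlanece}.
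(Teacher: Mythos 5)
Your proposal is correct and follows exactly the route the paper intends: the paper states this proposition without proof as the analogue of \thmref{Thm::Equivlanece}, and your argument is precisely that omitted proof, transplanting \propref{Prop::relative ext}, \propref{CoHomologiesMatch} and \propref{stExt} to the primed setting via the weight observation in \propref{lem:trunc'} (that $\mu(E_r)\ge a+r$ plays the role of $\mu(E_r)\ge 0$). You also correctly isolate the one genuinely load-bearing step, namely the surjectivity of $\mf{tr'}_k^n$ on $\Ext^1_{\Opn}(\Delta'_n(\mu),N_1)$ needed to lift the extension class in the inductive step of essential surjectivity.
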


For $n\in \Ninf$ and $\ga\in  \Lpn$, let $P'_n(\ga)$ be the projective cover of $L'_n(\gamma)$ in $\Opn$ if it exists. We know $P'_n(\ga)$ exists for $n\in\N$.
The following proposition is the analogue of \thmref{Oproj}, \propref{Oproj-finite} and \corref{Thm::ProjToProj}.

\begin{prop} For $n\in \Ninf$ and $\mu \in  \Lpn$, there is a projective cover $P'_n(\mu)$ of $L_n(\mu)$ in $\mc O'_n$. Moreover, we have
$$
\mf{tr'}_{k}^{n}(P'_{n}(\mu))\cong P'_k(\mu), \quad \text {for $n_0\le k< n \le \infty$ and $\mu\in \Lpk$}.
$$
\end{prop}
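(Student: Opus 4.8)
The plan is to transcribe the proofs of \propref{Oproj-finite}, \thmref{Oproj} and \corref{Thm::ProjToProj} into the present setting, using the dual-category analogues of \thmref{Thm::Equivlanece}, \propref{stExt} and \propref{Don} (which hold by the same arguments, as announced at the start of this section) together with \propref{lem:trunc'}. For $n\in\N$ the existence of $P'_n(\mu)$ is already available, since $\Opn$ is a block of a finite-rank parabolic BGG category and hence has enough projectives; in that case $P'_n(\mu)$ carries a finite Verma flag, and the identity $\mf{tr'}_k^n(P'_n(\mu))\cong P'_k(\mu)$ for finite $k<n$ is the last assertion of \propref{lem:trunc'}. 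Thus the only genuinely new point is the case $n=\infty$.

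First I would fix $\mu\in\Lambda'_{\infty,\phi}$ and note that, by the weight conditions defining $\Lambda'_{\infty,\phi}$ together with the fact that the Weyl group acts through transformations of finite support, $\mu(E_j)=a+j$ for all $j\gg 0$; hence $\mu\in\Lpk$ for some $k\ge n_0$. Invoking the equivalence $\mf{tr'}_k^\infty:\mc O^{\Delta'}_{\infty,k,\phi}\longrightarrow\mc O^{\Delta'}_{k,\phi}$ (the analogue of \thmref{Thm::Equivlanece} recorded above), I obtain a module $M\in\mc O^{\Delta'}_{\infty,k,\phi}$, with a finite Verma flag whose sections are indexed by $\Lpk$, such that $\mf{tr'}_k^\infty(M)\cong P'_k(\mu)$. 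Since $P'_k(\mu)$ is projective, $\mathrm{Ext}^1_{\mc O'_k}(P'_k(\mu),\mf{tr'}_k^\infty(N))=0$ for every $N\in\mc O'_\infty$; the dual analogue of \propref{stExt}(ii) then gives $\mathrm{Ext}^1_{\mc O'_\infty}(M,N)=0$ for all such $N$, so $M$ is projective in $\mc O'_\infty$.

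Next I would show $\mf{tr'}_{k'}^\infty(M)\cong P'_{k'}(\mu)$ for every $k\le k'<\infty$, arguing as in \thmref{Oproj}: from the factorization $\mf{tr'}_k^\infty=\mf{tr'}_k^{k'}\circ\mf{tr'}_{k'}^\infty$, the finite-rank identity $\mf{tr'}_k^{k'}(P'_{k'}(\mu))\cong P'_k(\mu)$ of \propref{lem:trunc'}, and the fact that $\mf{tr'}_k^{k'}$ is an equivalence on the corresponding $\Delta'$-flag subcategories, the truncations of $M$ at all intermediate ranks are forced to be the associated finite-rank projective covers. There is an epimorphism $M\twoheadrightarrow L'(\mu)$, and to see that $M$ is a projective cover I would argue by contradiction exactly as in \thmref{Oproj}: if the head $M/R$ (with $R$ the radical of $M$) were a direct sum of more than one irreducible, then for $k'\gg 0$ the module $P'_{k'}(\mu)\cong\mf{tr'}_{k'}^\infty(M)$ would, by \propref{lem:trunc'}, acquire a quotient that is a direct sum of more than one irreducible, contradicting that $P'_{k'}(\mu)$ has simple top. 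Hence $M$ is the projective cover $P'(\mu):=P'_\infty(\mu)$, and by construction $\mf{tr'}_k^\infty(P'(\mu))\cong P'_k(\mu)$, completing the remaining case of the truncation identity.

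The step I expect to be the main obstacle is the projectivity of $M$ in the infinite-rank category: this requires knowing that $M$ genuinely lies in $\mc O^{\Delta'}_{\infty,k,\phi}$, that is, that it has a finite Verma flag all of whose sections are $\Delta'_\infty(\gamma)$ with $\gamma\in\Lpk$, so that the Ext-isomorphism machinery of the dual analogues of \propref{stExt} and \propref{Don} applies and the vanishing of $\mathrm{Ext}^1$ against all of $\mc O'_\infty$ can be verified after truncating to finite rank. Once projectivity and the finite Verma flag are in hand, the identification of the simple top uses only \propref{lem:trunc'}, and the remainder is a formal transcription of Section~\ref{Sect::3}.
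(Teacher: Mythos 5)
Your proposal is correct and follows essentially the route the paper intends: Section~\ref{Set::final} explicitly omits the proof because it is a transcription of \propref{Oproj-finite}, \thmref{Oproj} and \corref{Thm::ProjToProj} via the primed analogues of \thmref{Thm::Equivlanece}, \propref{stExt} and \propref{Don}, which is exactly what you carry out. Your identification of the only genuinely new content (the case $n=\infty$, reduced to finite rank through the equivalence on $\Delta'$-flag subcategories and the simple-top argument via \propref{lem:trunc'}) matches the argument of \thmref{Oproj}.
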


For $n > n_0$ as before, we set $$P'_{n,\phi}: = \bigoplus_{\mu \in  \Lpn} P'_n(\mu)\qquad \text{and}\qquad L'_{n,\phi}: = \bigoplus_{\mu \in  \Lpn} L'_n(\mu).$$
For $n\in \Ninf$ and  $\gamma \in \Lpn$, let $e'_{n,\gamma} \in {\rm End}_{\G_n}(P'_{n, \phi})$ be the natural projection onto $P'_n(\gamma)$, and let $f'_{n,\gamma}$ be the identity map of $L'_n(\gamma)$.
For $n\in \N$, let
$$R'_{n,\phi}:={\rm End}_{\G_n}(P'_{n,\phi})\quad\text{and}\quad  E'_{n,\phi}:={\rm Ext}^\bullet_{\mc{O}'_n}(L'_{n,\phi}, L'_{n,\phi}).$$
We know $R'_{n,\phi}$ and $E'_{n,\phi}$ are Koszul algebras by \cite[Proposition 3.2]{Ba99} and \cite[Theorem 2.10.2]{BGS96}.
For $n=\infty$, let
 \begin{align*}
 R'_{\infty,\phi}:=\bigoplus_{\mu, \gamma\in   \Lambda'_{\infty,\phi}}e'_{\infty,\mu}{\rm End}_{\G_\infty}(P'_{\infty,\phi})e'_{\infty,\gamma}.
  \end{align*}

The following two propositions are the analogs of the \propref{EndP=}, \propref{gr-R=eRe} and \lemref{ExtLonto}.

\begin{prop}\label{gr-R=eRe'}
For $n_0\le k<n\le \infty$, we have an isomorphism of graded algebras
 \begin{align*}{\psi'_{k}}^{n}: \bigoplus_{\mu, \gamma\in  \Lpk}e'_{\mu}R'_{n,\phi}e'_{\gamma}
\longrightarrow R'_{k,\phi}
  \end{align*}
 induced from the truncation functor $\trp$.
 Moreover, ${\psi'_{k}}^{n}$ is a graded isomorphism for $n\in \N$.
\end{prop}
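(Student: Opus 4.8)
The plan is to follow the same two-step strategy used for \propref{EndP=} and \propref{gr-R=eRe}: first produce the underlying isomorphism of ungraded algebras, and then promote it to a graded isomorphism by invoking the uniqueness of the Koszul grading. Throughout I identify $\bigoplus_{\mu,\gamma\in\Lpk}e'_\mu R'_{n,\phi}e'_\gamma$ with ${\rm End}_{\G_n}\big(\bigoplus_{\mu\in\Lpk}P'_n(\mu)\big)$, the corner of $R'_{n,\phi}$ cut out by the degree-zero idempotents $e'_\mu$ attached to the weights $\mu\in\Lpk\subseteq\Lpn$.

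For the ungraded isomorphism, I would first record that the projective-preservation statement established above gives $\trp\big(P'_n(\mu)\big)\cong P'_k(\mu)$ for every $\mu\in\Lpk$, whence $\trp\big(\bigoplus_{\mu\in\Lpk}P'_n(\mu)\big)\cong P'_{k,\phi}$. Since $\trp$ is a functor, applying it to endomorphisms defines an algebra homomorphism
\[
{\psi'_k}^n:{\rm End}_{\G_n}\Big(\bigoplus_{\mu\in\Lpk}P'_n(\mu)\Big)\longrightarrow {\rm End}_{\G_k}(P'_{k,\phi})=R'_{k,\phi}.
\]
To see it is bijective I would argue exactly as in \propref{EndP=}, using that the projective covers $P'_n(\mu)$ carry finite Verma flags and therefore lie in $\mc O^{\Delta'}_{n,k,\phi}$, together with the analog of \propref{Don} which makes $\trp$ fully faithful on $\mc O^{\Delta'}_{n,k,\phi}$; this full faithfulness is precisely the content of the equivalence $\mc O^{\Delta'}_{n,k,\phi}\simeq\mc O^{\Delta'}_{k,\phi}$ established in the proposition preceding this one.

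For the graded statement with $n\in\N$, I would fix the Koszul grading on the Koszul algebra $R'_{n,\phi}$. Because the $e'_\mu$ are idempotents of degree zero, the corner $\bigoplus_{\mu,\gamma\in\Lpk}e'_\mu R'_{n,\phi}e'_\gamma$ is a graded subalgebra, and by \cite[Proposition 2.2.1]{BGS96} both it and its opposite are Koszul. By the uniqueness of Koszul gradings \cite[Corollary 2.5.2]{BGS96} it then suffices to check that the grading it inherits from $R'_{n,\phi}$ is itself a Koszul grading, exactly as in \propref{gr-R=eRe}. The decisive input is the analog of \lemref{ExtLonto}, namely that $\trp$ induces an isomorphism $\Ext^1_{\Opn}(L'_n(\mu),L'_n(\gamma))\cong\Ext^1_{\Opk}(L'_k(\mu),L'_k(\gamma))$ for $\mu,\gamma\in\Lpk$; this rests on the analogs of \lemref{dimExt-L-L} and \lemref{Ext1mono}. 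Feeding the resulting isomorphism through the Schur-functor description (the analog of \propref{TrSchur}) forces the graded components to match, again as in \propref{gr-R=eRe}.

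The hard part will be the analog of \lemref{ExtLonto}, and in particular the dimension inequality analogous to \lemref{dimExt-L-L}: one needs each block $\Opn$ to be a highest weight category equipped with a duality and a Kazhdan--Lusztig theory, so that \cite[Corollary 3.9]{CPS92} applies and $\Ext$ between irreducibles factors through $\Ext$ out of Verma modules. For $n\in\N$ this apparatus is available because $\Opn$ is a genuine block of a finite-rank parabolic BGG category, for which Backelin's Koszulity and the standard contravariant duality hold; the point to verify carefully is that the length function attached to $\la$ and the poset embedding $\Lpk\hookrightarrow\Lpn$ are compatible with $\trp$ in the same manner as in the earlier sections. Once the $\Ext^1$-monomorphism and the dimension inequality are in place, they combine to the required $\Ext^1$-isomorphism and the remaining argument is purely formal.
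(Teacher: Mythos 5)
Your proposal is correct and follows exactly the route the paper intends: the paper states this proposition without proof, declaring that the results of this section are proved by the same arguments as Sections~\ref{Sect::3}--\ref{sect::EmbedingofRings}, and your reconstruction --- the ungraded isomorphism via preservation of projective covers, the fact that $P'_n(\mu)$ for $\mu\in\Lpk$ lies in $\mc O^{\Delta'}_{n,k,\phi}$, and full faithfulness of $\trp$ as in \propref{EndP=}, followed by uniqueness of the Koszul grading together with the analogs of \lemref{ExtLonto}, \lemref{dimExt-L-L}, \lemref{Ext1mono} and \propref{TrSchur} as in \propref{gr-R=eRe} --- is precisely that argument. You also correctly isolate the only point requiring care, namely that each block $\Opn$ for $n\in\N$ is a highest weight category equipped with a duality and a Kazhdan--Lusztig theory so that \cite[Corollary 3.9]{CPS92} applies.
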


\begin{prop}\label{ExtL'onto}
For $n_0\le k<n< \infty$, we have an epimorphism of graded algebras
 \begin{align*}{\eta'_k}^{n}: \bigoplus_{\mu, \gamma\in  \Lpk}f'_{\mu}E'_{n,\phi}f'_{\gamma}\longrightarrow E'_{k,\phi}
  \end{align*}
 induced from the truncation functor $\trp$.
\end{prop}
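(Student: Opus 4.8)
The plan is to transcribe the proof of \lemref{ExtLonto} to the dual category $\mc{O}'$. First I would record that the target $E'_{k,\phi}$ is a Koszul algebra, which is already noted above (via \cite[Proposition 3.2]{Ba99} and \cite[Theorem 2.10.2]{BGS96}). Koszulity forces $E'_{k,\phi}$ to be generated over its semisimple degree-zero part $\bigoplus_{\mu\in\Lpk}\C f'_\mu$ by its degree-one part. Since ${\eta'_k}^n$ fixes each idempotent $f'_\mu$ (for $\mu\in\Lpk$), its image already contains the degree-zero part, so it suffices to prove that ${\eta'_k}^n$ is surjective---indeed an isomorphism---in degree one; that is, that
$$\trp:{\rm Ext}^1_{\mc{O}'_n}(L'_n(\mu),L'_n(\gamma))\longrightarrow {\rm Ext}^1_{\mc{O}'_k}(L'_k(\mu),L'_k(\gamma))$$
is an isomorphism for all $\mu,\gamma\in\Lpk$.

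To produce this degree-one isomorphism I would establish the two inputs that drive \lemref{ExtLonto}, now for $\mc{O}'$. The injectivity is the analogue of \lemref{Ext1mono}: given a non-split extension $0\to L'_n(\gamma)\to M\to L'_n(\mu)\to 0$, I would use that $\trp$ commutes with the duality functor on $\mc{O}'_n$ and that dualizing preserves non-splitness to reduce to the case $\mu\nless\gamma$ in the dominance order; then $M$ is a highest weight module of highest weight $\mu$, its highest weight space is not truncated away by \propref{lem:trunc'}, so $\trp M$ again has simple top $L'_k(\mu)$ and the truncated sequence remains non-split, giving injectivity. The reverse dimension inequality is the analogue of \lemref{dimExt-L-L}: since each block $\mc{O}'_{n,\phi}$ is a block of a parabolic BGG category, it is an Artinian highest weight category with a Kazhdan--Lusztig theory, so \cite[Corollary 3.9]{CPS92} expresses ${\rm dimExt}^j_{\mc{O}'_n}(L'_n(\mu),L'_n(\gamma))$ as a sum over $\zeta\in\Lpn$ of products ${\rm dimExt}^i_{\mc{O}'_n}(\Delta'_n(\zeta),L'_n(\gamma))\cdot{\rm dimExt}^l_{\mc{O}'_n}(\Delta'_n(\zeta),L'_n(\mu))$ with $i+l=j$. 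Restricting the sum to the smaller set $\zeta\in\Lpk$ and invoking the dual analogue of \propref{Don} (proved exactly as in \secref{Sect::3}), which identifies these $\Ext$-spaces with their counterparts over $\mc{O}'_k$, yields ${\rm dimExt}^j_{\mc{O}'_n}(L'_n(\mu),L'_n(\gamma))\ge {\rm dimExt}^j_{\mc{O}'_k}(L'_k(\mu),L'_k(\gamma))$. Taking $j=1$ and combining with injectivity forces the degree-one isomorphism, and hence the surjectivity of ${\eta'_k}^n$.

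The main obstacle will be supplying the structural facts about $\mc{O}'$ that these two inputs silently use, since in the body they were verified only for $\mc{O}$. Concretely, I must confirm that $\mc{O}'_{n,\phi}$ carries a duality functor fixing the irreducibles $L'_n(\mu)$ and that it is an Artinian highest weight category possessing a Kazhdan--Lusztig theory with respect to an appropriate length function on $\Lpn$; these are precisely what make the truncation argument for injectivity and the Cline--Parshall--Scott dimension formula applicable. The constraints imposed on $\phi$ (so that $Y'_n$ is a genuine parabolic datum and $w\cdot\phi(E_n)\ge\phi(E_n)$ for all $w\in W_n$), together with \propref{lem:trunc'}, are designed to guarantee these facts, after which the argument is formal and identical to that of \lemref{ExtLonto}.
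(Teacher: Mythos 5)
Your proposal is correct and follows exactly the route the paper intends: Section \ref{Set::final} omits the proof precisely because it is the proof of \lemref{ExtLonto} transcribed to $\Opn$, namely Koszulity of $E'_{k,\phi}$ reduces the claim to surjectivity in degrees zero and one, and the degree-one isomorphism is obtained by combining the $\mc{O}'$-analogues of \lemref{Ext1mono} and \lemref{dimExt-L-L} (the latter via \cite[Corollary 3.9]{CPS92} and the dual version of \propref{Don}). Your explicit listing of the structural facts about $\Opn$ that must be supplied (duality, Artinian highest weight structure, Kazhdan--Lusztig theory) is exactly the content the paper leaves implicit.
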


The following is an analog of \thmref{Thm::RIsKoszul}.

\begin{prop}
 $R'_{\infty,\phi}$ is a Koszul algebra. Moreover, there is a graded isomorphism from $e'_{\infty, k}R'_{\infty,\phi}e'_{\infty, k}$ to $R'_{k,\phi}$ induced from $\mf{tr'}^\infty_k$ for each $k\ge n_0$.
\end{prop}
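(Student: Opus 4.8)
The plan is to transcribe, essentially verbatim, the argument proving \thmref{Thm::RIsKoszul}, now fed by the graded data of \propref{gr-R=eRe'} and the abstract direct-limit criterion \lemref{UKoszul}. Write $R:=R'_{\infty,\phi}$ and, for $k\ge n_0$, set $e_k:=e'_{\infty,k}=\sum_{\mu\in\Lpk}e'_{\infty,\mu}$. By construction $R$ is the locally unital algebra $\bigoplus_{\mu,\gamma\in\Lambda'_{\infty,\phi}}e'_{\infty,\mu}Re'_{\infty,\gamma}$ with $R_0=\bigoplus_{\mu\in\Lambda'_{\infty,\phi}}\C e'_{\infty,\mu}$, so it provides the locally unital input $A$ required by \lemref{UKoszul}, with idempotent set $I:=\{e'_{\infty,\mu}\,|\,\mu\in\Lambda'_{\infty,\phi}\}$.

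First I would fix the grading on $R$. Each $R'_{k,\phi}$ is Koszul (recorded just before \propref{gr-R=eRe'}); give it its Koszul grading and transport this grading to $e_kRe_k=\bigoplus_{\mu,\gamma\in\Lpk}e'_{\infty,\mu}Re'_{\infty,\gamma}$ along the algebra isomorphism ${\psi'_k}^\infty$ of \propref{gr-R=eRe'}. This makes each $e_kRe_k$ a locally unital Koszul algebra with $(e_kRe_k)_0={\rm span}\{e'_{\infty,\mu}\,|\,\mu\in\Lpk\}$ and renders ${\psi'_k}^\infty$ graded by fiat. Setting $I_k:=\{e'_{\infty,\mu}\,|\,\mu\in\Lpk\}$, the inclusions $\Lpk\subseteq\Lambda'_{n,\phi}$ for $n_0\le k\le n$ give an increasing chain of idempotent sets with $\bigcup_{k\ge n_0}I_k=I$, and $B^k:=e_kRe_k$ are the required Koszul subalgebras. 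The compatibility of their gradings is exactly the finite-rank graded isomorphism ${\psi'_k}^n$ ($n_0\le k<n<\infty$) of \propref{gr-R=eRe'}: it identifies $e_kRe_k$ with the graded subalgebra $\bigoplus_{\mu,\gamma\in\Lpk}e'_{n,\mu}R'_{n,\phi}e'_{n,\gamma}$ of $R'_{n,\phi}\cong e_nRe_n$, so the grading $e_kRe_k$ inherits from $e_nRe_n$ coincides with its own Koszul grading. Since $R=\bigcup_{k\ge n_0}e_kRe_k$, I may define the degree of $v\in R$ to be its degree in $e_kRe_k$ for $k\gg0$, and \lemref{UKoszul} then yields at once that $R=R'_{\infty,\phi}$ is Koszul with each $e_kRe_k$ a graded subalgebra. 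The asserted graded isomorphism $e'_{\infty,k}R'_{\infty,\phi}e'_{\infty,k}\cong R'_{k,\phi}$ is then ${\psi'_k}^\infty$, which is graded by the very choice of grading on $R$.

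The only genuinely new bookkeeping, and the sole place the dual setting departs cosmetically from \secref{sect::EmbedingofRings}, is to confirm that the family $\{\Lpk\}_{k\ge n_0}$ is nested and exhausts $\Lambda'_{\infty,\phi}$. This rests on the fixed embedding $\Lpk\hookrightarrow\Lambda'_{n,\phi}$ (via $\mu(E_j)=a+j$ for $j>k$) recorded in the setup and on \propref{lem:trunc'}, which guarantees that $\trp$ carries the distinguished objects of $\Opn$ to those of $\Opk$ and underlies the isomorphisms ${\psi'_k}^n$. Granting this, no further obstacle remains: the substantive homological content has already been absorbed into \propref{gr-R=eRe'} (graded compatibility) and \lemref{UKoszul} (the limit mechanism), so the argument is a faithful transcription of the proof of \thmref{Thm::RIsKoszul}.
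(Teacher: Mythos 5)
Your proposal is correct and coincides with the paper's intended argument: the paper omits the proof precisely because it is the verbatim analogue of the proof of \thmref{Thm::RIsKoszul}, namely transporting the Koszul grading of $R'_{k,\phi}$ to $e'_{\infty,k}R'_{\infty,\phi}e'_{\infty,k}$ via ${\psi'_k}^{\infty}$, using \propref{gr-R=eRe'} for compatibility of the gradings as $k$ varies, and invoking \lemref{UKoszul} on the exhaustive union. Your additional check that the sets $\Lpk$ are nested and exhaust $\Lambda'_{\infty,\phi}$ is the right piece of bookkeeping and is indeed supplied by the fixed embeddings and \propref{lem:trunc'}.
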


\begin{rem}
We assume that $\G^\xx = \G^\mf{b}$, $\G^\mf{c}$ or $\G^\mf{d}$.
For $n\in\N$, let $w_{n,0}$ denote the longest element in $W_n$.
Note that  $-w_{n,0}$ is the identity in $W_n$ for $\G_n=\G^{\mf{b}}_n,\, \G^{\mf{c}}_n$, or $\G_n=\G^{\mf{d}}_n$ such that $m+n$ is an even number (see, for example, \cite[Exercise 4.10]{BB}).
Applying \cite[Theorem 3.7]{Ba99} to our situations, there are graded isomorphisms between Koszul algebras
\begin{equation}\label{R-E}
R_{n,\la} \cong E'_{n,\phi}\quad\text{and}\quad
E_{n,\la}\cong R'_{n,\phi}
\end{equation}
for $n\in\N$. By \propref{gr-R=eRe} and \propref{gr-R=eRe'}, $R_{\infty,\la}$ and $R'_{\infty,\phi}$ are isomorphic to some direct limits of the systems $\{R_{n,\la}\}_{n\in\N}$ and $\{R'_{n,\phi}\}_{n\in\N}$, respectively. On the other hand,
$\{E_{n,\la}\}_{n\in\N}$ and $\{E'_{n,\phi}\}_{n\in\N}$ form inverse systems by \propref{ExtLonto} and \propref{ExtL'onto}. From \eqref{R-E},  we may expect that $R_{\infty,\la}$ and $R'_{\infty,\phi}$ are isomorphic to some subalgebras of the inverse limits of the systems $\{E'_{n,\phi}\}_{n\in\N}$ and $\{E_{n,\la}\}_{n\in\N}$, respectively.
\end{rem}

\bigskip
\frenchspacing

\end{document}